\documentclass[12pt]{amsart}

\usepackage{amstext,amssymb,amsmath,stmaryrd,amsthm}
\usepackage{geometry} % see geometry.pdf on how to lay out the page.  
\usepackage{color}
\usepackage{hyperref}
\usepackage{enumitem}
\makeindex

\numberwithin{equation}{section}

\newtheorem{theorem}{Theorem}[section]
\newtheorem{corollary}[theorem]{Corollary}
\newtheorem{proposition}[theorem]{Proposition}

\newtheorem{remark}[theorem]{Remark}

\newtheorem{lemma}[theorem]{Lemma}

\begin{document}

\title[Elephant-Reinforced Neural Networks]
{Elephant Reinforced Galves-L\"ocherbach Networks}

\author{ Ioannis Papageorgiou }

 \thanks{\textit{Address:}  Universidade Federal do ABC (UFABC) - CMCC, Avenida dos Estados, 5001 - Santo Andre - Sao Paulo, Brasil.
\\ \text{\  \   \      } 
\textit{Email:}  i.papageorgiou@ufabc.edu.br, papyannis@yahoo.com  }

\keywords{Galves-L\"ocherbach model, interacting spiking neurons, Elephant random walk, reinforcement, piecewise-deterministic Markov processes, non-explosion, Wasserstein contraction, replica mean field}
\subjclass[2020]{60J25, 60K35, 60J75, 92B20} 

%\homepage[]{Your web page}
%\thanks{}

% Collaboration name, if desired (requires use of superscriptaddress option in \documentclass). 
% \noaffiliation is required (may also be used with the \author command).
%\collaboration{}
%\noaffiliation

\begin{abstract}
We introduce an infinite-dimensional Galves--L\"ocherbach system with bounded
Elephant-type reinforced synaptic interactions. The reinforcement mechanism
modifies the probabilities of excitatory and inhibitory synaptic updates while
keeping their amplitudes uniformly bounded. We prove non-explosion by means of
a Lyapunov estimate and establish a conditional Wasserstein contraction for the
membrane-potential dynamics when the coupled systems share the same reinforcement
profile. Finally, we derive the corresponding replica mean-field equation under
the Poisson hypothesis.
\end{abstract}

\date{}
\maketitle

\section{Introduction}

Memory-dependent stochastic dynamics arise when the law of a future event is
modified by the accumulated history of the process. In neuronal systems, this
provides a natural abstract mechanism for representing synaptic adaptation:
previous excitatory and inhibitory transmissions may influence the probability
of future interactions. The purpose of this paper is to introduce complete-memory
reinforcement into an infinite-dimensional stochastic network of spiking
neurons.

Our reinforcement rule is inspired by the Elephant Random Walk, introduced by
Sch\"utz and Trimper \cite{SchutzTrimper2004}. In that model, a previous
increment is sampled from the whole trajectory and is either repeated or
reversed. This simple mechanism produces long-range dependence and several
distinct asymptotic regimes. Its relation with P\'olya-type urns was established
in \cite{BaurBertoin2016}, building on methods such as those of
\cite{ChauvinPouyanneSahnoun2011}. Martingale techniques and limit theorems were
developed in
\cite{Bercu2017,Laulin2022martingale,Coletti2017a,Coletti2017b,
ColettiPapageorgiou2019,Guevara2019}, while the superdiffusive regime was
analyzed further in
\cite{GuerinLaulinRaschel2023,GuerinLaulinRaschel2025}.

The Elephant Random Walk has generated numerous variants, including walks with
delays \cite{GutStadtmuellerDelays}, smooth memory decay \cite{Laulin2022},
stopping mechanisms \cite{Bercu2022stops}, multidimensional and tree-valued
models \cite{Qin2025,Mukherjee2025}, and systems displaying additional phase
transitions \cite{MaulikRoySadhukhan2025,Qin2025phase}. Estimation of the memory
parameter was considered in \cite{BercuLaulin2024}, and related reinforced
random walks were studied in
\cite{Bertenghi2021,BertenghiRosales2022,Qin2024}. Surveys and broader
treatments may be found in
\cite{GutStadtmuellerVariations,GutStadtmuellerReview,Laulin2022thesis}.
Recent Poincar\'e and exponential-convergence estimates for bounded
Elephant-type dynamics were obtained in
\cite{ColettiGrisiPapageorgiou2026}.

We transfer this complete-memory principle to the signs of synaptic
interactions. When neuron \(i\) spikes, neuron \(j\) receives an increment
\[
\xi_{ij}(N_t^j,S_t^j)\in\{+w_{ij},-v_{ij}\},
\]
where \(N_t^j\) records the number of reinforced updates received by neuron
\(j\), and \(S_t^j\) records their signed cumulative value. The amplitudes
\(w_{ij}\) and \(v_{ij}\) remain bounded, whereas the probabilities of the two
signs depend on the complete reinforcement history. Thus, unlike reinforcement
models in which the magnitude of the interaction grows, the present mechanism
changes only the distribution of the synaptic effect.

The neuronal component of the model belongs to the family of
piecewise-deterministic Markov processes introduced by Davis
\cite{Davis84,Davis93}. Between spikes, membrane potentials follow deterministic
flows; at spike times, the firing neuron is reset and the remaining coordinates
receive instantaneous synaptic inputs. Early stochastic models of biologically  neural networks, including
networks with both excitatory and inhibitory interactions, were studied by
Turova \cite{Turova1997Hourglass,Turova1997Stochastic} and contributed to the
development of the probabilistic framework for interacting spiking-neuron
systems. PDMP methods have been applied to
stochastic hybrid systems in biology and neuroscience in
\cite{C-D-M-R,PTW-10,ABGKZ,L17,Lo}. Functional inequalities for compact
degenerate pure-jump neuronal processes, including Poincar\'e-type and modified
log-Sobolev inequalities, were established in
\cite{HodaraPapageorgiou2019, HodaraPapageorgiou2022, Papageorgiou2020}.

Stochastic models of neuronal activity have a much longer history. Early
descriptions include the works of Stein, Tuckwell, Karpelevich, Malyshev, Rybko,
and Cottrell
\cite{Stein,Tuc,K-M-R,Cot92}. Network models accounting for synchronization,
oscillations, balanced excitation and inhibition, spontaneous activity, and
learning were developed in
\cite{Br-Ha99,Br2000,So-Vr96,Abe91,Am-Br,Am-Br97,A-E-V94,
Je-Tr-Wh95,Ma-La96}. These studies provide part of the broader motivation for
constructing analytically tractable infinite neuronal systems.

The specific neuronal framework considered here is the
Galves--L\"ocherbach model \cite{GalvesLocherbach2013}. In this model, every
neuron possesses a membrane potential and spikes with a state-dependent
intensity. A spike resets the firing coordinate and modifies other coordinates
through excitatory or inhibitory interactions. The model has been studied in
connection with phase transitions \cite{F-G-L}, metastability
\cite{L-M,romaro}, statistical estimation \cite{H-K-L,H-R-R},
hydrodynamic and mean-field limits
\cite{C-D-L-O,D-L,D-L-O,C17}, and infinite-component systems
\cite{H-L}.

In \cite{Papageorgiou2023}, infinite Galves--L\"ocherbach networks were studied
under assumptions extending beyond the classical Uniform Summability Principle.
Replica Mean Field limits for related systems with excitatory and inhibitory
activity were subsequently derived in \cite{Papageorgiou2025}, following the
RMF methodology introduced in
\cite{B-T,BaTa20}. The present paper adds a different feature: the conditional
distribution of a synaptic interaction is no longer fixed but evolves through
an Elephant-type memory mechanism.

To retain the Markov property, we enlarge the state space and consider
\[
Y_t=(X_t,N_t,S_t).
\]
The membrane-potential process \(X_t\) alone is history dependent, whereas the
lifted process is Markovian. The reinforcement variables change after synaptic
updates, but the jump amplitudes remain uniformly controlled. This boundedness
is essential for the infinite-dimensional estimates.

A complementary numerical analysis of the Elephant-reinforced neuronal model is carried out in \cite{ElephantSimulations}, including simulations of finite networks, parameter dependence, synchronization phenomena, and comparisons with the Replica Mean Field approximation. The present work focuses on the general analytical theory underlying these dynamics.

Our first result establishes non-explosion of the full endogenous
Elephant--Galves--L"ocherbach process. Under summability assumptions on the
firing-rate Lipschitz constants and on the maximal synaptic increments, a
weighted Lyapunov estimate controls the total firing intensity and the expected
number of spikes on every finite time interval. The argument is localized before
Dynkin's formula is applied, so that no prior assumption of global existence is
required.

We next derive exact first-moment identities for the reinforcement variables.
The evolution of the reinforcement counter is governed by the incoming firing
intensity, while the signed reinforcement variable satisfies a continuous-time
analogue of the classical Elephant random-walk drift relation. These identities
make explicit how the memory parameter enters the neuronal dynamics and also
show that the first-moment system is not closed in general, because of the
dependence between the reinforcement balance and the incoming network activity.

The next results concern conditional membrane-potential dynamics in prescribed
reinforcement environments. For a fixed reinforcement profile, we construct a
maximal coupling of two membrane-potential processes using common reinforced
signs at simultaneous spikes. Under suitable dissipativity and compatibility
conditions, this coupling yields exponential contraction in a weighted
$1$-Wasserstein distance. This is a quenched contraction result in a common
environment and does not imply contraction of the full endogenous process
$(X,N,S)$.

We then allow the two membrane-potential processes to evolve in different
prescribed reinforcement environments. By maximally coupling the corresponding
Elephant sign laws, we obtain a quantitative perturbation estimate in which the
Wasserstein distance is controlled by the initial discrepancy together with an
exponentially weighted measure of the difference between the two reinforcement
profiles. This provides a stability result with respect to perturbations of the
excitation--inhibition environment.

Finally, in a frozen reinforcement environment and under the Poisson
Hypothesis, we identify the conditional stationary Replica Mean Field identity
satisfied by the limiting membrane-potential transform. Incoming replica
interactions are replaced by independent Poisson inputs with self-consistent
rates. Since the reinforcement counters are nondecreasing, stationarity is
imposed only on the conditional membrane-potential dynamics. In the model on
$\mathbb R_+$, inhibitory jumps involving the positive-part map lead naturally
to a truncated exponential transform.

The remainder of the paper is organized as follows. We first define the bounded
Elephant--Galves--L"ocherbach model and describe the conditional drift induced
by the reinforcement mechanism. We then establish non-explosion of the full
endogenous process and derive exact moment identities for the reinforcement
variables. Next, we prove conditional Wasserstein contraction in a common
prescribed reinforcement environment and quantify the stability of the
membrane-potential dynamics under perturbations of that environment. The final
section derives the conditional stationary Replica Mean Field identity in a
frozen reinforcement environment under the Poisson Hypothesis.

\begin{remark} 
\label{rem:endogenous-conditional-distinction}

The results of Sections~\ref{sec:model}--\ref{subsec:reinforcement-moment-identities}
concern the full endogenous process
\begin{equation*}
Y_t=(X_t,N_t,S_t),
\end{equation*}
in which the reinforcement variables evolve jointly with the membrane
potentials.

By contrast, the Wasserstein results of
Sections~\ref{sec:wasserstein} and
\ref{sec:environment-stability} concern membrane-potential dynamics
conditioned on prescribed reinforcement environments. In particular, these
results do not imply contraction of the complete endogenous process
$(X_t,N_t,S_t)$.

Similarly, the Replica Mean Field analysis of
Section~\ref{sec:rmf-elephant} is performed in a frozen reinforcement
environment and under the Poisson Hypothesis. It therefore provides a
conditional stationary RMF identity for the membrane-potential dynamics,
rather than a convergence theorem for the full endogenous reinforced system.
\end{remark}

\section{The bounded Elephant--Galves--L\"ocherbach model}
\label{sec:model}

Let
$
Y_t=(X_t,N_t,S_t),
$
 where
$
X_t=(X_t^i)_{i\in\mathbb N}\in\mathbb R_+^{\mathbb N}
$
 is the membrane-potential configuration, while
\begin{equation*}
N_t=(N_t^i)_{i\in\mathbb N}\in\mathbb N_0^{\mathbb N},
\qquad
S_t=(S_t^i)_{i\in\mathbb N}\in\mathbb Z^{\mathbb N}
\end{equation*}
are the reinforcement variables. The variable $(N_t^i)$ counts the number of
reinforced synaptic updates received by neuron ($i$), and $(S_t^i)$ is the signed
sum of the corresponding Elephant variables.

We assume that the initial reinforcement variables satisfy
\begin{equation*}
N_0^i\in\mathbb N_0,
\qquad
S_0^i\in\mathbb Z,
\qquad
i\in\mathbb N,
\end{equation*}
together with the admissibility conditions
\begin{equation*}
|S_0^i|\leq N_0^i
\end{equation*}
and
\begin{equation*}
S_0^i\equiv N_0^i\pmod 2,
\qquad
i\in\mathbb N.
\end{equation*}
The particular choice
$N_0^i=S_0^i=0, 
i\in\mathbb N,
$ 
corresponds to the system with no reinforcement history before time zero. Whenever neuron $i$ receives a reinforced synaptic update, its reinforcement
variables change according to
\begin{equation*}
(N_t^i,S_t^i)
\longmapsto
(N_t^i+1,S_t^i+\varepsilon),
\qquad
\varepsilon\in\{-1,+1\}.
\end{equation*}

We now verify that the admissibility conditions are preserved by every
reinforced update. Suppose that, immediately before an update,
\begin{equation*}
|S_t^i|\leq N_t^i
\end{equation*}
and
\begin{equation*}
S_t^i\equiv N_t^i\pmod 2.
\end{equation*}
Then
\begin{align*}
|S_t^i+\varepsilon|
\leq
|S_t^i|+|\varepsilon|
\leq
N_t^i+1.
\end{align*}
Moreover,
\begin{align*}
(N_t^i+1)-(S_t^i+\varepsilon)
&=
N_t^i-S_t^i+1-\varepsilon.
\end{align*}
Since $N_t^i-S_t^i$ is even and
$1-\varepsilon\in\{0,2\},
$ 
the quantity
$(N_t^i+1)-(S_t^i+\varepsilon)
$ 
is also even. Hence the conditions
\begin{equation}\label{defN}
|S_t^i|\leq N_t^i
\text{
and} 
S_t^i\equiv N_t^i\pmod 2
\end{equation}
are preserved at every reinforced update. Therefore, if the initial state is
admissible, then for every time for which the process is defined as in (\ref{defN}).
   
 For every ordered pair of distinct neurons $(k,j\in\mathbb N)$, let
\begin{equation*}
w_{kj}\geq 0
\qquad\text{and}\qquad
v_{kj}\geq 0
\end{equation*}
denote, respectively, the excitatory amplitude and the magnitude of the
inhibitory amplitude of the synaptic interaction from neuron (k) to neuron
(j). Thus, when neuron (k) spikes, the synaptic increment received by neuron
(j) will take one of the two values
$
+w_{kj}
$
 or $
-v_{kj}.
$

For every presynaptic neuron (k), define its postsynaptic neighbourhood by
\begin{equation*}
\mathcal N_k
:=
\left\{
j\in\mathbb N\setminus\{k\}:
w_{kj}+v_{kj}>0
\right\}.
\end{equation*}
Thus, $(j\in\mathcal N_k)$
if and only if a spike of neuron ($k$) can produce a
nonzero synaptic effect on neuron ($j$).

  We assume throughout that
$\mathcal N_k$ is finite for every $k$.  Fix $p,q\in[0,1]$ with $p+q=1$.
For an admissible reinforcement state \((n,s)\), define the conditional law of
the Elephant sign \(\varepsilon_{ki}\in\{-1,+1\}\) by
\begin{equation*}
Q_i^{n,s}(+1)
:=
\mathbb P\bigl(\varepsilon_{ki}=+1\mid N^i=n_i,S^i=s_i\bigr)
=
\begin{cases}
\displaystyle
\frac12+\frac{(2p-1)s_i}{2n_i},&n_i\geq1,\\[2mm]
\displaystyle\frac12,&n_i=0,
\end{cases}
\end{equation*}
and
\begin{equation*}
Q_i^{n,s}(-1)
:=
\mathbb P\bigl(\varepsilon_{ki}=-1\mid N^i=n_i,S^i=s_i\bigr)
=
\begin{cases}
\displaystyle
\frac12-\frac{(2p-1)s_i}{2n_i},&n_i\geq1,\\[2mm]
\displaystyle\frac12,&n_i=0.
\end{cases}
\end{equation*}
The corresponding synaptic increment is
\begin{equation*}
\xi_{ki}
=
\begin{cases}
+w_{ki},&\varepsilon_{ki}=+1,\\
-v_{ki},&\varepsilon_{ki}=-1.
\end{cases}
\end{equation*}
The signs corresponding to distinct postsynaptic neurons are taken conditionally
independent.  This independence is not needed for the non-explosion estimate;
only their one-dimensional conditional laws and the uniform bound
\[
|\xi_{kj}|\leq c_{kj},
\qquad
c_{kj}:=\max\{w_{kj},v_{kj}\},
\]
will be used.

Let $\varepsilon_k=(\varepsilon_{kj})_{j\in\mathcal N_k}$ and define the
post-jump map
\[
\Delta_k^{\varepsilon_k}(x,n,s)
=
\left(
\Delta_k^{X,\varepsilon_k}(x),
\Delta_k^N(n),
\Delta_k^{S,\varepsilon_k}(s)
\right)
\]
coordinatewise by
\[
\bigl(\Delta_k^{X,\varepsilon_k}(x)\bigr)^j
=
\begin{cases}
0,&j=k,\\[1mm]
\bigl(x^j+\xi_{kj}(n_j,s_j,\varepsilon_{kj})\bigr)_+,
&j\in\mathcal N_k,\\[1mm]
x^j,&j\notin\mathcal N_k\cup\{k\},
\end{cases}
\]
\[
\bigl(\Delta_k^N(n)\bigr)^j
=
\begin{cases}
n_j+1,&j\in\mathcal N_k,\\
n_j,&j\notin\mathcal N_k,
\end{cases}
\]
and
\[
\bigl(\Delta_k^{S,\varepsilon_k}(s)\bigr)^j
=
\begin{cases}
s_j+\varepsilon_{kj},&j\in\mathcal N_k,\\
s_j,&j\notin\mathcal N_k.
\end{cases}
\]
Here and below $u_+:=\max\{u,0\}$.

Between spikes, the membrane potentials solve
\[
\frac{d}{dt}X_t^i=-a_i g(X_t^i),
\qquad i\in\mathbb N,
\]
while $N_t$ and $S_t$ remain constant.  Neuron $k$ spikes with intensity
$\phi_k(X_{t-}^k)$.  For every cylinder test function $f=f(x,n,s)$ that is
continuously differentiable in its membrane-potential variables, the generator
of the enlarged process is therefore
\begin{align}
\mathcal L f(x,n,s)
=&
-\sum_{i=1}^{\infty}
 a_i g(x^i)\,\partial_{x^i}f(x,n,s)
\nonumber\\
&+
\sum_{k=1}^{\infty}
\phi_k(x^k)
\left[
\mathbb E_{\varepsilon_k}^{n,s} f\bigl(\Delta_k^{\varepsilon_k}(x,n,s)\bigr)
-f(x,n,s)
\right].
\label{eq:elephant-generator}
\end{align}

\subsection{Conditional drift of the reinforced synaptic interactions}
\label{subsec:conditional-synaptic-drift}

The reinforcement mechanism acts on the conditional distribution of the sign of
each synaptic interaction.  We record here the resulting conditional drift and
its dependence on the Elephant parameter.

For an admissible reinforcement state $(n,s)$ and a neuron $j$, define
\[
r_j(n,s)
:=
\begin{cases}
\dfrac{s_j}{n_j},&n_j\geq1,\\
0,&n_j=0.
\end{cases}
\]
Then $|r_j(n,s)|\leq1$.  Recall that
\[
Q_j^{n,s}(+1)
=\frac12+\frac{2p-1}{2}r_j(n,s),
\]
and
\[
Q_j^{n,s}(-1)
=
\frac12-\frac{2p-1}{2}r_j(n,s).
\]

\begin{proposition} 
\label{prop:conditional-elephant-sign-drift}
Let $\varepsilon_{kj}\in\{-1,+1\}$ denote the reinforced sign generated when
neuron $k$ spikes and affects neuron $j\in\mathcal N_k$. Then
\[
\mathbb E\left[
\varepsilon_{kj}\mid X_{t-}=x,\ N_{t-}=n,\ S_{t-}=s
\right]
=\begin{cases} 
(2p-1)\dfrac{s_j}{n_j},& n_j\geq1, \\
 0, &n_j=0.
\end{cases}
\]
\end{proposition}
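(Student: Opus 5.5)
The plan is to compute the conditional expectation directly from the two-point conditional law $Q_j^{n,s}$ specified in the model. The first step is to record why conditioning on the full pre-jump state $(X_{t-},N_{t-},S_{t-})=(x,n,s)$ reduces to conditioning on $(N^j_{t-},S^j_{t-})=(n_j,s_j)$. By construction of the generator \eqref{eq:elephant-generator}, when neuron $k$ spikes at time $t$ the sign vector $\varepsilon_k$ is drawn from $\mathbb E_{\varepsilon_k}^{n,s}$, the product of the one-dimensional laws $Q_j^{n,s}$, $j\in\mathcal N_k$. In particular, the marginal law of $\varepsilon_{kj}$ given the pre-jump state is exactly $Q_j^{n,s}$. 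This law depends only on $(n_j,s_j)$ and not on $x$ or on the other coordinates. The conditional independence across postsynaptic neurons is not needed; only this marginal is used.

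The second step is a two-case computation. Since $\varepsilon_{kj}\in\{-1,+1\}$,
\[
\mathbb E[\varepsilon_{kj}\mid\cdot\,]=Q_j^{n,s}(+1)-Q_j^{n,s}(-1).
\]
If $n_j\ge1$, substituting the definitions gives $\frac12+\frac{(2p-1)s_j}{2n_j}-\frac12+\frac{(2p-1)s_j}{2n_j}=(2p-1)s_j/n_j$. If $n_j=0$, both probabilities equal $\frac12$ and the expectation is $0$. In terms of $r_j(n,s)$, both cases together read $(2p-1)r_j(n,s)$.

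The only point requiring care is that $Q_j^{n,s}$ is a genuine probability law, so that the expectation is well defined. For this I would invoke the preserved admissibility condition \eqref{defN}: it gives $|s_j|\le n_j$, hence $|r_j|\le1$. Since $|2p-1|\le1$ for $p\in[0,1]$, both $Q_j^{n,s}(\pm1)$ lie in $[0,1]$, and they sum to $1$. This is the main (mild) obstacle, and admissibility resolves it. One also uses that admissibility holds at $t-$ for all times up to explosion, which was verified inductively over reinforced updates.
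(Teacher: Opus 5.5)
Your proposal is correct and matches the paper's proof, which likewise computes $Q_j^{n,s}(+1)-Q_j^{n,s}(-1)=(2p-1)r_j(n,s)$ directly from the definition of the sign law. Your additional checks are sound but are not needed for this identity: the marginal of $\varepsilon_{kj}$ depends only on $(n_j,s_j)$, and admissibility ($|s_j|\le n_j$) ensures that $Q_j^{n,s}$ is a genuine probability law.
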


\begin{proof}
By the definition of the reinforced sign law,
\begin{align*}
\mathbb E\left[
\varepsilon_{kj}\mid x,n,s
\right]
&=
Q_j^{n,s}(+1)-Q_j^{n,s}(-1)
\\
&=
\left(
\frac12+\frac{2p-1}{2}r_j(n,s)
\right)
-
\left(
\frac12-\frac{2p-1}{2}r_j(n,s)
\right)
\\
&=
(2p-1)r_j(n,s).
\end{align*}
\end{proof}

The preceding identity concerns the sign variable.  The actual synaptic
increment received by neuron $j$ is
\[
\xi_{kj}
=\begin{cases}
+w_{kj},&\varepsilon_{kj}=+1,\\
-v_{kj},&\varepsilon_{kj}=-1.
\end{cases}
\]

\begin{proposition}
\label{prop:conditional-synaptic-increment}
For every $j\in\mathcal N_k$,
\begin{align}
\mathbb E\left[
\xi_{kj}
\mid X_{t-}=x,\ N_{t-}=n,\ S_{t-}=s
\right]
=
\frac{w_{kj}-v_{kj}}{2}
+
\frac{w_{kj}+v_{kj}}{2}
(2p-1)r_j(n,s).
\label{eq:conditional-synaptic-drift}
\end{align}
Equivalently, for $n_j\geq1$,
\[
\mathbb E\left[
\xi_{kj}\mid x,n,s
\right]
=
\frac{w_{kj}-v_{kj}}{2}
+
\frac{w_{kj}+v_{kj}}{2}
(2p-1)\frac{s_j}{n_j},
\]
whereas, for $n_j=0$,
\[
\mathbb E\left[
\xi_{kj}\mid x,n,s
\right]
=
\frac{w_{kj}-v_{kj}}{2}.
\]
\end{proposition}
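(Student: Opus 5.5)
The plan is to express $\xi_{kj}$ as an affine function of the sign $\varepsilon_{kj}$ and then apply Proposition~\ref{prop:conditional-elephant-sign-drift}. Since $\varepsilon_{kj}\in\{-1,+1\}$, we have the pointwise identities $\mathbf 1_{\{\varepsilon_{kj}=+1\}}=(1+\varepsilon_{kj})/2$ and $\mathbf 1_{\{\varepsilon_{kj}=-1\}}=(1-\varepsilon_{kj})/2$. Substituting these into the definition of $\xi_{kj}$ gives
\[
\xi_{kj}
=
w_{kj}\frac{1+\varepsilon_{kj}}{2}
-
v_{kj}\frac{1-\varepsilon_{kj}}{2}
=
\frac{w_{kj}-v_{kj}}{2}
+
\frac{w_{kj}+v_{kj}}{2}\,\varepsilon_{kj}.
\]
This holds surely, so no integrability issues arise: the increment is bounded by $c_{kj}$.

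Next, take conditional expectations given $X_{t-}=x$, $N_{t-}=n$, $S_{t-}=s$. The amplitudes $w_{kj}$ and $v_{kj}$ are deterministic constants, so linearity of conditional expectation yields
\[
\mathbb E[\xi_{kj}\mid x,n,s]
=
\frac{w_{kj}-v_{kj}}{2}
+
\frac{w_{kj}+v_{kj}}{2}\,\mathbb E[\varepsilon_{kj}\mid x,n,s].
\]
Proposition~\ref{prop:conditional-elephant-sign-drift} gives $\mathbb E[\varepsilon_{kj}\mid x,n,s]=(2p-1)r_j(n,s)$, which is exactly \eqref{eq:conditional-synaptic-drift}.

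The two displayed special cases then follow by unpacking $r_j$. For $n_j\ge1$ we have $r_j=s_j/n_j$. For $n_j=0$ we have $r_j=0$, so only the constant term $(w_{kj}-v_{kj})/2$ remains.

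There is no real obstacle here. The only point to state explicitly is that the conditional law of $\varepsilon_{kj}$ given the full pre-jump state depends only on $(n_j,s_j)$ through $Q_j^{n,s}$. This is how the model is defined, and Proposition~\ref{prop:conditional-elephant-sign-drift} already encodes it, so the argument is a direct computation.
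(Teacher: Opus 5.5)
Your proof is correct and is essentially the paper's argument. The paper computes $w_{kj}Q_j^{n,s}(+1)-v_{kj}Q_j^{n,s}(-1)$ directly from the two-point sign law and substitutes $Q_j^{n,s}(\pm1)=\frac12\pm\frac{2p-1}{2}r_j(n,s)$, whereas you first write $\xi_{kj}=\frac{w_{kj}-v_{kj}}{2}+\frac{w_{kj}+v_{kj}}{2}\varepsilon_{kj}$ and then invoke Proposition~\ref{prop:conditional-elephant-sign-drift}, which is the same one-line computation reorganized to reuse the sign-drift identity.
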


\begin{proof}
Using the two possible values of $\xi_{kj}$, we obtain
\begin{align*}
\mathbb E\left[
\xi_{kj}\mid x,n,s
\right]
&=
w_{kj}Q_j^{n,s}(+1)
-
v_{kj}Q_j^{n,s}(-1)
\\
&=
w_{kj}
\left(
\frac12+\frac{2p-1}{2}r_j(n,s)
\right)
-
v_{kj}
\left(
\frac12-\frac{2p-1}{2}r_j(n,s)
\right)
\\
&=
\frac{w_{kj}-v_{kj}}{2}
+
\frac{w_{kj}+v_{kj}}{2}
(2p-1)r_j(n,s).
\end{align*}
\end{proof}

\begin{corollary}
\label{cor:symmetric-synaptic-amplitudes}
Suppose that
$w_{kj}=v_{kj}=c_{kj}.
$ 
Then
\[
\mathbb E\left[
\xi_{kj}\mid x,n,s
\right]
=
c_{kj}(2p-1)r_j(n,s).
\]
Consequently, in the symmetric-amplitude case the conditional synaptic drift is
entirely generated by the reinforcement mechanism.
\end{corollary}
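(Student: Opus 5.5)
This is a direct specialization of Proposition~\ref{prop:conditional-synaptic-increment}, so I will argue by substitution rather than repeat the sign computation. Starting from identity \eqref{eq:conditional-synaptic-drift}, I set $w_{kj}=v_{kj}=c_{kj}$. The coefficients then become
\[
\frac{w_{kj}-v_{kj}}{2}=0
\qquad\text{and}\qquad
\frac{w_{kj}+v_{kj}}{2}=c_{kj}.
\]
Consequently
\[
\mathbb E\left[\xi_{kj}\mid x,n,s\right]=c_{kj}(2p-1)r_j(n,s).
\]
This formula covers both cases $n_j\geq1$ and $n_j=0$ at once, because $r_j(n,s)$ was defined piecewise to vanish when $n_j=0$. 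The notation is also consistent: $c_{kj}=\max\{w_{kj},v_{kj}\}$ coincides with the common amplitude in this case.

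As a sanity check, one can also compute directly from the two-point law. In the symmetric case, $\xi_{kj}=c_{kj}\varepsilon_{kj}$. Linearity of conditional expectation together with Proposition~\ref{prop:conditional-elephant-sign-drift} then gives the same expression immediately.

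For the interpretive sentence, I observe that the constant part $\tfrac{w_{kj}-v_{kj}}{2}$ is the drift that would be present without reinforcement (the case $p=\tfrac12$). This term vanishes identically here, so every nonzero conditional drift comes from the factor $(2p-1)r_j(n,s)$, which encodes both the memory parameter and the reinforcement history. In particular, the drift is zero whenever $p=\tfrac12$, $n_j=0$, or $s_j=0$.

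There is no genuine obstacle. The only point to watch is the $n_j=0$ case, and the convention $r_j=0$ already handles it.
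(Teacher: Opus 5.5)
Your proposal is correct and matches the paper's argument, which gives no separate proof: the corollary follows by substituting $w_{kj}=v_{kj}=c_{kj}$ into \eqref{eq:conditional-synaptic-drift}, which removes the baseline term $\frac{w_{kj}-v_{kj}}{2}$ and leaves $c_{kj}(2p-1)r_j(n,s)$. Your cross-check via $\xi_{kj}=c_{kj}\varepsilon_{kj}$ and Proposition~\ref{prop:conditional-elephant-sign-drift} is also valid, and you correctly note that the convention $r_j=0$ handles the case $n_j=0$.
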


\begin{remark} 
\label{rem:elephant-parameter-interpretation}
Formula \eqref{eq:conditional-synaptic-drift} separates the intrinsic asymmetry
of the synaptic amplitudes from the contribution of the reinforcement memory.

\begin{enumerate}[label=\textnormal{(\roman*)}]
\item If $p=\frac12$, then
$Q_j^{n,s}(+1)=Q_j^{n,s}(-1)=\frac12,
$ 
and the reinforcement history has no effect on the next sign. In this case,
\[
\mathbb E[\xi_{kj}\mid x,n,s]
=\frac{w_{kj}-v_{kj}}{2}.
\]
\item If $p>\frac12$, then $2p-1>0$. A positive accumulated sign
$s_j>0$ increases the probability of a future excitatory interaction, whereas a
negative accumulated sign increases the probability of a future inhibitory
interaction. Thus the reinforcement favors persistence of the accumulated sign.

\item If $p<\frac12$, then $2p-1<0$. A positive accumulated sign increases the
probability of an inhibitory interaction, while a negative accumulated sign
increases the probability of an excitatory interaction. Thus the reinforcement
favors reversal of the accumulated sign.

\item If $w_{kj}\neq v_{kj}$, the first term
$
\frac{w_{kj}-v_{kj}}{2}
$
 represents a baseline excitatory--inhibitory asymmetry, while the second term
 $
\frac{w_{kj}+v_{kj}}{2}(2p-1)r_j(n,s)
$ 
is the history-dependent contribution.
\end{enumerate}
\end{remark}

\begin{remark} 
Since $|r_j(n,s)|\leq1$, Proposition~\ref{prop:conditional-synaptic-increment}
implies
\[
\left|
\mathbb E[\xi_{kj}\mid x,n,s]
\right|
\leq
\frac{|w_{kj}-v_{kj}|}{2}
+
\frac{w_{kj}+v_{kj}}{2}|2p-1|
\leq
c_{kj},
\]
where
$ c_{kj}:=\max\{w_{kj},v_{kj}\}.
$ 
Thus the conditional drift remains uniformly bounded, consistently with the
bounded-increment assumptions used in the non-explosion argument.
\end{remark}

\section{Non-explosion}
\label{sec:nonexplosion}

Set $\ell_i:=\|\phi_i\|_{\mathrm{Lip}}$ and assume
\begin{equation}
\sum_{i=1}^{\infty}\ell_i<\infty,
\qquad
\sum_{i=1}^{\infty}\phi_i(0)<\infty.
\label{eq:L-assumption}
\end{equation}
We also assume
\begin{equation}
C_*
:=
\sup_{k\in\mathbb N}
\sum_{j\in\mathcal N_k}\ell_j c_{kj}
<\infty.
\label{eq:bounded-influence}
\end{equation}
Finally, $g:\mathbb R_+\to\mathbb R_+$ is locally Lipschitz, $g(0)=0$, and
$a_i\geq0$ for every $i$. Thus the deterministic flow is dissipative in the
sense that
\[
-a_i g(x)\leq0,
\qquad x\geq0.
\]

Define
\[
h(x):=\sum_{i=1}^{\infty}\ell_i x^i,
\qquad
\mathcal X_h:=\{x\in\mathbb R_+^{\mathbb N}:h(x)<\infty\}.
\]
Because the reinforcement variables affect only the probabilities of the two
bounded synaptic increments, they need not appear in the Lyapunov function.

Let
 $\varepsilon_k
=
(\varepsilon_{kj})_{j\in\mathcal N_k}$. Conditionally on the reinforcement state \((n,s)\), the coordinates
\(\varepsilon_{kj}\), \(j\in\mathcal N_k\), are independent and satisfy
\begin{equation*}
\mathbb P\bigl(\varepsilon_{kj}=\sigma\mid n,s\bigr)
=
Q_j^{n,s}(\sigma),
\qquad
\sigma\in\{-1,+1\}.
\end{equation*}
We denote by
$\mathbb E_{\varepsilon_k}^{n,s}$ the 
expectation with respect to this conditional distribution of
\(\varepsilon_k\).
\begin{lemma}
\label{lem:elephant-generator-bound}

Let
\[
C_0
:=
\sum_{i=1}^{\infty}\phi_i(0),
\]
and, for $L\in\mathbb N$, define
\[
h_L(x)
:=
\sum_{i=1}^{L}\ell_i x^i.
\]

Then, for every $x\in\mathcal X_h$, every admissible $(n,s)$, and every
$L\in\mathbb N$,
\[
\sum_{k=1}^{\infty}
\phi_k(x^k)
\mathbb E_{\varepsilon_k}^{n,s}
\left[
h_L\bigl(\Delta_k^{X,\varepsilon_k}(x)\bigr)-h_L(x)
\right]
\leq
C_*\bigl(C_0+h(x)\bigr).
\]
Moreover, on every sublevel set $\{h<m\}$,
\begin{align}
\sum_{k=1}^{\infty}
\phi_k(x^k)
\mathbb E_{\varepsilon_k}^{n,s}
\left|
h\bigl(\Delta_k^{X,\varepsilon_k}(x)\bigr)-h(x)
\right|
<\infty,
\label{eq:absolute-h-jump-bound}
\end{align}
uniformly in the admissible reinforcement state $(n,s)$.

The same bound holds uniformly with $h$ replaced by $h_L$,
$L\in\mathbb N$.

\end{lemma}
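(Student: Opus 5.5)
We bound the change of $h_L$ produced by a single spike of neuron $k$, pointwise in the reinforced signs, and then sum over $k$ with weights $\phi_k(x^k)$. No property of the reinforcement law beyond its being a probability on $\{-1,+1\}^{\mathcal N_k}$ will be used. Consequently all bounds are automatically uniform in the admissible state $(n,s)$.

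\emph{Pointwise jump bound.} Fix $k$ and a sign vector $\varepsilon_k$. By definition of $\Delta_k^{X,\varepsilon_k}$, only coordinates $j\in\{k\}\cup\mathcal N_k$ change. The reset coordinate contributes $-\ell_k x^k\mathbf 1_{\{k\le L\}}\le 0$. For $j\in\mathcal N_k$, the map $u\mapsto u_+$ is $1$-Lipschitz and $x^j\geq0$, so
\[
\bigl|(x^j+\xi_{kj})_+-x^j\bigr|\le|\xi_{kj}|\le c_{kj}.
\]
Since $\mathcal N_k$ is finite, we get
\[
h_L\bigl(\Delta_k^{X,\varepsilon_k}(x)\bigr)-h_L(x)\le\sum_{j\in\mathcal N_k,\,j\le L}\ell_jc_{kj}\le C_*,
\]
by \eqref{eq:bounded-influence}. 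Likewise
\[
\bigl|h_L(\Delta_k^{X,\varepsilon_k}(x))-h_L(x)\bigr|\le \ell_kx^k+C_*.
\]
The same estimates hold for $h$ itself. The series defining $h(\Delta_k x)-h(x)$ differs from that of $h(x)$ in only finitely many terms, so $\Delta_k x\in\mathcal X_h$ and the difference is a finite sum. Taking $\mathbb E^{n,s}_{\varepsilon_k}$ preserves both bounds, because they hold for every realization of the signs.

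\emph{Summation over $k$.} Using $\phi_k\ge0$ and the Lipschitz bound $\phi_k(x^k)\le\phi_k(0)+\ell_kx^k$, we obtain
\[
\sum_k\phi_k(x^k)\,\mathbb E^{n,s}_{\varepsilon_k}\bigl[h_L(\Delta_k^{X,\varepsilon_k}(x))-h_L(x)\bigr]\le C_*\sum_k\phi_k(x^k)\le C_*\bigl(C_0+h(x)\bigr),
\]
which is the first claim. Nonnegativity of $\phi_k$ is implicit in its role as an intensity. The negative reset term is simply discarded, and this is legitimate because the upper bound is applied termwise.

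\emph{Absolute bound on $\{h<m\}$.} The summed absolute variation is at most
\[
\sum_k(\phi_k(0)+\ell_kx^k)(\ell_kx^k+C_*).
\]
The key point is that $\ell_kx^k\le h(x)<m$ for every $k$. Hence each factor $\ell_kx^k+C_*$ is bounded by $m+C_*$, and the sum is at most $(m+C_*)(C_0+m)$. This bound is finite and independent of $(n,s)$ and of $L$, since the $h_L$-version is dominated by the same quantity. I expect this step to be the main obstacle: one must avoid the product $\sum_k\ell_k^2(x^k)^2$ appearing uncontrolled. The sup bound $\ell_kx^k\le h(x)$ handles it, and apart from this the argument is bookkeeping. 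A minor point is justifying that $h(\Delta_kx)-h(x)$ is a finite rearrangement, which follows from finiteness of $\mathcal N_k$.
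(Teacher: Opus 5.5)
Your proof is correct and follows the paper's argument essentially step by step. It uses the same ingredients: the pointwise jump bound with the nonpositive reset term discarded, the Lipschitz bound $\phi_k(x^k)\le\phi_k(0)+\ell_kx^k$, and, on $\{h<m\}$, the estimate $\sum_k(\phi_k(0)+u_k)(u_k+C_*)$ with $u_k=\ell_kx^k$. Your bound $\sup_k u_k\le h(x)<m$ gives exactly the paper's constant $(m+C_*)(C_0+m)=C_0m+m^2+C_*C_0+C_*m$, which the paper obtains by expanding the product.
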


\begin{proof}
Fix $L\in\mathbb N$. If neuron $k$ spikes, then
\begin{align*}h_L\bigl(\Delta_k^{X,\varepsilon_k}(x)\bigr)-h_L(x)
=
-\mathbf 1_{\{k\leq L\}}\ell_kx^k
+
\sum_{\substack{j\in\mathcal N_k\\ j\leq L}}
\ell_j
\left[
(x^j+\xi_{kj})_+-x^j
\right].
\end{align*}

Since $x^j\geq0$ and $|\xi_{kj}|\leq c_{kj}$,
\[
\left|
(x^j+\xi_{kj})_+-x^j
\right|
\leq
c_{kj}.
\]
Consequently,
\begin{align*}
\mathbb E_{\varepsilon_k}^{n,s}
\left[
h_L\bigl(\Delta_k^{X,\varepsilon_k}(x)\bigr)-h_L(x)
\right]
&\quad\leq
-\mathbf 1_{\{k\leq L\}}\ell_kx^k
+
\sum_{\substack{j\in\mathcal N_k\\ j\leq L}}
\ell_jc_{kj}
\\
&\quad\leq
-\mathbf 1_{\{k\leq L\}}\ell_kx^k
+
\sum_{j\in\mathcal N_k}\ell_jc_{kj}.
\end{align*}

Discarding the non-positive reset contribution gives
\[
\sum_{k=1}^{\infty}
\phi_k(x^k)
\mathbb E_{\varepsilon_k}^{n,s}
\left[
h_L\bigl(\Delta_k^{X,\varepsilon_k}(x)\bigr)-h_L(x)
\right]
\leq
C_*
\sum_{k=1}^{\infty}\phi_k(x^k).
\]
By the Lipschitz bound
\[
\phi_k(x^k)
\leq
\phi_k(0)+\ell_kx^k,
\]
we have
\[
\sum_{k=1}^{\infty}\phi_k(x^k)
\leq
C_0+h(x).
\]
Therefore,
\[
\sum_{k=1}^{\infty}
\phi_k(x^k)
\mathbb E_{\varepsilon_k}^{n,s}
\left[
h_L\bigl(\Delta_k^{X,\varepsilon_k}(x)\bigr)-h_L(x)
\right]
\leq
C_*\bigl(C_0+h(x)\bigr).
\]
We now prove the absolute local bound. For every $k$,
\begin{align*}
\left|
h_L\bigl(\Delta_k^{X,\varepsilon_k}(x)\bigr)-h_L(x)
\right|
\quad\leq
\ell_kx^k
+
\sum_{j\in\mathcal N_k}\ell_jc_{kj}.
\end{align*}
Set $u_k:=\ell_kx^k$. Then, 
$\sum_{k=1}^{\infty}u_k<m$, on the sublevel set $\{h<m\}$. 
Hence
\begin{align*}
\sum_{k=1}^{\infty}
\phi_k(x^k)
\mathbb E_{\varepsilon_k}^{n,s}
\left|
h_L\bigl(\Delta_k^{X,\varepsilon_k}(x)\bigr)-h_L(x)
\right|
\leq
C_0m+m^2+C_*C_0+C_*m.
\end{align*}
The right-hand side is independent of $L$.

Moreover, since $\mathcal N_k$ is finite,
\begin{align*}
\left|
h\bigl(\Delta_k^{X,\varepsilon_k}(x)\bigr)-h(x)
\right|
\leq
\ell_kx^k
+
\sum_{j\in\mathcal N_k}
\ell_jc_{kj}.
\end{align*}
Therefore, putting again $u_k:=\ell_kx^k$,
\begin{align*}\sum_{k=1}^{\infty}
\phi_k(x^k)
\mathbb E_{\varepsilon_k}^{n,s}
\left|
h\bigl(\Delta_k^{X,\varepsilon_k}(x)\bigr)-h(x)
\right|
\leq
\sum_{k=1}^{\infty}
\bigl(\phi_k(0)+u_k\bigr)
\left(
u_k+C_*
\right).
\end{align*}
On $\{h<m\}$, $\sum_k u_k<m$, and hence
\[
\sum_{k=1}^{\infty}
\phi_k(x^k)
\mathbb E_{\varepsilon_k}^{n,s}
\left|
h\bigl(\Delta_k^{X,\varepsilon_k}(x)\bigr)-h(x)
\right|
\leq
C_0m+m^2+C_*C_0+C_*m.
\]
This proves both absolute local bounds.
\end{proof}

\begin{proposition}  
\label{prop:local-construction}

Let $Y_0=(X_0,N_0,S_0)$ with $X_0\in\mathcal X_h$. Under
\eqref{eq:L-assumption}, the total firing intensity is finite at every state
$x\in\mathcal X_h$. More precisely,
\begin{equation*}
\sum_{k=1}^{\infty}\phi_k(x^k)
\leq
C_0+h(x)
<
\infty,
\end{equation*}
where
\begin{equation*}
C_0
:=
\sum_{k=1}^{\infty}\phi_k(0).
\end{equation*}

Consequently, starting from any state
$(x,n,s)$ with $x\in\mathcal X_h$, the first jump time is well defined.
After a spike of neuron $k$, the post-jump membrane-potential configuration
remains in $\mathcal X_h$. Indeed,
\begin{align*}
h\bigl(\Delta_k^{X,\varepsilon_k}(x)\bigr)
\leq
h(x)
+
\sum_{j\in\mathcal N_k}
\ell_j c_{kj}
\leq
h(x)+C_*.
\end{align*}
Hence the process may be constructed recursively from one jump time to the
next as long as the jump times do not accumulate.

Therefore there exists a unique minimal c\`adl\`ag process
\begin{equation*}
Y_t=(X_t,N_t,S_t),
\qquad
0\leq t<\tau_\infty,
\end{equation*}
where $\tau_\infty$ denotes the accumulation time of the successive spike
times. The process is unique up to $\tau_\infty$.
\end{proposition}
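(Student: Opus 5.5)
The proof splits into three claims: finiteness of the total intensity, invariance of $\mathcal X_h$ under the flow and the jumps, and the recursive minimal construction up to $\tau_\infty$. For the intensity bound, the Lipschitz property gives $\phi_k(x^k)\le \phi_k(0)+\ell_k x^k$ for $x^k\ge 0$. Summing over $k$ yields $\sum_k\phi_k(x^k)\le C_0+h(x)$, which is finite for $x\in\mathcal X_h$ by \eqref{eq:L-assumption}. This is the same estimate already used in the proof of Lemma~\ref{lem:elephant-generator-bound}.

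Next, $\mathcal X_h$ is stable along the deterministic flow and across jumps.
\begin{itemize}
\item \emph{Flow.} Each coordinate solves $\dot x^i=-a_i g(x^i)$ with $g$ locally Lipschitz and $g(0)=0$. Hence $0$ is an equilibrium, and uniqueness keeps solutions started in $\mathbb R_+$ inside $\mathbb R_+$. Since $-a_ig\le 0$ there, each coordinate is nonincreasing. So $t\mapsto h(x_t)$ is nonincreasing and the flow is globally defined without blow-up.
\item \emph{Jumps.} After a spike of $k$, the coordinate $x^k$ is reset to $0$, which only lowers $h$. For $j\in\mathcal N_k$ we have $(x^j+\xi_{kj})_+\le x^j+c_{kj}$. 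This gives $h(\Delta_k^{X,\varepsilon_k}(x))\le h(x)+\sum_{j\in\mathcal N_k}\ell_jc_{kj}\le h(x)+C_*$ by \eqref{eq:bounded-influence}.
\item \emph{Reinforcement.} The update of $(n,s)$ preserves admissibility by \eqref{defN}. So the sign laws $Q_j^{n,s}$ are well-defined probability measures at every step.
\end{itemize}

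For the construction, start from $(x,n,s)$ and let $x_t$ be the flow. The total intensity $\Lambda(x_t)=\sum_k\phi_k(x_t^k)$ is bounded by $C_0+h(x)$ along the flow. The individual terms $\phi_k(x^k_t)$ are continuous in $t$, and on compact time intervals the series converges uniformly (dominated by $\phi_k(0)+\ell_kx^k$). Hence $\Lambda(x_t)$ is continuous, and a standard exponential clock defines the first jump time $T_1=\inf\{t:\int_0^t\Lambda(x_u)du\ge E\}$ with $E\sim\mathrm{Exp}(1)$. This time is strictly positive, and it is finite or infinite. At $T_1$, choose the spiking neuron $k$ with probability $\phi_k(x^k_{T_1})/\Lambda(x_{T_1})$; this is a valid discrete distribution because $\Lambda<\infty$. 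Then sample $\varepsilon_k$ from the product of the $Q_j^{n,s}$ over the finite set $\mathcal N_k$ and apply $\Delta_k^{\varepsilon_k}$. By the jump bound, the new state lies in $\mathcal X_h$ with admissible $(n,s)$, so the procedure iterates. This produces jump times $T_1<T_2<\dots$ and a càdlàg process on $[0,\tau_\infty)$ with $\tau_\infty=\lim T_m$.

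Uniqueness up to $\tau_\infty$ holds because, on each inter-jump interval, the trajectory is determined by the flow. The law of the next jump time and mark is determined by the generator \eqref{eq:elephant-generator}. Any process with this generator must therefore agree in law with the recursive one up to each $T_m$, and hence up to $\tau_\infty$; minimality is built into the construction. The main subtle step is justifying that $\Lambda(x_t)$ is continuous along the infinite-dimensional flow, so that the exponential-clock recipe is legitimate. I would handle this by the dominated-convergence argument above, using the monotonicity $x_t^k\le x^k$ and the summable bound $\phi_k(0)+\ell_kx^k$.
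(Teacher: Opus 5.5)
Your proposal is correct and follows the paper's proof essentially step by step. It uses the same ingredients: the Lipschitz bound $\phi_k(x^k)\le\phi_k(0)+\ell_kx^k$ summed over $k$, the jump estimate $h(\Delta_k^{X,\varepsilon_k}(x))-h(x)\le\sum_{j\in\mathcal N_k}\ell_jc_{kj}\le C_*$, monotonicity of the potentials along the flow, and the standard recursive PDMP construction up to the accumulation time $\tau_\infty$.

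You also supply details the paper leaves implicit: nonnegativity and global existence of the flow, continuity of $t\mapsto\sum_k\phi_k(x_t^k)$ via the summable majorant, and validity of $Q_j^{n,s}$ from admissibility. Where the paper asserts pathwise uniqueness of the recursive construction from its driving randomness, you phrase uniqueness in law. This is a harmless difference at this level of detail, although claiming uniqueness for ``any process with this generator'' would strictly require a martingale-problem argument.
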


\begin{proof}
For every $k\in\mathbb N$, the Lipschitz property of $\phi_k$ gives
\begin{equation*}
\phi_k(x^k)
\leq
\phi_k(0)+\ell_k x^k.
\end{equation*}
Summing over $k$ yields
\begin{align*}
\sum_{k=1}^{\infty}\phi_k(x^k)
\leq
\sum_{k=1}^{\infty}\phi_k(0)
+
\sum_{k=1}^{\infty}\ell_k x^k
=
C_0+h(x)
<
\infty.
\end{align*}
Thus, at every state in $\mathcal X_h$, the superposition of the neuronal
firing clocks has finite total intensity.

Moreover, after a spike of neuron $k$,
\begin{align*}
h\bigl(\Delta_k^{X,\varepsilon_k}(x)\bigr)-h(x)
=
-\ell_k x^k
+
\sum_{j\in\mathcal N_k}
\ell_j
\left[
(x^j+\xi_{kj})_+-x^j
\right]
\leq
\sum_{j\in\mathcal N_k}
\ell_j c_{kj}
\leq
C_*.
\end{align*}
Since $h(x)<\infty$, it follows that
\begin{equation*}
h\bigl(\Delta_k^{X,\varepsilon_k}(x)\bigr)<\infty.
\end{equation*}
Thus every individual jump maps $\mathcal X_h$ into itself.

Between successive jumps, the deterministic flow satisfies
\begin{equation*}
\frac{d}{dt}X_t^i=-a_i g(X_t^i),
\end{equation*}
with $a_i\geq0$ and $g\geq0$, so the membrane potentials do not increase
along the deterministic flow. In particular, the deterministic evolution
preserves $\mathcal X_h$.

Therefore the usual recursive construction of a pure-jump PDMP applies:
starting from $Y_0$, one constructs the first jump time using the finite total
intensity, evolves deterministically until that time, applies the corresponding
post-jump map, and repeats the procedure. This construction is well defined
until the successive jump times accumulate. Its accumulation time is denoted
by $\tau_\infty$.

Since the deterministic flow, firing intensities, reinforcement laws, and
post-jump maps are prescribed by the current state, the recursive construction
is pathwise unique up to $\tau_\infty$.
\end{proof}

 Let  $N_k^{\mathrm{sp}}[0,t]$ denote the number of spikes produced by neuron
$k$ during the time interval $[0,t]$. We define the total number of spikes
in the network up to time $t$ by
\begin{equation*}
N^{\mathrm{sp}}[0,t]
:=
\sum_{k=1}^{\infty}N_k^{\mathrm{sp}}[0,t].
\end{equation*} Let
\[
\rho_r:=\inf\{t\geq0:N^{\mathrm{sp}}[0,t]\geq r\},
\qquad
\tau_\infty:=\lim_{r\to\infty}\rho_r,
\]
and, for $m,r\in\mathbb N$, define
\[
\eta_m:=\inf\{t\geq0:h(X_t)\geq m\},
\qquad
\sigma_{m,r}:=\eta_m\wedge\rho_r.
\]

For every $k\in\mathbb N$, let
$\mathcal E_k
:=
\{-1,+1\}^{\mathcal N_k}.
$ Since $\mathcal N_k$ is finite, write
\begin{equation*}
\mathcal N_k
=
\{j_1,\ldots,j_{m_k}\},
\qquad
m_k:=|\mathcal N_k|.
\end{equation*}

Let
$\Pi^k(ds,dz,du)
$ be independent Poisson random measures on
$\mathbb R_+\times\mathbb R_+
\times[0,1]^{m_k}
$ with intensity
$ds\,dz\,du,
$ where $du$ denotes Lebesgue measure on $[0,1]^{m_k}$.

For an admissible reinforcement state $(n,s)$ and
$u=(u_1,\ldots,u_{m_k})\in[0,1]^{m_k}$, define the sign vector
\begin{equation*}
F_k(n,s,u)
=
\left(
F_{k,j}(n,s,u)
\right)_{j\in\mathcal N_k}
\in\mathcal E_k
\end{equation*}
by
\begin{equation*}
F_{k,j_r}(n,s,u)
=
\begin{cases}
+1,
&
u_r\leq Q_{j_r}^{n,s}(+1),
\\
-1,
&
u_r>Q_{j_r}^{n,s}(+1),
\end{cases}
\qquad
r=1,\ldots,m_k.
\end{equation*}
Then, conditionally on $(N_{s-},S_{s-})$, the coordinates of
$F_k(N_{s-},S_{s-},u)
$ are independent and have respective laws
$Q_j^{N_{s-},S_{s-}}$, $j\in\mathcal N_k$.

The marked point measure associated with the spikes of neuron $k$ is defined
as the image of $\Pi^k$ under the predictable marking map
\begin{equation*}
u
\longmapsto
F_k(N_{s-},S_{s-},u).
\end{equation*}
We denote this integer-valued random measure by
$N^k(ds,dz,d\varepsilon_k).
$ Its predictable compensator is
\begin{equation*}
ds\,dz\,
Q_k^{N_{s-},S_{s-}}(d\varepsilon_k),
\end{equation*}
where
\begin{equation*}
Q_k^{n,s}(d\varepsilon_k)
=
\bigotimes_{j\in\mathcal N_k}
Q_j^{n,s}(d\varepsilon_{kj}).
\end{equation*}

Accordingly, the compensated integer-valued random measure is
\begin{equation*}
\widetilde N^k(ds,dz,d\varepsilon_k)
:=
N^k(ds,dz,d\varepsilon_k)
-
ds\,dz\,
Q_k^{N_{s-},S_{s-}}(d\varepsilon_k).
\end{equation*}
\begin{lemma} 
\label{lem:predictable_integrable_integrands}

Fix \(m,r\in\mathbb N\) and \(T>0\). For every \(k\in\mathbb N\), define
\begin{align*}
H_k(s,z,\varepsilon_k)
:={}&
\mathbf 1_{\{s<\sigma_{m,r}\}}
\left[
h\bigl(\Delta_k^{X,\varepsilon_k}(X_{s-})\bigr)
-
h(X_{s-})
\right]
\mathbf 1_{\{z\leq\phi_k(X_{s-}^k)\}}.
\end{align*}

Then \(H_k\) is predictable and integrable. Moreover, for every fixed \(k\in\mathbb N\), the process
\begin{align*}
I_t^k
:=
\int_0^t
\int_0^\infty
\int_{\mathcal E_k}
H_k(s,z,\varepsilon_k)\,
\widetilde N^k(ds,dz,d\varepsilon_k),
\qquad t\geq0,
\end{align*}
is an integrable martingale with respect to
\((\mathcal F_t)_{t\geq0}\).
\end{lemma}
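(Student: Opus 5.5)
We prove predictability first, then integrability with respect to the compensator, and finally invoke the standard martingale property of stochastic integrals against compensated integer-valued random measures.

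\emph{Predictability.} The indicator $\mathbf 1_{\{s<\sigma_{m,r}\}}=\mathbf 1_{(0,\sigma_{m,r}]}(s)$ up to the endpoint convention is left-continuous and adapted, since $\sigma_{m,r}$ is a stopping time. The maps $s\mapsto X_{s-}$, $N_{s-}$, $S_{s-}$ are left-continuous and adapted. For fixed $(z,\varepsilon_k)$, the quantity $h(\Delta_k^{X,\varepsilon_k}(X_{s-}))-h(X_{s-})$ equals
\[
-\ell_kX_{s-}^k+\sum_{j\in\mathcal N_k}\ell_j\bigl[(X_{s-}^j+\xi_{kj})_+-X_{s-}^j\bigr].
\]
Here $\xi_{kj}$ depends only on $\varepsilon_{kj}$ and the fixed amplitudes, and the sum is finite because $\mathcal N_k$ is finite. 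This expression is therefore a Borel function of finitely many coordinates of $X_{s-}$ and of $\varepsilon_k$. Likewise $\mathbf 1_{\{z\le\phi_k(X^k_{s-})\}}$ is jointly measurable in $(s,\omega,z)$ and predictable. Hence $H_k$ is $\mathcal P\otimes\mathcal B(\mathbb R_+)\otimes 2^{\mathcal E_k}$-measurable.

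\emph{Integrability.} Fix $T$. On $\{s<\sigma_{m,r}\}$ we have $s<\eta_m$, so $h(X_{s-})\le m$. This uses that for $s\le\eta_m$ the left limit satisfies $h(X_{s-})\le m$, since $h(X_u)<m$ for $u<\eta_m$. Then, by the bound in the proof of Lemma~\ref{lem:elephant-generator-bound},
\[
|H_k|\le \ell_kX^k_{s-}+\sum_{j\in\mathcal N_k}\ell_jc_{kj}\le m+C_*,
\]
and $\phi_k(X^k_{s-})\le\phi_k(0)+m$. Integrating against the compensator $ds\,dz\,Q_k(d\varepsilon_k)$ over $[0,T]$ gives
\[
\mathbb E\int_0^T\!\!\int\!\!\int|H_k|\,ds\,dz\,Q_k\le T(\phi_k(0)+m)(m+C_*)<\infty.
\]
This $L^1$ bound with respect to the compensator implies that the integrals of $|H_k|$ against $N^k$ and against its compensator both have finite expectation, and the two expectations coincide.

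\emph{Martingale property.} For a predictable $H$ with $\mathbb E\int|H|\,d\nu<\infty$ on $[0,T]$ for every $T$, where $\nu$ is the compensator of the integer-valued measure $N^k$, the integral $\int H\,d(N^k-\nu)$ is a martingale of finite variation (Jacod--Shiryaev, II.1.28). In our situation, $\nu$ is the compensator of $N^k$ by construction: $N^k$ is the image of the Poisson measure $\Pi^k$ under a predictable marking, so $\int H\,dN^k=\int H(s,z,F_k(N_{s-},S_{s-},u))\,\Pi^k(ds,dz,du)$, and predictable integrands against a Poisson random measure are compensated by $ds\,dz\,du$. Integrating out $u$ yields $Q_k$.

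The main obstacle is the subtle point that $\sigma_{m,r}$ may equal $\tau_\infty$. One must check that $X_{s-}$ is defined and bounded on $\{s<\sigma_{m,r}\}$. Since $\rho_r<\tau_\infty$ whenever the process has at least $r$ spikes, the stopped process is a genuine finite-jump path on $[0,\sigma_{m,r}]$, which resolves this. The cutoff $r$ is not even needed for the $L^1$ bound above, because the $m$-localization alone suffices.
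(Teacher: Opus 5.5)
Your proof is correct for the lemma as stated and follows the paper's structure: predictability of the left limits and of the stopping-time indicator, a localized bound on the compensator integral, and then the compensation formula, which you cite as Jacod--Shiryaev II.1.28.

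\textbf{Integrability estimate.} The substantive difference is here. You bound each $k$ separately by $T(\phi_k(0)+m)(m+C_*)$, using the crude estimate $\ell_kX^k_{s-}\le m$. The paper instead invokes the localized absolute estimate \eqref{eq:absolute-h-jump-bound} to bound the sum over all $k$ by a single constant $C_mT$.

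For fixed $k$ both bounds suffice. Yours, however, is not summable in $k$, since each term is at least $Tm(m+C_*)$. The paper's summed version is what verifies the integrability hypothesis of Lemma~\ref{lem:elephant-stopped-poisson-martingale} when passing to $M_t=\sum_k I^k_t$. To obtain it you must keep $u_k=\ell_kX^k_{s-}$ with $\sum_k u_k\le m$, as in the proof of Lemma~\ref{lem:elephant-generator-bound}.

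\textbf{Endpoint convention.} This correction applies equally to the paper's own proof. The process $\mathbf 1_{\{s<\sigma_{m,r}\}}$ is right-continuous, not left-continuous. Because $\sigma_{m,r}$ is itself a spike time, the set $\{s<\sigma_{m,r}\}$ is not predictable. The endpoint is also not a harmless convention for the integral against $N^k$, which may carry an atom at $\sigma_{m,r}$.

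The predictable integrand is the one built with $\mathbf 1_{\{s\le\sigma_{m,r}\}}$. This is also what the later Dynkin argument at $t\wedge\sigma_{m,r}$ requires, since the jump at $\sigma_{m,r}$ must be included. This version agrees with $H_k$ for $ds$-almost every $s$, so all compensator computations are unchanged. Your observation that $h(X_{s-})\le m$ for $s\le\eta_m$ is exactly what keeps your bound valid at the endpoint.
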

\begin{proof}
Since \(Y=(X,N,S)\) is adapted and c\`adl\`ag, the left-limit process
\((Y_{s-})_{s>0}\) is predictable. Moreover, since
\(\sigma_{m,r}\) is a stopping time, the process
\begin{equation*}
s
\longmapsto
\mathbf 1_{\{s<\sigma_{m,r}\}}
\end{equation*}
is predictable. Therefore, for every \(k\in\mathbb N\), the mapping
\begin{equation*}
(\omega,s,z,\varepsilon_k)
\longmapsto
H_k(s,z,\varepsilon_k)
\end{equation*}
is predictable. Integrating with respect to \(z\) and the conditional distribution of
\(\varepsilon_k\), we obtain
\begin{align*}
&\int_0^\infty
\int_{\mathcal E_k}
\left|
H_k(s,z,\varepsilon_k)
\right|
Q_k^{N_{s-},S_{s-}}(d\varepsilon_k)
\,dz
\\
&\quad=
\mathbf 1_{\{s<\sigma_{m,r}\}}
\phi_k(X_{s-}^k)
\mathbb E_{\varepsilon_k}^{N_{s-},S_{s-}}
\left[
\left|
h\bigl(\Delta_k^{X,\varepsilon_k}(X_{s-})\bigr)
-
h(X_{s-})
\right|
\right].
\end{align*}
On the stochastic interval \(\{s<\sigma_{m,r}\}\), the process belongs to
the corresponding Lyapunov sublevel set. By the localized absolute jump
estimate proved above, there exists a finite constant \(C_m\), independent
of \(s\), such that
\begin{align*}
&\mathbf 1_{\{s<\sigma_{m,r}\}}
\sum_{k=1}^{\infty}
\phi_k(X_{s-}^k)
\mathbb E_{\varepsilon_k}^{N_{s-},S_{s-}}
\left[
\left|
h\bigl(\Delta_k^{X,\varepsilon_k}(X_{s-})\bigr)
-
h(X_{s-})
\right|
\right]
\leq C_m.
\end{align*}
Consequently,
\begin{align*}
&\mathbb E
\sum_{k=1}^{\infty}
\int_0^T
\int_0^\infty
\int_{\mathcal E_k}
\left|
H_k(s,z,\varepsilon_k)
\right|
Q_k^{N_{s-},S_{s-}}(d\varepsilon_k)
\,dz\,ds
\\
&=
\mathbb E
\int_0^T
\mathbf 1_{\{s<\sigma_{m,r}\}}
\sum_{k=1}^{\infty}
\phi_k(X_{s-}^k)
\mathbb E_{\varepsilon_k}^{N_{s-},S_{s-}}
\left[
\left|
h\bigl(\Delta_k^{X,\varepsilon_k}(X_{s-})\bigr)
-
h(X_{s-})
\right|
\right]
\,ds
\\
&\leq C_m T
<
\infty.
\end{align*}
Thus, \(H_k\) is predictable and the family of stopped integrands is
integrable.

For every fixed \(k\in\mathbb N\), predictability of \(H_k\) and the preceding
integrability estimate imply that the compensated stochastic integral
\begin{align*}
I_t^k
:=
\int_0^t
\int_0^\infty
\int_{\mathcal E_k}
H_k(s,z,\varepsilon_k)\,
\widetilde N^k(ds,dz,d\varepsilon_k)
\end{align*}
is well defined and integrable.

Moreover, for \(0\leq u\leq t\), the conditional compensation formula gives
\begin{align*}
\mathbb E
\left[
I_t^k-I_u^k
\,\middle|\,
\mathcal F_u
\right]
=
0.
\end{align*}
Therefore,
\begin{align*}
\mathbb E
\left[
I_t^k
\,\middle|\,
\mathcal F_u
\right]
=
I_u^k.
\end{align*}
Hence \(\bigl(I_t^k\bigr)_{t\geq0}\) is an integrable martingale.
\end{proof}
\begin{lemma} 
\label{lem:elephant-stopped-poisson-martingale}

Fix \(m,r\in\mathbb N\). For every $(k\in\mathbb N)$, let
$\mathcal E_k
:=
\{-1,+1\}^{\mathcal N_k}
$ be the set of possible sign configurations associated with a spike of neuron
(k).

Assume that
\begin{align*}
\mathbb E
\sum_{k=1}^{\infty}
\int_0^t
\int_0^\infty
\int_{\mathcal E_k}
\left|
H_k(s,z,\varepsilon_k)
\right|
Q_k^{N_{s-},S_{s-}}(d\varepsilon_k)
dzds
<
\infty.
\end{align*}
Then
\begin{align*}
M_t
:=
\sum_{k=1}^{\infty}
\int_0^t
\int_0^\infty
\int_{\mathcal E_k}
H_k(s,z,\varepsilon_k)\,
\widetilde N^k(ds,dz,d\varepsilon_k),
\end{align*}
is a well-defined integrable martingale. In particular,
$\mathbb E M_t=0.
$\end{lemma}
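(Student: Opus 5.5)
The plan is to build $M_t$ as an $L^1$-limit of the finite partial sums $M_t^K:=\sum_{k=1}^K I_t^k$. Each $I^k$ is an integrable martingale by Lemma~\ref{lem:predictable_integrable_integrands}, so $M^K$ is one too. The summability hypothesis will then push the martingale property to the limit.

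\textbf{Step 1: split each compensated integral into two nonnegative-measure integrals.} For each $k$, write $I_t^k=A_t^k-B_t^k$, where
\[
A_t^k:=\int_0^t\!\int_0^\infty\!\int_{\mathcal E_k}H_k\,N^k(ds,dz,d\varepsilon_k),
\qquad
B_t^k:=\int_0^t\!\int_0^\infty\!\int_{\mathcal E_k}H_k\,Q_k^{N_{s-},S_{s-}}(d\varepsilon_k)\,dz\,ds .
\]
Since $H_k$ is predictable, the compensation formula for the integer-valued measure $N^k$ applies to the nonnegative predictable integrand $|H_k|$. It gives
\[
\mathbb E\int|H_k|\,dN^k=\mathbb E\int|H_k|\,Q_k\,dz\,ds .
\]
Summing over $k$ and using the hypothesis yields
\[
\mathbb E\sum_k\int|H_k|\,dN^k<\infty
\qquad\text{and}\qquad
\mathbb E\sum_k\int|H_k|\,Q_k\,dz\,ds<\infty .
\]
Hence $\sum_k A_t^k$ and $\sum_k B_t^k$ converge absolutely almost surely and in $L^1$. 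So $M_t:=\sum_k(A_t^k-B_t^k)$ is well defined, integrable, and independent of the order of summation.

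\textbf{Step 2: quantitative convergence of the partial sums.} The estimate from Step 1 gives, for every $t$,
\[
\mathbb E|M_t-M_t^K|\le 2\,\mathbb E\sum_{k>K}\int_0^t\!\int_0^\infty\!\int_{\mathcal E_k}|H_k|\,Q_k^{N_{s-},S_{s-}}(d\varepsilon_k)\,dz\,ds .
\]
This is the tail of a convergent series, so it tends to $0$ as $K\to\infty$. The bound is monotone in $t$, so it holds uniformly for $t$ in the finite horizon $[0,T]$ on which the hypothesis is assumed. The same argument applies at any time up to that horizon; in the application this is the $C_mT$ bound of Lemma~\ref{lem:predictable_integrable_integrands}.

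\textbf{Step 3: pass the martingale property to the limit.} For $u\le t$, each $M^K$ satisfies $\mathbb E[M_t^K\mid\mathcal F_u]=M_u^K$. Conditional expectation is an $L^1$ contraction, so Step 2 gives $\mathbb E[M_t^K\mid\mathcal F_u]\to\mathbb E[M_t\mid\mathcal F_u]$ in $L^1$, while $M_u^K\to M_u$ in $L^1$. Therefore $\mathbb E[M_t\mid\mathcal F_u]=M_u$. Adaptedness of $M$ is inherited as an almost sure limit of adapted processes. Finally, $\mathbb E M_t=\mathbb E M_0=0$.

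\textbf{Main obstacle.} The delicate point is Step 1: justifying the interchange of the infinite sum over $k$ with the integral against the random measures. I would handle it entirely through nonnegative integrands $|H_k|$, using monotone convergence together with the compensation identity. This avoids needing any joint orthogonality of the $\widetilde N^k$, although those measures are in fact built from independent $\Pi^k$.
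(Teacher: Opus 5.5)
Your proposal is correct and follows the paper's proof. You form the partial sums $M_t^{(K)}=\sum_{k\le K}I_t^k$ of the martingales from Lemma~\ref{lem:predictable_integrable_integrands}, bound $\mathbb E|M_t-M_t^{(K)}|$ by twice the compensator tail via the compensation formula, and pass the martingale property to the limit by $L^1$-continuity of conditional expectation; the only difference is that you define $M_t$ as an almost surely absolutely convergent series (splitting each $I^k$ into its $N^k$-integral and its compensator), whereas the paper obtains it as an $L^1$-Cauchy limit, which is a minor variation that additionally gives order-independence.
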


\begin{proof}
For $(K\in\mathbb N),$ define
\begin{align*}
M_t^{(K)}
:=
\sum_{k=1}^{K}
\int_0^t
\int_0^\infty
\int_{\mathcal E_k}
H_k(s,z,\varepsilon_k)
\widetilde N^k(ds,dz,d\varepsilon_k).
\end{align*}
Since
\begin{align*}
M_t^{(K)}
=
\sum_{k=1}^{K} I_t^k,
\end{align*}
and each \(\bigl(I_t^k\bigr)_{t\geq0}\) is an integrable martingale by
Lemma~\ref{lem:predictable_integrable_integrands}, the process
\(\bigl(M_t^{(K)}\bigr)_{t\geq0}\) is an integrable martingale. For $(L>K)$, the compensation formula and the triangle inequality give
\begin{align*}
\mathbb E
\left|
M_t^{(L)}-M_t^{(K)}
\right|
\leq
2
\mathbb E
\sum_{k=K+1}^{L}
\int_0^t
\int_0^\infty
\int_{\mathcal E_k}
\left|
H_k(s,z,\varepsilon_k)
\right|
Q_k^{N_{s-},S_{s-}}(d\varepsilon_k)
dz,ds.
\end{align*}
By the integrability assumption, the right-hand side converges to zero as
$(K,L\to\infty)$. Hence $(\bigl(M_t^{(K)}\bigr)_{K\geq1})$ is Cauchy in
$(L^1)$, and therefore converges in $(L^1)$ to an integrable random variable,
which we denote by $(M_t)$.  For $(0\leq u\leq t)$, continuity of conditional expectation under
$(L^1)$-convergence yields
\begin{align*}
\mathbb E
\left[
M_t
\middle|
\mathcal F_u
\right]
=
\lim_{K\to\infty}
\mathbb E
\left[
M_t^{(K)}
\middle|\mathcal F_u
\right]
=
\lim_{K\to\infty}
M_u^{(K)}
=
M_u.
\end{align*}
Thus $(M_t)_{t\geq0}$ is an integrable martingale. Finally,
\begin{equation*}
\mathbb E M_t
= \mathbb E M_0
=0.
\end{equation*}
\end{proof}

\begin{lemma} 
\label{lem:elephant-localized-dynkin}

For every $m>h(X_0)$, $r\in\mathbb N$, and $t\geq0$,
\[
\mathbb E h(X_{t\wedge\sigma_{m,r}})
\leq
h(X_0)
+
C_*
\int_0^t
\left(
C_0+
\mathbb E h(X_{u\wedge\sigma_{m,r}})
\right)du.
\]
\end{lemma}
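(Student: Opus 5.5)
The plan is to write the pathwise semimartingale decomposition of $h(X_{t\wedge\sigma_{m,r}})$ via the Poisson random measures $N^k$, take expectations using Lemma~\ref{lem:elephant-stopped-poisson-martingale} to remove the martingale part, and then bound the compensator through Lemma~\ref{lem:elephant-generator-bound}.

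\textbf{Step 1: pathwise decomposition.} Since $\rho_r$ is part of $\sigma_{m,r}$, only finitely many spikes occur on $[0,t\wedge\sigma_{m,r}]$. Between spikes each coordinate is nonincreasing, because $-a_ig(x^i)\le 0$. Using the truncations $h_L$ (finite sums) and monotone convergence, the flow contribution to $h$ between consecutive jumps is nonpositive. Summing over jumps gives
\begin{align*}
h(X_{t\wedge\sigma_{m,r}})\le h(X_0)+\sum_{k=1}^\infty\int_0^t\int_0^\infty\int_{\mathcal E_k}H_k(s,z,\varepsilon_k)\,N^k(ds,dz,d\varepsilon_k),
\end{align*}
with $H_k$ as in Lemma~\ref{lem:predictable_integrable_integrands}. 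Here $h(X_0)<m$ and $h$ is finite along the way by Proposition~\ref{prop:local-construction}. One detail needs care: the jump at time $\sigma_{m,r}$ itself is included, because the indicator is $\mathbf 1_{\{s<\sigma_{m,r}\}}$ in the predictable sense, evaluated at $s-$. Since $\{s\le\sigma\}$ is the predictable set, I will use $s\le\sigma_{m,r}$ equivalently. The pre-jump state $X_{s-}$ lies in $\{h\le m\}$, so the bounds of Lemma~\ref{lem:elephant-generator-bound} with $m+1$ in place of $m$ still apply.

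\textbf{Step 2: compensation.} Rewrite $N^k=\widetilde N^k+ds\,dz\,Q_k^{N_{s-},S_{s-}}(d\varepsilon_k)$. The integrability hypothesis of Lemma~\ref{lem:elephant-stopped-poisson-martingale} is exactly the estimate proved in Lemma~\ref{lem:predictable_integrable_integrands}, which gives the bound $C_mt$. Hence the $\widetilde N$-part is a martingale with zero mean, and
\begin{align*}
\mathbb E h(X_{t\wedge\sigma_{m,r}})\le h(X_0)+\mathbb E\int_0^t\mathbf 1_{\{s\le\sigma_{m,r}\}}\sum_k\phi_k(X^k_{s-})\,\mathbb E^{N_{s-},S_{s-}}_{\varepsilon_k}\bigl[h(\Delta_k^{X,\varepsilon_k}(X_{s-}))-h(X_{s-})\bigr]ds.
\end{align*}

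\textbf{Step 3: bound the drift.} Apply Lemma~\ref{lem:elephant-generator-bound}. Its $h_L$ bound passes to $h$ by letting $L\to\infty$, with dominated convergence justified by the absolute bound \eqref{eq:absolute-h-jump-bound}; alternatively, the same computation with $h$ directly works. This bounds the integrand by $C_*(C_0+h(X_{s-}))\mathbf 1_{\{s\le\sigma_{m,r}\}}\le C_*(C_0+h(X_{s\wedge\sigma_{m,r}}))$, after replacing $X_{s-}$ by $X_s$ for Lebesgue-a.e.\ $s$, since there are countably many jumps. Fubini then yields the claim.

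The main obstacle is Step 1. The flow acts on infinitely many coordinates simultaneously, so $h$ is not a cylinder function and the generator \eqref{eq:elephant-generator} cannot be applied to it directly. I would handle this by proving the inequality first for $h_L$, where Itô's formula for the finitely many coordinates is elementary and the drift term is $\le0$. Then I would pass to $L\to\infty$ by monotone convergence on the left side and by the $L$-uniform absolute bound of Lemma~\ref{lem:elephant-generator-bound} on the jump terms.
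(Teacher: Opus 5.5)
Your proposal is correct and is essentially the paper's argument. The paper applies Dynkin's formula to the cylinder truncations $h_L$, drops the nonpositive flow term, bounds each jump increment by $C_*$ and $\sum_k\phi_k$ by $C_0+h$, replaces $\mathbf 1_{\{u<\sigma_{m,r}\}}h(X_{u-})$ by $h(X_{u\wedge\sigma_{m,r}})$ for a.e.\ $u$, and lets $L\to\infty$ by monotone convergence, which is exactly your fallback route. Your main route of working with $h$ directly through the compensated integrals of Lemmas~\ref{lem:predictable_integrable_integrands}--\ref{lem:elephant-stopped-poisson-martingale} is equally valid, since only finitely many jumps precede $\rho_r$ and the flow does not increase $h$; your switch to the predictable indicator $\mathbf 1_{\{s\le\sigma_{m,r}\}}$ correctly accounts for the jump at time $\sigma_{m,r}$, which the paper's $du$-form Dynkin formula never has to address.
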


\begin{proof}
For $L\in\mathbb N$, define
\[
h_L(x)
:=
\sum_{i=1}^{L}\ell_i x^i.
\]
Since $h_L$ depends on finitely many coordinates, Dynkin's formula may be
applied to the stopped process $Y_{t\wedge\sigma_{m,r}}$. Hence
\begin{align*}
\mathbb E h_L(X_{t\wedge\sigma_{m,r}})
={}&
h_L(X_0)
-
\mathbb E
\int_0^t
\mathbf 1_{\{u<\sigma_{m,r}\}}
\sum_{i=1}^{L}
a_i\ell_i g(X_{u-}^i)
\,du
\\
+
\mathbb E
\int_0^t
\mathbf 1_{\{u<\sigma_{m,r}\}}
\sum_{k=1}^{\infty}
&\phi_k(X_{u-}^k)
\mathbb E_{\varepsilon_k}^{N_{u-},S_{u-}}
\left[
h_L\bigl(
\Delta_k^{X,\varepsilon_k}(X_{u-})
\bigr)
-
h_L(X_{u-})
\right]
du.
\end{align*}

Since $a_i\geq0$ and $g\geq0$, the deterministic contribution is
non-positive. Therefore,
\begin{align*}
\mathbb E & h_L(X_{t\wedge\sigma_{m,r}})
\leq{}
h_L(X_0)+
\\
&+
\mathbb E
\int_0^t
\mathbf 1_{\{u<\sigma_{m,r}\}}
\sum_{k=1}^{\infty}
\phi_k(X_{u-}^k)
\mathbb E_{\varepsilon_k}^{N_{u-},S_{u-}}
\left[
h_L\bigl(
\Delta_k^{X,\varepsilon_k}(X_{u-})
\bigr)
-
h_L(X_{u-})
\right]
du.
\end{align*}

For every spike of neuron $k$,
\[
h_L\bigl(
\Delta_k^{X,\varepsilon_k}(x)
\bigr)
-
h_L(x)
\leq
\sum_{\substack{j\in\mathcal N_k\\ j\leq L}}
\ell_j c_{kj}
\leq
\sum_{j\in\mathcal N_k}
\ell_j c_{kj}
\leq
C_*.
\]
Consequently,
\begin{align*}
\mathbb E h_L(X_{t\wedge\sigma_{m,r}})
\leq{}&
h_L(X_0)
+
C_*
\mathbb E
\int_0^t
\mathbf 1_{\{u<\sigma_{m,r}\}}
\sum_{k=1}^{\infty}
\phi_k(X_{u-}^k)
\,du.
\end{align*}

Using
\[
\phi_k(x^k)
\leq
\phi_k(0)+\ell_kx^k,
\]
we have
\[
\sum_{k=1}^{\infty}\phi_k(x^k)
\leq
C_0+h(x).
\]
Thus,
\begin{align*}
\mathbb E h_L(X_{t\wedge\sigma_{m,r}})
\leq{}&
h_L(X_0)
+
C_*
\int_0^t
\mathbb E
\left[
\mathbf 1_{\{u<\sigma_{m,r}\}}
\left(
C_0+h(X_{u-})
\right)
\right]du.
\end{align*}

Since
\[
\mathbf 1_{\{u<\sigma_{m,r}\}}
h(X_{u-})
\leq
h(X_{u\wedge\sigma_{m,r}})
\]
for Lebesgue-almost every $u$, it follows that
\begin{align*}
\mathbb E h_L(X_{t\wedge\sigma_{m,r}})
\leq{}&
h_L(X_0)
+
C_*
\int_0^t
\left(
C_0+
\mathbb E h(X_{u\wedge\sigma_{m,r}})
\right)du.
\end{align*}

Finally, since
\[
h_L(x)\uparrow h(x)
\qquad\text{as }L\to\infty,
\]
the monotone convergence theorem yields
\[
\mathbb E h(X_{t\wedge\sigma_{m,r}})
\leq
h(X_0)
+
C_*
\int_0^t
\left(
C_0+
\mathbb E h(X_{u\wedge\sigma_{m,r}})
\right)du.
\]
\end{proof}

\begin{lemma} 
\label{lem:elephant-stopped-bounds}
For every $T>0$, there exists $C_T<\infty$, independent of $r$, such that
\[
\sup_{0\leq t\leq T}\mathbb Eh(X_{t\wedge\rho_r})\leq C_T
\]
and
\[
\mathbb EN^{\mathrm{sp}}[0,T\wedge\rho_r]\leq C_T.
\]
\end{lemma}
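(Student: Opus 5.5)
Throughout, fix $T>0$ and $r\in\mathbb N$; the plan is to apply Gronwall to Lemma~\ref{lem:elephant-localized-dynkin}, remove the $\eta_m$-localization by letting $m\to\infty$, and then control the spike count by compensating the spike counting measure. Set $\psi_{m}(t):=\mathbb E h(X_{t\wedge\sigma_{m,r}})$. This function is finite and bounded on $[0,T]$ by roughly $m+C_*$: before $\sigma_{m,r}$ we have $h<m$, and the single jump at $\sigma_{m,r}$ raises $h$ by at most $C_*$, by Proposition~\ref{prop:local-construction}. It is also measurable, by right-continuity of paths and dominated convergence. Lemma~\ref{lem:elephant-localized-dynkin} gives
\[
\psi_m(t)\leq h(X_0)+C_*C_0t+C_*\int_0^t\psi_m(u)\,du ,
\]
so Gronwall yields $\psi_m(t)\leq (h(X_0)+C_*C_0T)e^{C_*T}=:C_T'$, uniformly in $m$ and $r$. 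If $X_0$ is random, I would condition on $\mathcal F_0$, or assume $\mathbb E h(X_0)<\infty$ and replace $h(X_0)$ by its expectation.

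Next I remove the localization in $m$. On $[0,\rho_r]$ there are at most $r$ jumps, each increasing $h$ by at most $C_*$, and the flow does not increase $h$. Hence $\sup_{t\leq\rho_r}h(X_t)\leq h(X_0)+rC_*$, so $\eta_m\geq\rho_r$ once $m>h(X_0)+rC_*$, i.e.\ $\sigma_{m,r}=\rho_r$ for $m$ large (pathwise, since $h(X_0)<\infty$). Thus $h(X_{t\wedge\sigma_{m,r}})\to h(X_{t\wedge\rho_r})$ almost surely, and Fatou's lemma gives $\sup_{t\leq T}\mathbb E h(X_{t\wedge\rho_r})\leq C_T'$. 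This constant does not depend on $r$, because the bound $h(X_0)+rC_*$ was used only qualitatively, to identify the limit.

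For the spike count I compensate the counting measure. Applying Lemma~\ref{lem:elephant-stopped-poisson-martingale} with the integrand $\mathbf 1_{\{s<\sigma_{m,r}\}}\mathbf 1_{\{z\leq\phi_k(X_{s-}^k)\}}$ (its integrability follows from $\sum_k\phi_k\leq C_0+m$ before $\sigma_{m,r}$) gives
\[
\mathbb E N^{\mathrm{sp}}[0,T\wedge\sigma_{m,r}]=\mathbb E\int_0^T\mathbf 1_{\{u<\sigma_{m,r}\}}\sum_k\phi_k(X^k_{u-})\,du\leq \int_0^T(C_0+\psi_m(u))\,du\leq T(C_0+C_T').
\]
Letting $m\to\infty$, using $\sigma_{m,r}\uparrow\rho_r$ (eventually equal) and monotone convergence, yields $\mathbb E N^{\mathrm{sp}}[0,T\wedge\rho_r]\leq T(C_0+C_T')$. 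Taking $C_T:=\max\{C_T',T(C_0+C_T')\}$ gives both bounds.

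The main obstacle is the interchange of limits in $m$: it requires $\sigma_{m,r}=\rho_r$ eventually, and Gronwall requires $\psi_m$ to be finite. Both rest on the deterministic bound on $h$ over the first $r$ jumps. I expect this to be the delicate point, specifically the handling of the jump exactly at $\sigma_{m,r}$, which is absorbed by the bound $h\leq m+C_*$.
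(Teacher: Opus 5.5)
Your proposal is correct and follows essentially the same route as the paper. You apply Gronwall to Lemma~\ref{lem:elephant-localized-dynkin}, remove the $\eta_m$-localization via $\sigma_{m,r}=\rho_r$ for large $m$ together with Fatou's lemma, and control $N^{\mathrm{sp}}$ through its compensator using $\sum_k\phi_k\le C_0+h$. The differences are cosmetic: your Gronwall constant is slightly cruder, you give an explicit bound $h(X_0)+rC_*$ on $[0,\rho_r]$, and you compensate the spike count at level $\sigma_{m,r}$ before letting $m\to\infty$, whereas the paper compensates directly at $\rho_r$ using the first bound.
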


\begin{proof}
By Lemma~\ref{lem:elephant-localized-dynkin}, for every \(t\geq0\),
\begin{align*}
\mathbb E h(X_{t\wedge\sigma_{m,r}})
\leq{}&
h(X_0)
+
C_*C_0t
+
C_*
\int_0^t
\mathbb E h(X_{u\wedge\sigma_{m,r}})
\,du,
\end{align*}
where \(C_0\geq0\) and \(C_*\geq 0\) are constants independent of \(m\) and
\(r\).

Set
\[
F_{m,r}(t)
:=
\mathbb E h(X_{t\wedge\sigma_{m,r}}).
\]
Then
\[
F_{m,r}(t)
\leq
h(X_0)
+
C_*C_0t
+
C_*
\int_0^t F_{m,r}(u)\,du.
\]

Define
\[
G_{m,r}(t)
:=
F_{m,r}(t)+C_0.
\]
Then
\begin{align*}
G_{m,r}(t)
&\leq
h(X_0)+C_0
+
C_*
\int_0^t
\left(
F_{m,r}(u)+C_0
\right)du
\\
&=
h(X_0)+C_0
+
C_*
\int_0^t
G_{m,r}(u)\,du.
\end{align*}
Therefore, by Gronwall's inequality,
\[
G_{m,r}(t)
\leq
\left(
h(X_0)+C_0
\right)e^{C_*t}.
\]
Consequently,
\[
\mathbb E h(X_{t\wedge\sigma_{m,r}})
\leq
\left(
h(X_0)+C_0
\right)e^{C_*t}
-
C_0.
\]
When $C_*=0$, the preceding estimate is understood simply as
\[
\mathbb E h(X_{t\wedge\sigma_{m,r}})
\leq h(X_0).
\] For fixed $r$, the deterministic
flow does not increase $h$ and every one of the finitely many jumps before
$\rho_r$ has finite $h$-increment. Hence
$\sigma_{m,r}\uparrow\rho_r$ as $m\to\infty$, and Fatou's lemma gives the first
bound.

Furthermore,
\[
\sum_i\phi_i(X_t^i)\leq C_0+h(X_t).
\]
Therefore
\[
\begin{aligned}
\mathbb EN^{\mathrm{sp}}[0,T\wedge\rho_r]
&=
\mathbb E\int_0^{T\wedge\rho_r}\sum_i\phi_i(X_u^i)du
\leq
C_0T+
\int_0^T\mathbb Eh(X_{u\wedge\rho_r})\,du
\leq C_T.
\end{aligned}
\]
\end{proof}
\begin{lemma}
\label{lem:remove-lyapunov-localization}

For every $T>0$,
\begin{equation*}
\lim_{m\to\infty}
\sup_{r\in\mathbb N}
\mathbb P\left(
\eta_m\leq T\wedge\rho_r
\right)
=
0.
\end{equation*}
Consequently, for every fixed $r\in\mathbb N$,
\begin{equation*}
\sigma_{m,r}
=
\eta_m\wedge\rho_r
\uparrow
\rho_r
\qquad
\text{almost surely as }m\to\infty.
\end{equation*}

\end{lemma}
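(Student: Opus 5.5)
The plan is a Markov inequality applied at the stopping time $\eta_m\wedge T\wedge\rho_r$, using the $r$-uniform Gronwall bound from the proof of Lemma~\ref{lem:elephant-stopped-bounds}. The goal is to show that the event $\{\eta_m\le T\wedge\rho_r\}$ forces $h(X_{T\wedge\sigma_{m,r}})\ge m$, and that the expectation of this quantity is bounded independently of $m$ and $r$.

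\textbf{Step 1 (the event forces $h\ge m$).} On $\{\eta_m\le T\wedge\rho_r\}$ we have $\sigma_{m,r}=\eta_m\le T$, hence $X_{T\wedge\sigma_{m,r}}=X_{\eta_m}$. By right-continuity of the c\`adl\`ag path and the definition of $\eta_m$ as an infimum, $h(X_{\eta_m})\ge m$. Here $h$ is treated as lower semicontinuous along the path: between jumps $h$ is nonincreasing, so the level $m$ can only be crossed at a jump. At a spike time the post-jump value is attained and counts in the infimum.

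\textbf{Step 2 (uniform moment bound).} The proof of Lemma~\ref{lem:elephant-stopped-bounds} already yields
\[
\mathbb E\,h(X_{T\wedge\sigma_{m,r}})\le \bigl(h(X_0)+C_0\bigr)e^{C_*T}=:K_T,
\]
for all $m>h(X_0)$ and all $r$. This bound is uniform in $m$ and $r$.

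\textbf{Step 3 (Markov).} Combining the two steps,
\[
\sup_{r\in\mathbb N}\mathbb P(\eta_m\le T\wedge\rho_r)\le \sup_r\mathbb P\bigl(h(X_{T\wedge\sigma_{m,r}})\ge m\bigr)\le K_T/m,
\]
which tends to $0$ as $m\to\infty$. This proves the first claim.

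\textbf{Step 4 (the consequence).} Fix $r$. The sequence $\eta_m$ is nondecreasing in $m$, so $\sigma_{m,r}$ increases to some limit $\sigma_r\le\rho_r$. Suppose $\sigma_r<\rho_r$ on a set of positive probability. Then on that set $\eta_m<\rho_r$ for all $m$. Since $\rho_r$ is finite only if $r$ spikes occur, we intersect with $\{\rho_r\le T\}$ or with $\{\eta_m\le T\}$ for some large $T$, using $\bigcup_T$. This gives $\mathbb P(\eta_m\le T\wedge\rho_r,\ \forall m)>0$ for some $T$, contradicting Step 3.

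Alternatively, Step 4 can be argued pathwise. On $[0,\rho_r)$ there are fewer than $r$ jumps, each raising $h$ by at most $C_*$, and the flow does not increase $h$. Hence $\sup_{t<\rho_r}h(X_t)\le h(X_0)+rC_*$, so $\eta_m\ge\rho_r$ once $m>h(X_0)+rC_*$. The jump at $\rho_r$ itself is handled the same way, using $\le r$ jumps on $[0,\rho_r]$. This deterministic argument in fact gives $\sigma_{m,r}=\rho_r$ for large $m$, and I would present it as the main proof of the "consequently" part.

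\textbf{Main obstacle.} The only delicate point is Step 1: justifying $h(X_{\eta_m})\ge m$, and checking that the Dynkin/Gronwall bound of Lemma~\ref{lem:elephant-localized-dynkin} legitimately applies at time $T\wedge\sigma_{m,r}$ including the final jump. This follows because the jump-increment bound of Proposition~\ref{prop:local-construction} keeps $h$ finite after the last jump, so the expectation bound is genuinely attained at the stopped value.
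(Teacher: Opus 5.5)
Your proposal is correct and follows the paper's proof. The paper also bounds $\mathbf 1_{\{\eta_m\le T\wedge\rho_r\}}\le h(X_{T\wedge\sigma_{m,r}})/m$, takes expectations using the Gronwall estimate from Lemma~\ref{lem:elephant-stopped-bounds} (uniform in $m$ and $r$), and then argues pathwise that only finitely many jumps occur before $\rho_r$. Your bound $\sup_{t\le\rho_r}h(X_t)\le h(X_0)+rC_*$ is a quantitative version of the paper's finiteness claim, and your right-continuity justification of $h(X_{\eta_m})\ge m$ is slightly more careful than the paper's.
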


\begin{proof}
On the event
$\left\{
\eta_m\leq T\wedge\rho_r
\right\},
$ 
we have
\begin{equation*}
T\wedge\sigma_{m,r}
=
T\wedge\eta_m\wedge\rho_r
=
\eta_m,
\end{equation*}
and therefore
\begin{equation*}
h\left(
X_{T\wedge\sigma_{m,r}}
\right)
=
h(X_{\eta_m})
\geq
m.
\end{equation*}
Hence,
\begin{equation*}
\mathbf 1_{\{\eta_m\leq T\wedge\rho_r\}}
\leq
\frac{1}{m}
h\left(
X_{T\wedge\sigma_{m,r}}
\right).
\end{equation*}
Taking expectations gives
\begin{align*}
\mathbb P\left(
\eta_m\leq T\wedge\rho_r
\right)
&\leq
\frac{1}{m}
\mathbb E
h\left(
X_{T\wedge\sigma_{m,r}}
\right).
\end{align*}
By the stopped Lyapunov estimate,
\begin{equation*}
\mathbb E
h\left(
X_{T\wedge\sigma_{m,r}}
\right)
\leq
C_T,
\end{equation*}
where $C_T<\infty$ is independent of $m$ and $r$. Consequently,
\begin{equation*}
\mathbb P\left(
\eta_m\leq T\wedge\rho_r
\right)
\leq
\frac{C_T}{m}.
\end{equation*}
Since the right-hand side is independent of $r$,
\begin{equation*}
\lim_{m\to\infty}
\sup_{r\in\mathbb N}
\mathbb P\left(
\eta_m\leq T\wedge\rho_r
\right)
=
0.
\end{equation*}

For fixed $r$, before time $\rho_r$ only finitely many spikes occur. Between
spikes, the deterministic flow does not increase $h$, and each jump produces
a finite increment of $h$. Hence
\begin{equation*}
\sup_{0\leq t<\rho_r}
h(X_t)
<
\infty
\qquad
\text{almost surely}.
\end{equation*}
Therefore, for every fixed $r$,
\begin{equation*}
\eta_m
>
\rho_r
\end{equation*}
for all sufficiently large $m$, almost surely. It follows that
\begin{equation*}
\sigma_{m,r}
=
\eta_m\wedge\rho_r
\uparrow
\rho_r
\qquad
\text{almost surely}.
\end{equation*}
\end{proof}
\begin{theorem}
\label{thm:elephant-nonexplosion}
Assume \eqref{eq:L-assumption} and \eqref{eq:bounded-influence}, and let
$X_0\in\mathcal X_h$. Then the minimal c\`adl\`ag solution is non-explosive:
\[
\mathbb P(\tau_\infty\leq T)=0,
\qquad T>0.
\]
Consequently it extends uniquely to a global c\`adl\`ag process. Moreover,
\[
\mathbb EN^{\mathrm{sp}}[0,T]<\infty,
\qquad T>0.
\]
\end{theorem}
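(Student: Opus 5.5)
The plan is to turn the uniform-in-$r$ spike bound of Lemma~\ref{lem:elephant-stopped-bounds} into non-explosion via a Markov-type argument on the event $\{\tau_\infty\leq T\}$, and then to let $r\to\infty$ by monotone convergence.

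First, I observe that on the event $\{\tau_\infty\leq T\}$ we have $\rho_r<\tau_\infty\leq T$ for every $r$. Indeed, $\rho_r$ is the $r$-th spike time, and these times strictly increase to $\tau_\infty$. Hence $T\wedge\rho_r=\rho_r$, and by definition of $\rho_r$ (the process is càdlàg, so the $r$-th spike occurs at $\rho_r$) we get
\[
N^{\mathrm{sp}}[0,T\wedge\rho_r]\geq r .
\]
Therefore
\[
\mathbf 1_{\{\tau_\infty\leq T\}}\leq \frac1r\,N^{\mathrm{sp}}[0,T\wedge\rho_r].
\]
Taking expectations and invoking Lemma~\ref{lem:elephant-stopped-bounds} gives $\mathbb P(\tau_\infty\leq T)\leq C_T/r$ for every $r\in\mathbb N$. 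Since $C_T$ does not depend on $r$, letting $r\to\infty$ yields $\mathbb P(\tau_\infty\leq T)=0$.

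Second, taking a union over $T\in\mathbb N$ gives $\tau_\infty=\infty$ almost surely. The recursive construction of Proposition~\ref{prop:local-construction} then defines $Y_t$ for all $t\geq0$. Uniqueness is inherited from the pathwise uniqueness up to $\tau_\infty$ stated there: any two càdlàg solutions agree on $[0,\tau_\infty)=[0,\infty)$.

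Third, for the spike count, $\rho_r\uparrow\tau_\infty=\infty$ almost surely, so $T\wedge\rho_r\uparrow T$. Along every path, $N^{\mathrm{sp}}[0,T\wedge\rho_r]$ is nondecreasing in $r$ and eventually equal to $N^{\mathrm{sp}}[0,T]$, because only finitely many spikes occur in $[0,T]$ once $\tau_\infty>T$. Monotone convergence combined with Lemma~\ref{lem:elephant-stopped-bounds} then gives $\mathbb EN^{\mathrm{sp}}[0,T]\leq C_T<\infty$.

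The main point requiring care is the uniformity of $C_T$ in $r$ in Lemma~\ref{lem:elephant-stopped-bounds}. That lemma obtains it from the Gronwall bound combined with Fatou's lemma along $\sigma_{m,r}\uparrow\rho_r$ (Lemma~\ref{lem:remove-lyapunov-localization}). The compensator identity $\mathbb EN^{\mathrm{sp}}[0,T\wedge\rho_r]=\mathbb E\int_0^{T\wedge\rho_r}\sum_i\phi_i(X_u^i)\,du$ is legitimate because the count is bounded by $r$, so the stopped counting process minus its compensator is a genuine martingale. Given these, the argument above is just a Markov inequality.
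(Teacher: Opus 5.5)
Your proposal is correct and follows the paper's proof essentially verbatim: a Markov bound of order $C_T/r$ from the $r$-uniform spike estimate of Lemma~\ref{lem:elephant-stopped-bounds}, followed by monotone convergence to get $\mathbb E N^{\mathrm{sp}}[0,T]\leq C_T$. The only cosmetic difference is that you bound $\mathbf 1_{\{\tau_\infty\leq T\}}$ directly via $\{\tau_\infty\leq T\}\subset\{\rho_r\leq T\}$, whereas the paper bounds $\mathbb P(\rho_r\leq T)$ and then uses continuity from above along the decreasing events $\{\rho_r\leq T\}\downarrow\{\tau_\infty\leq T\}$.
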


\begin{proof}
Fix \(T>0\). On the event \(\{\rho_r\leq T\}\), at least \(r\) spikes have
occurred by time \(T\wedge\rho_r=\rho_r\). Hence,
\begin{equation*}
N^{\mathrm{sp}}[0,T\wedge\rho_r]\geq r
\qquad
\text{on }\{\rho_r\leq T\}.
\end{equation*}
Therefore,
\begin{equation*}
\mathbf 1_{\{\rho_r\leq T\}}
\leq
\frac{1}{r}
N^{\mathrm{sp}}[0,T\wedge\rho_r].
\end{equation*}
Taking expectations and using
Lemma~\ref{lem:elephant-stopped-bounds}, we obtain
\begin{align*}
\mathbb P(\rho_r\leq T)
=
\mathbb E\mathbf 1_{\{\rho_r\leq T\}}
\leq
\frac{1}{r}
\mathbb E
N^{\mathrm{sp}}[0,T\wedge\rho_r]
\leq
\frac{C_T}{r}.
\end{align*}
Consequently,
\begin{equation*}
\mathbb P(\rho_r\leq T)
\longrightarrow
0
\qquad
\text{as }r\to\infty.
\end{equation*}

Since \(\rho_r\uparrow\tau_\infty\), the events
$\{\rho_r\leq T\}
$ form a decreasing sequence and satisfy
\begin{equation*}
\bigcap_{r=1}^{\infty}
\{\rho_r\leq T\}
=
\{\tau_\infty\leq T\}.
\end{equation*}
Hence, by continuity of probability from above,
\begin{align*}
\mathbb P(\tau_\infty\leq T)
=
\lim_{r\to\infty}
\mathbb P(\rho_r\leq T)
=
0.
\end{align*}
Since \(T>0\) is arbitrary,
\begin{equation*}
\mathbb P(\tau_\infty<\infty)=0.
\end{equation*}
Thus, the minimal c\`adl\`ag solution is defined for all \(t\geq0\) almost
surely. Since the local construction is unique up to the explosion time, it
extends uniquely to a global c\`adl\`ag process.

Finally, because
\begin{equation*}
T\wedge\rho_r\uparrow T
\end{equation*}
almost surely and the spike-counting process is nondecreasing,
\begin{equation*}
N^{\mathrm{sp}}[0,T\wedge\rho_r]
\uparrow
N^{\mathrm{sp}}[0,T]
\end{equation*}
almost surely. Therefore, by the monotone convergence theorem,
\begin{align*}
\mathbb E N^{\mathrm{sp}}[0,T]
&=
\lim_{r\to\infty}
\mathbb E N^{\mathrm{sp}}[0,T\wedge\rho_r]
\leq
C_T
<
\infty.
\end{align*}
\end{proof}

\section{Exact moment identities for the reinforcement variables}
\label{subsec:reinforcement-moment-identities}
The identities derived below are continuous-time network analogues of the
classical moment recursions for the Elephant random walk. In the standard
discrete-time model, if \(S_n\) denotes the position after \(n\) steps, then
the conditional mean of the next increment satisfies
\begin{equation*}
\mathbb E
\left[
X_{n+1}
\,\middle|\,
\mathcal F_n
\right]
=
(2p-1)\frac{S_n}{n}.
\end{equation*}
This identity is the starting point for exact calculations of the first and
second moments and for the martingale analysis of the Elephant random walk;
see \cite{SchutzTrimper2004,Coletti2017a,Coletti2017b,Bercu2017}.

In the present neuronal system, the pair \((N_t^j,S_t^j)\) records the number
and the signed sum of the Elephant updates received by neuron \(j\). The same
conditional sign identity remains valid, but reinforcement updates occur in
continuous time at the state-dependent incoming rate
\begin{equation*}
\Lambda_j(x)
:=
\sum_{k\in\mathcal P_j}\phi_k(x^k).
\end{equation*}
Consequently, the classical discrete-time moment recursion is replaced by a
generator identity in which the Elephant conditional drift is multiplied by
the instantaneous incoming spike rate.
We now derive exact evolution equations for the reinforcement counter $N_t^j$
and the signed reinforcement variable $S_t^j$.

For every postsynaptic neuron $j\in\mathbb N$, define its set of presynaptic
neurons by
\begin{equation*}
\mathcal P_j
:=
\left\{
k\in\mathbb N:
j\in\mathcal N_k
\right\}.
\end{equation*}
Thus, $k\in\mathcal P_j$ precisely when a spike of neuron $k$ produces a
synaptic update at neuron $j$.

We also define the total incoming firing intensity at neuron $j$ by
\begin{equation}
\Lambda_j(x)
:=
\sum_{k\in\mathcal P_j}\phi_k(x^k).
\label{eq:incoming-firing-intensity}
\end{equation}

Since
\begin{equation*}
\Lambda_j(x)
\leq
\sum_{k=1}^{\infty}\phi_k(x^k),
\end{equation*}
the summability assumptions used in the non-explosion theorem imply that
$\Lambda_j(X_t)$ is integrable on every bounded time interval.

Recall the normalized reinforcement variable
\begin{equation*}
r_j(n,s)
:=
\begin{cases}
\dfrac{s_j}{n_j},&n_j\geq1, \\
0,&n_j=0.
\end{cases}
\end{equation*}
In particular,
\begin{equation*}
|r_j(n,s)|\leq1.
\end{equation*}

\begin{proposition} 
\label{prop:finite-reinforcement-moments}
Assume the hypotheses of
Theorem~\ref{thm:elephant-nonexplosion}. Fix $j\in\mathbb N$ and suppose that
\begin{equation*}
\mathbb E\left(
N_0^j+|S_0^j|
\right)
<\infty.
\end{equation*}
Then, for every $T>0$,
\begin{equation}
\sup_{0\leq t\leq T}
\mathbb E\left(
N_t^j+|S_t^j|
\right)
<\infty.
\label{eq:finite-reinforcement-first-moments}
\end{equation}
\end{proposition}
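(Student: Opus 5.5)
Since $|S_t^j|\leq N_t^j$ by the admissibility \eqref{defN}, it suffices to bound $\sup_{t\leq T}\mathbb E N_t^j$; then $\mathbb E(N_t^j+|S_t^j|)\leq 2\,\mathbb E N_t^j$. The counter $N^j$ is nondecreasing and increases by exactly one at each spike of a presynaptic neuron $k\in\mathcal P_j$, and it does not move otherwise. Pathwise,
\[
N_t^j = N_0^j+\sum_{k\in\mathcal P_j}N_k^{\mathrm{sp}}[0,t]\leq N_0^j+N^{\mathrm{sp}}[0,t].
\]
Since the right-hand side is nondecreasing in $t$, we get
\[
\sup_{0\leq t\leq T}\mathbb E\bigl(N_t^j+|S_t^j|\bigr)\leq 2\,\mathbb E N_0^j+2\,\mathbb E N^{\mathrm{sp}}[0,T].
\]
The first term is finite by hypothesis. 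The second is finite by the last assertion of Theorem~\ref{thm:elephant-nonexplosion}.

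The key steps, in order, are the following.
\begin{enumerate}
\item Verify, from the construction of the post-jump map $\Delta_k^N$, the pathwise identity relating $N^j$ to the presynaptic spike counts. This uses the fact that the process is globally defined, which holds by non-explosion.
\item Use $|S^j_t|\leq N^j_t$ from \eqref{defN}.
\item Invoke $\mathbb EN^{\mathrm{sp}}[0,T]\leq C_T$.
\end{enumerate}
If one wants a formulation closer to the generator, one can alternatively write, by the compensation formula,
\[
\mathbb E N_t^j=\mathbb E N_0^j+\mathbb E\int_0^t\Lambda_j(X_u)\,du
\]
(localized at $\rho_r$ and passed to the limit by monotone convergence). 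The integral is then bounded by $\int_0^t(C_0+\mathbb E h(X_u))\,du$, which is finite by Lemma~\ref{lem:elephant-stopped-bounds} together with Fatou's lemma.

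The only point requiring care, and the main potential obstacle, is the independence of the constant $C_T$ from the initial data. In Lemma~\ref{lem:elephant-stopped-bounds}, $h(X_0)$ is treated as deterministic. If $X_0$ is random, one would argue conditionally on $Y_0$ and then integrate, which requires $\mathbb E h(X_0)<\infty$. Under the stated hypotheses I would take $X_0$ deterministic (or condition on it), as in the theorem. The reinforcement initial data $N_0^j,S_0^j$ enter only additively, so their integrability is exactly what is assumed.
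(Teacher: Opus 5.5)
Your proposal is correct and is essentially the paper's argument: you dominate $N_t^j$ pathwise by $N_0^j+N^{\mathrm{sp}}[0,T]$ and invoke $\mathbb E N^{\mathrm{sp}}[0,T]<\infty$ from Theorem~\ref{thm:elephant-nonexplosion}. The only cosmetic difference is that you control $|S_t^j|$ through the preserved admissibility $|S_t^j|\leq N_t^j$, whereas the paper uses the separate bound $|S_t^j|\leq |S_0^j|+N^{\mathrm{sp}}[0,t]$; both give the conclusion.
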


\begin{proof}
Let $N^{\mathrm{sp}}[0,t]$ denote the total number of neuronal spikes up to time
$t$. Each spike can increase $N^j$ by at most one. Consequently,
\begin{equation*}
N_t^j
\leq
N_0^j+N^{\mathrm{sp}}[0,t].
\end{equation*}
Moreover, every update of $S^j$ has magnitude one. Hence
\begin{equation*}
|S_t^j|
\leq
|S_0^j|+N^{\mathrm{sp}}[0,t].
\end{equation*}
The non-explosion theorem gives
\begin{equation*}
\mathbb E\left(
N^{\mathrm{sp}}[0,T]
\right)
<\infty.
\end{equation*}
Therefore,
\begin{align*}
\sup_{0\leq t\leq T}
\mathbb E\left(
N_t^j+|S_t^j|
\right)
\leq
\mathbb E\left(
N_0^j+|S_0^j|
\right)
+
2\mathbb E\left(
N^{\mathrm{sp}}[0,T]
\right)
<\infty.
\end{align*}
\end{proof}

\begin{theorem} 
\label{thm:exact-reinforcement-moment-identities}
Assume the hypotheses of
Theorem~\ref{thm:elephant-nonexplosion}. Fix $j\in\mathbb N$ and suppose that
\begin{equation*}
\mathbb E\left(
N_0^j+|S_0^j|
\right)
<\infty.
\end{equation*}
Then the functions
\begin{equation*}
t\longmapsto\mathbb E\left(N_t^j\right)
\qquad\text{and}\qquad
t\longmapsto\mathbb E\left(S_t^j\right)
\end{equation*}
are absolutely continuous on every bounded time interval.

For almost every $t\geq0$,
\begin{equation}
\frac{d}{dt}
\mathbb E\left(N_t^j\right)
=\mathbb E\left(
\Lambda_j(X_t)
\right),
\label{eq:moment-identity-N}
\end{equation}
and
\begin{equation}
\frac{d}{dt}
\mathbb E\left(S_t^j\right)
=
(2p-1)
\mathbb E\left(
r_j(N_t,S_t)\Lambda_j(X_t)
\right).
\label{eq:moment-identity-S}
\end{equation}
where \begin{equation*}
\Lambda_j(x)
:=
\sum_{k\in\mathcal P_j}\phi_k(x^k).
\end{equation*}
Equivalently, for every $t\geq0$,
\begin{equation}
\mathbb E\left(N_t^j\right)
=
\mathbb E\left(N_0^j\right)
+
\int_0^t
\mathbb E\left(
\Lambda_j(X_u)
\right)du,
\label{eq:integrated-moment-identity-N}
\end{equation}
and
\begin{align}
\mathbb E\left(S_t^j\right)
=
\mathbb E\left(S_0^j\right)
+
(2p-1)
\int_0^t
\mathbb E\left(
r_j(N_u,S_u)\Lambda_j(X_u)
\right)du.
\label{eq:integrated-moment-identity-S}
\end{align}
\end{theorem}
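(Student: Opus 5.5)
The plan is to write $N_t^j$ and $S_t^j$ as stochastic integrals against the marked point measures $N^k(ds,dz,d\varepsilon_k)$. Then we compensate, and use the martingale property supplied by the integrability bounds of Theorem~\ref{thm:elephant-nonexplosion}. Since $N^j$ changes only when some $k\in\mathcal P_j$ spikes, we have pathwise, for all $t\ge0$ (the process being global by Theorem~\ref{thm:elephant-nonexplosion}),
\begin{align*}
N_t^j&=N_0^j+\sum_{k\in\mathcal P_j}\int_0^t\int_0^\infty\int_{\mathcal E_k}\mathbf 1_{\{z\leq\phi_k(X_{s-}^k)\}}\,N^k(ds,dz,d\varepsilon_k),\\
S_t^j&=S_0^j+\sum_{k\in\mathcal P_j}\int_0^t\int_0^\infty\int_{\mathcal E_k}\varepsilon_{kj}\mathbf 1_{\{z\leq\phi_k(X_{s-}^k)\}}\,N^k(ds,dz,d\varepsilon_k).
\end{align*}
Both integrands are predictable in the sense used in Lemma~\ref{lem:predictable_integrable_integrands}, and they are bounded in absolute value by $\mathbf 1_{\{z\leq\phi_k(X_{s-}^k)\}}$.

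Next I would check integrability of the compensators. Summing over $k\in\mathcal P_j$ and integrating in $z$ and $\varepsilon_k$, the total compensator mass on $[0,T]$ is at most
\[
\mathbb E\int_0^T\Lambda_j(X_s)\,ds\le \mathbb E\int_0^T\sum_k\phi_k(X_s^k)\,ds=\mathbb E N^{\mathrm{sp}}[0,T]<\infty,
\]
where the equality is the compensation formula for the spike counts, used in the proof of Lemma~\ref{lem:elephant-stopped-bounds} and finite by Theorem~\ref{thm:elephant-nonexplosion}. Under this condition the argument of Lemma~\ref{lem:elephant-stopped-poisson-martingale} applies verbatim, with the localization indicator dropped since the bound is now global. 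It shows that the compensated integrals are integrable martingales with mean zero.

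Taking expectations therefore replaces $N^k$ by its compensator $ds\,dz\,Q_k^{N_{s-},S_{s-}}(d\varepsilon_k)$. For $N^j$, the $z$-integral gives $\phi_k(X_{s-}^k)$ and the $\varepsilon$-integral gives $1$; summing over $k$ yields $\mathbb E\int_0^t\Lambda_j(X_{s-})\,ds$. For $S^j$, the $\varepsilon$-integral gives $\int\varepsilon_{kj}\,Q_j^{N_{s-},S_{s-}}(d\varepsilon_{kj})=(2p-1)r_j(N_{s-},S_{s-})$ by Proposition~\ref{prop:conditional-elephant-sign-drift}. This uses only the one-dimensional marginal of the product law, and notably the value does not depend on $k$. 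The result is the integrand $(2p-1)r_j(N_{s-},S_{s-})\Lambda_j(X_{s-})$. Because the process has countably many jumps, $Y_{s-}=Y_s$ for Lebesgue-a.e.\ $s$, so left limits can be replaced by $X_u,N_u,S_u$. Fubini is justified since $|r_j|\le1$ and $\mathbb E\int_0^T\Lambda_j<\infty$. This gives \eqref{eq:integrated-moment-identity-N} and \eqref{eq:integrated-moment-identity-S}. The expectations $\mathbb E N_t^j$ and $\mathbb E S_t^j$ are finite by Proposition~\ref{prop:finite-reinforcement-moments}.

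Finally, $u\mapsto\mathbb E\Lambda_j(X_u)$ and $u\mapsto\mathbb E[r_j\Lambda_j]$ are in $L^1[0,T]$ by the bound above. The integral representations then give absolute continuity on bounded intervals, and Lebesgue differentiation gives \eqref{eq:moment-identity-N} and \eqref{eq:moment-identity-S} for a.e.\ $t$. The main technical point is the martingale step. Because $\mathcal P_j$ may be infinite, one must sum countably many compensated integrals; I would follow the $L^1$-Cauchy argument of Lemma~\ref{lem:elephant-stopped-poisson-martingale}, with the global bound on $\mathbb E N^{\mathrm{sp}}[0,T]$ replacing localization. If that is not quite enough, I would localize at $\rho_r$, obtain the identities up to $t\wedge\rho_r$, and pass to the limit $r\to\infty$. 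That limit uses monotone convergence for $N^j$, and dominated convergence for $S^j$ with dominating function $|S_0^j|+N^{\mathrm{sp}}[0,T]$.
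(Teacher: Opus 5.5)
Your proof is correct. It rests on the same key estimate as the paper's, namely $\mathbb E\int_0^T\Lambda_j(X_s)\,ds\le\mathbb E N^{\mathrm{sp}}[0,T]<\infty$, but the argument is organized differently.

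The paper works at the generator level. It applies Dynkin's formula to the bounded truncations $n_j\wedge R$ and $\chi_R(s_j)=(-R)\vee(s_j\wedge R)$, and computes $\mathcal L(n_j\wedge R)=\mathbf 1_{\{n_j<R\}}\Lambda_j(x)$. It then bounds the jump compensators of both truncations by $\Lambda_j$ and removes the truncation by dominated convergence, using Proposition~\ref{prop:finite-reinforcement-moments} for the left-hand sides.

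You instead represent $N_t^j$ and $S_t^j$ directly as integrals against the marked measures $N^k$ and compensate the untruncated coordinates. Because the integrands are bounded by $\mathbf 1_{\{z\le\phi_k(X_{s-}^k)\}}$, you need no truncation and no limit in $R$. You also treat explicitly the countable sum over $\mathcal P_j$, which the paper covers only with the phrase ``Dynkin's formula applies'', as well as the Fubini step and the replacement of $Y_{s-}$ by $Y_s$. The paper's truncation template carries over more directly to nonlinear test functions of $(n,s)$, for instance second moments, whose jumps are unbounded. Yours is shorter for the linear coordinates at hand.

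The step you flag as the main difficulty can be bypassed. Write $\varepsilon_{kj}=\mathbf 1_{\{\varepsilon_{kj}=+1\}}-\mathbf 1_{\{\varepsilon_{kj}=-1\}}$, so every integrand is nonnegative and predictable. The compensation formula for each $N^k$, summed over $k\in\mathcal P_j$ by Tonelli, then gives the expected numbers of $+1$ and $-1$ updates received by neuron $j$. Both are finite because they are dominated by $\mathbb E N^{\mathrm{sp}}[0,t]$. Their sum gives the $N^j$ identity and their difference gives the $S^j$ identity. With this, neither the $L^1$-Cauchy argument nor the fallback localization at $\rho_r$ is required.
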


\begin{proof}
The coordinate functions
\begin{equation*}
f_N(x,n,s):=n_j
\qquad\text{and}\qquad
f_S(x,n,s):=s_j
\end{equation*}
are unbounded. We therefore apply the generator first to bounded
approximations.

For $R\geq1$, define
\begin{equation*}
f_{N,R}(x,n,s)
:=
n_j\wedge R
\end{equation*}
and
\begin{equation*}
f_{S,R}(x,n,s)
:=
\chi_R(s_j),
\end{equation*}
where
\begin{equation*}
\chi_R(z)
:=
(-R)\vee(z\wedge R).
\end{equation*}

The deterministic flow acts only on the membrane-potential coordinates.
Consequently, it contributes nothing to either $f_{N,R}$ or $f_{S,R}$.

If neuron $k$ spikes, then $N^j$ changes only when
 $k\in\mathcal P_j.
$ In that case,
\begin{equation*}
N^j\longmapsto N^j+1.
\end{equation*}
Therefore, before truncation,
\begin{equation}
\mathcal L f_N(x,n,s)
=
\sum_{k\in\mathcal P_j}
\phi_k(x^k)
=
\Lambda_j(x).
\label{eq:generator-N-coordinate}
\end{equation}

Similarly, if $k\in\mathcal P_j$, then
\begin{equation*}
S^j\longmapsto S^j+\varepsilon_{kj}.
\end{equation*}
Let
$f_S(x,n,s):=s_j.
$ Since \(f_S\) does not depend on the membrane-potential variables, the
deterministic part of the generator vanishes. Moreover, if neuron \(k\)
spikes, then
\begin{equation*}
f_S\bigl(\Delta_k^{\varepsilon_k}(x,n,s)\bigr)-f_S(x,n,s)
=
\begin{cases}
\varepsilon_{kj}, & k\in\mathcal P_j,\\
0, & k\notin\mathcal P_j.
\end{cases}
\end{equation*}
Therefore,
\begin{align*}
\mathcal L f_S(x,n,s)
&=
\sum_{k=1}^{\infty}
\phi_k(x^k)
\mathbb E_{\varepsilon_k}^{n,s}
\left[
f_S\bigl(\Delta_k^{\varepsilon_k}(x,n,s)\bigr)
-
f_S(x,n,s)
\right]
\\
&=
\sum_{k\in\mathcal P_j}
\phi_k(x^k)
\mathbb E
\left[
\varepsilon_{kj}
\,\middle|\,
x,n,s
\right].
\end{align*}

Using Proposition~\ref{prop:conditional-elephant-sign-drift}, we obtain
\begin{align}
\mathcal L f_S(x,n,s)
&=
\sum_{k\in\mathcal P_j}
\phi_k(x^k)
\mathbb E
\left[
\varepsilon_{kj}
\,\middle|\,
x,n,s
\right]
\nonumber\\
&=
(2p-1)r_j(n,s)
\sum_{k\in\mathcal P_j}
\phi_k(x^k)
\nonumber\\
&=
(2p-1)r_j(n,s)\Lambda_j(x),
\label{eq:generator-S-coordinate}
\end{align}

For the truncated counting function
\[
f_{N,R}(x,n,s)
=
n_j\wedge R,
\]
the generator is not exactly equal to $\Lambda_j(x)$. Indeed, if neuron
$k\in\mathcal P_j$ spikes, then
$n_j\longmapsto n_j+1,
$ and therefore
\[
(n_j+1)\wedge R-n_j\wedge R
=
\mathbf 1_{\{n_j<R\}}.
\]
Hence
\[
\mathcal L f_{N,R}(x,n,s)
=
\mathbf 1_{\{n_j<R\}}
\Lambda_j(x).
\]

Similarly, for
\[
f_{S,R}(x,n,s)
=
\chi_R(s_j),
\]
where
\[
\chi_R(z)
=
(-R)\vee(z\wedge R),
\]
the jump increment associated with a spike of
$k\in\mathcal P_j$ is
$\chi_R(s_j+\varepsilon_{kj})
-
\chi_R(s_j),
$ 
which is not equal to $\varepsilon_{kj}$ when $s_j$ lies at the truncation
boundary. Nevertheless,
\[
\left|
\chi_R(s_j+\varepsilon_{kj})
-
\chi_R(s_j)
\right|
\leq 1.
\]
Likewise,
\[
\left|
(n_j+1)\wedge R
-
n_j\wedge R
\right|
\leq 1.
\]

Consequently, the absolute values of the jump compensators of both truncated
functions are bounded by
\[
\Lambda_j(X_t)
=
\sum_{k\in\mathcal P_j}
\phi_k(X_t^k).
\]
Since
\[
\mathbb E
\int_0^T
\Lambda_j(X_u)\,du
\leq
\mathbb E
\int_0^T
\sum_{k=1}^{\infty}
\phi_k(X_u^k)\,du
=
\mathbb E
N^{\mathrm{sp}}[0,T]
<
\infty,
\]
Dynkin's formula applies to both truncated functions.

Moreover, as $R\to\infty$,
\[
\mathbf 1_{\{N_t^j<R\}}
\longrightarrow 1
\]
almost surely, and
\[
\chi_R(S_t^j+\varepsilon_{kj})
-
\chi_R(S_t^j)
\longrightarrow
\varepsilon_{kj}
\]
almost surely. Consequently, for every fixed state $(n,s)$,
\[
\mathbb E_{\varepsilon_k}^{n,s}
\left[
\chi_R(s_j+\varepsilon_{kj})-\chi_R(s_j)
\right]
\longrightarrow
\mathbb E_{\varepsilon_k}^{n,s}
\left[
\varepsilon_{kj}
\right]
=
(2p-1)r_j(n,s).
\] The preceding bound by $\Lambda_j(X_t)$ therefore allows
dominated convergence to be applied to the compensator terms.  
Letting $R$ tend to infinity, Proposition~\ref{prop:finite-reinforcement-moments}
and dominated convergence yield
\begin{equation*}
\mathbb E\left(N_t^j\right)
=
\mathbb E\left(N_0^j\right)
+
\int_0^t
\mathbb E\left(
\Lambda_j(X_u)
\right)du
\end{equation*}
and
\begin{align*}
\mathbb E\left(S_t^j\right)
=
\mathbb E\left(S_0^j\right)
+
(2p-1)
\int_0^t
\mathbb E\left(
r_j(N_u,S_u)\Lambda_j(X_u)
\right)du.
\end{align*}
The integrands are locally integrable. The two expectation functions are
therefore absolutely continuous, and differentiation gives
\eqref{eq:moment-identity-N} and \eqref{eq:moment-identity-S} for almost every
$t\geq0$.
\end{proof}

\begin{corollary}[Neutral reinforcement]
\label{cor:neutral-reinforcement-moment}
If $
p=\frac12,
$ 
then
\begin{equation}
\mathbb E\left(S_t^j\right)
=
\mathbb E\left(S_0^j\right),
\qquad
t\geq0.
\label{eq:neutral-reinforcement-mean}
\end{equation}
Thus, in the memory-neutral regime, the signed reinforcement variable is
constant in expectation, although its sample paths continue to fluctuate.
\end{corollary}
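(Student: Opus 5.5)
This is an immediate consequence of Theorem~\ref{thm:exact-reinforcement-moment-identities}, so the plan is simply to specialize the integrated identity \eqref{eq:integrated-moment-identity-S} to $p=\frac12$. The hypotheses of that theorem are the standing assumptions here: those of Theorem~\ref{thm:elephant-nonexplosion}, together with $\mathbb E(N_0^j+|S_0^j|)<\infty$. Under them, for every $t\geq0$,
\begin{equation*}
\mathbb E\left(S_t^j\right)
=
\mathbb E\left(S_0^j\right)
+
(2p-1)\int_0^t\mathbb E\left(r_j(N_u,S_u)\Lambda_j(X_u)\right)du .
\end{equation*}

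The first step is to check that the integral term is a finite real number, so that multiplying it by zero is legitimate. Since $|r_j|\leq 1$, the integrand is bounded in absolute value by $\mathbb E\,\Lambda_j(X_u)$. Moreover,
\begin{equation*}
\int_0^t \mathbb E\,\Lambda_j(X_u)\,du\leq \mathbb E N^{\mathrm{sp}}[0,t]<\infty,
\end{equation*}
by Theorem~\ref{thm:elephant-nonexplosion}, exactly as used in the proof of Theorem~\ref{thm:exact-reinforcement-moment-identities}. Setting $2p-1=0$ then removes the integral term and yields \eqref{eq:neutral-reinforcement-mean} for every $t$, not just almost every $t$, because we use the integrated form rather than the derivative identity.

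Alternatively, one can argue directly. When $p=\frac12$, Proposition~\ref{prop:conditional-elephant-sign-drift} gives $\mathbb E[\varepsilon_{kj}\mid x,n,s]=0$, so $\mathcal L f_S\equiv 0$ by \eqref{eq:generator-S-coordinate}. Hence $S_t^j-S_0^j$ is a martingale, once integrability is supplied by Proposition~\ref{prop:finite-reinforcement-moments}.

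The sample-path remark in the corollary needs only a short justification. Each update of $S^j$ is $\pm1$ with probability $\frac12$. Hence, whenever $\Lambda_j$ is not identically zero along the trajectory, the paths of $S^j$ genuinely fluctuate.

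There is no real obstacle. The only point needing care is the finiteness of the integral term, and that is already covered by the non-explosion bound.
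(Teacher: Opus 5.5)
Your proposal is correct and matches the paper's proof, which simply notes that the coefficient $2p-1$ in \eqref{eq:integrated-moment-identity-S} vanishes when $p=\frac12$. Your check that the integral term is finite, bounding it by $\mathbb E N^{\mathrm{sp}}[0,t]$, makes explicit a point the paper leaves implicit; the martingale argument and the sample-path remark are optional extras.
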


\begin{proof}
When $p=\frac12$, the coefficient $2p-1$ in
\eqref{eq:integrated-moment-identity-S} vanishes.
\end{proof}

\begin{corollary} 
\label{cor:signed-reinforcement-drift-bound}
For almost every $t\geq0$,
\begin{equation}
\left|
\frac{d}{dt}
\mathbb E\left(S_t^j\right)
\right|
\leq
|2p-1|
\frac{d}{dt}
\mathbb E\left(N_t^j\right).
\label{eq:signed-counter-drift-comparison}
\end{equation}
Consequently,
\begin{align}
\left|
\mathbb E\left(S_t^j\right)
-\mathbb E\left(S_0^j\right)
\right|
\leq{}&
|2p-1|
\times
\left[
\mathbb E\left(N_t^j\right)
-
\mathbb E\left(N_0^j\right)
\right].
\label{eq:integrated-signed-counter-bound}
\end{align}
\end{corollary}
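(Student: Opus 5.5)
The plan is to read both inequalities directly off the exact moment identities of Theorem~\ref{thm:exact-reinforcement-moment-identities}, using only the pointwise bound $|r_j(n,s)|\leq1$ and the nonnegativity of the incoming intensity $\Lambda_j$. I assume the integrability hypothesis $\mathbb E(N_0^j+|S_0^j|)<\infty$ of that theorem is in force, so that the identities \eqref{eq:moment-identity-N}--\eqref{eq:integrated-moment-identity-S} are available.

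For the differential form \eqref{eq:signed-counter-drift-comparison}, I start from \eqref{eq:moment-identity-S}. For almost every $t$, Jensen's inequality (or simply $|\mathbb E Z|\leq\mathbb E|Z|$) gives
\[
\left|\frac{d}{dt}\mathbb E(S_t^j)\right|
=|2p-1|\,\bigl|\mathbb E\bigl(r_j(N_t,S_t)\Lambda_j(X_t)\bigr)\bigr|
\leq|2p-1|\,\mathbb E\bigl(|r_j(N_t,S_t)|\Lambda_j(X_t)\bigr).
\]
Since $\phi_k\geq0$, we have $\Lambda_j\geq0$. Combined with $|r_j|\leq1$, the right-hand side is at most $|2p-1|\,\mathbb E(\Lambda_j(X_t))$. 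By \eqref{eq:moment-identity-N} this equals $|2p-1|\frac{d}{dt}\mathbb E(N_t^j)$. I take the set of full measure to be the intersection of the two sets on which \eqref{eq:moment-identity-N} and \eqref{eq:moment-identity-S} hold.

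For the integrated form \eqref{eq:integrated-signed-counter-bound}, I would not integrate the derivative inequality. Instead I work directly with the integrated identities \eqref{eq:integrated-moment-identity-S} and \eqref{eq:integrated-moment-identity-N}, which avoids any fuss about absolute continuity. I take absolute values in \eqref{eq:integrated-moment-identity-S} and move the absolute value inside the $du$-integral and inside the expectation. Then I bound $|r_j|\Lambda_j\leq\Lambda_j$, which yields
\[
\bigl|\mathbb E(S_t^j)-\mathbb E(S_0^j)\bigr|\leq|2p-1|\int_0^t\mathbb E(\Lambda_j(X_u))\,du,
\]
and by \eqref{eq:integrated-moment-identity-N} the right-hand side is exactly $|2p-1|\,[\mathbb E(N_t^j)-\mathbb E(N_0^j)]$.

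There is no real obstacle. The only point needing care is the finiteness of all the quantities involved, so that the subtractions make sense. This is supplied by Proposition~\ref{prop:finite-reinforcement-moments} together with $\mathbb E\int_0^T\Lambda_j(X_u)\,du\leq\mathbb E N^{\mathrm{sp}}[0,T]<\infty$ from Theorem~\ref{thm:elephant-nonexplosion}.
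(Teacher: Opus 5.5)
Your proposal is correct and follows the paper's proof: the derivative bound comes from \eqref{eq:moment-identity-S}, the pointwise bound $|r_j|\leq 1$, the nonnegativity of $\Lambda_j$, and then \eqref{eq:moment-identity-N}. The only difference is cosmetic: you derive the integrated bound directly from the integrated identities instead of integrating the derivative inequality as the paper does, and the two are equivalent given the absolute continuity established in Theorem~\ref{thm:exact-reinforcement-moment-identities}.
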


\begin{proof}
Since
\begin{equation*}
|r_j(N_t,S_t)|\leq1,
\end{equation*}
equations \eqref{eq:moment-identity-N} and
\eqref{eq:moment-identity-S} imply
\begin{align*}
\left|
\frac{d}{dt}
\mathbb E\left(S_t^j\right)
\right|
&\leq
|2p-1|
\mathbb E\left(
\Lambda_j(X_t)
\right)
=
|2p-1|
\frac{d}{dt}
\mathbb E\left(N_t^j\right).
\end{align*}
Integration over the interval from $0$ to $t$ gives
\eqref{eq:integrated-signed-counter-bound}.
\end{proof}

\begin{remark} 
The identity for $\mathbb E\left(N_t^j\right)$ depends only on the mean incoming
firing intensity. By contrast, the identity for
$\mathbb E\left(S_t^j\right)$ contains the mixed term
\begin{equation*}
\mathbb E\left(
r_j(N_t,S_t)\Lambda_j(X_t)
\right).
\end{equation*}
The system of first-moment equations is therefore not closed in general. This
term captures the dependence between the current reinforcement balance and the
incoming neuronal activity.
\end{remark}

\section{Conditional Wasserstein contraction}
\label{sec:wasserstein}

Fix $T>0$ and a deterministic admissible c\`adl\`ag reinforcement profile
\[
\eta_t=(n_t,s_t),
\qquad 0\leq t\leq T.
\]
Conditionally on this profile, the sign law at time $t$ is prescribed by
$Q_j^{\eta_t}(\pm1)$. We compare two membrane-potential processes evolving in
the same reinforcement environment. Whenever a simultaneous spike occurs, the
two systems use the same reinforced signs.

 Let \((q_i)_{i\geq1}\) be a positive summable sequence and define
\begin{equation*}
\|x\|_q
:=
\sum_{i=1}^{\infty}q_i x^i,
\qquad
x\in\mathbb R_+^{\mathbb N}.
\end{equation*}
We consider the weighted state space
\begin{equation}
\mathcal X_q
:=
\left\{
x\in\mathbb R_+^{\mathbb N}:
\|x\|_q<\infty
\right\}.
\label{eq:weighted-state-space-q}
\end{equation}
In order to relate the weighted state space used in the coupling argument to
the Lyapunov state space introduced in the non-explosion section, we assume
throughout this section that the weight sequences $(q_i)_{i\geq 1}$ and
$(\ell_i)_{i\geq 1}$ are uniformly comparable. More precisely, assume that
there exist constants $0<c_-\leq c_+<\infty$ such that
\begin{equation*}
c_- \ell_i \leq q_i \leq c_+ \ell_i,
\qquad i\in\mathbb N.
\end{equation*}
Since $q_i>0$ for every $i$, this assumption in particular implies that
$\ell_i>0$ for every $i$.

Then, for every $x\in\mathbb R_+^{\mathbb N}$,
\begin{equation}
c_- h(x)
\leq
\|x\|_q
\leq
c_+ h(x),
\label{eq:equivalent-weighted-norms}
\end{equation}
where
\begin{equation*}
h(x)
:=
\sum_{i=1}^{\infty}\ell_i x^i
\end{equation*}
and
\begin{equation*}
\|x\|_q
:=
\sum_{i=1}^{\infty}q_i x^i.
\end{equation*}
Consequently,
$\mathcal X_q=\mathcal X_h,
$ and the two weighted norms are equivalent. In particular,
\begin{equation*}
x\in\mathcal X_q
\quad\Longleftrightarrow\quad
x\in\mathcal X_h.
\end{equation*}

Moreover, by \eqref{eq:equivalent-weighted-norms},
\begin{equation*}
\|X_t\|_q
\leq
c_+ h(X_t).
\end{equation*}
Thus, every estimate obtained in terms of the Lyapunov function $h$ yields a
corresponding estimate for the weighted norm $\|\cdot\|_q$.
On \(\mathcal X_q\times\mathcal X_q\), define
\begin{equation}
H(x,y)
:=
\sum_{i=1}^{\infty}q_i|x^i-y^i|.
\label{eq:weighted-coupling-distance}
\end{equation} and define
\begin{equation*}
c_i
:=
\sum_{j\in\mathcal N_i}q_jc_{ij},
\end{equation*}
where
\begin{equation*}
c_{ij}:=\max\{w_{ij},v_{ij}\}.
\end{equation*}

By the comparability assumption above,
$\mathcal X_q=\mathcal X_h.
$ Hence the non-explosion and localization estimates established in
Section~\ref{sec:nonexplosion} apply to initial conditions in
$\mathcal X_q$.

We assume in addition that
\[
\overline a
:=
\sup_{i\in\mathbb N}a_i
<
\infty.
\]
These assumptions will be used only to justify the localization and
integrability arguments for the coupled distance $H$.

\begin{lemma} 
\label{lem:positive-part-elephant}
For every $b\in\mathbb R$, the map $T_b(x):=(x+b)_+$ is $1$-Lipschitz on
$\mathbb R_+$. Hence
\[
|T_b(x)-T_b(y)|\leq|x-y|.
\]
\end{lemma}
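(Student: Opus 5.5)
The plan is to write $T_b$ as a composition of two $1$-Lipschitz maps. The translation $x\mapsto x+b$ is an isometry of $\mathbb R$. The positive-part map $u\mapsto u_+=\max\{u,0\}$ is $1$-Lipschitz on $\mathbb R$. Since a composition of $1$-Lipschitz maps is again $1$-Lipschitz, it follows that $|T_b(x)-T_b(y)|\leq|(x+b)-(y+b)|=|x-y|$ for all $x,y\in\mathbb R$, and in particular for $x,y\in\mathbb R_+$.

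So the only real step is the Lipschitz property of $u\mapsto u_+$. I would verify it directly by cases on the signs of $u,v\in\mathbb R$:
\begin{itemize}
\item If $u,v\geq0$, then $|u_+-v_+|=|u-v|$.
\item If $u,v\leq0$, then $|u_+-v_+|=0\leq|u-v|$.
\item If $u\geq0\geq v$, then $|u_+-v_+|=u\leq u-v=|u-v|$; the remaining case is symmetric.
\end{itemize}
Alternatively, one can use the identity $u_+=\tfrac12(u+|u|)$ together with the reverse triangle inequality $\bigl||u|-|v|\bigr|\leq|u-v|$. This gives
\[
|u_+-v_+|\leq\tfrac12|u-v|+\tfrac12\bigl||u|-|v|\bigr|\leq|u-v|.
\]

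There is no genuine obstacle here; the statement is elementary. The only point to keep in mind for later use is that the bound is uniform in $b$. This matters because $b$ will be the random increment $\xi_{kj}\in\{+w_{kj},-v_{kj}\}$, which is shared by the two coupled systems at a simultaneous spike. In that situation the lemma gives
\[
\bigl|(x^j+\xi_{kj})_+-(y^j+\xi_{kj})_+\bigr|\leq|x^j-y^j|,
\]
which is exactly what the contraction estimate for $H$ will need.
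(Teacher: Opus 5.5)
Your proof is correct and follows the same approach as the paper: write $T_b$ as translation (an isometry) composed with the $1$-Lipschitz positive-part map. Your case check, or the identity $u_+=\tfrac12(u+|u|)$, just supplies the elementary detail that the paper takes for granted.
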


\begin{proof}
The positive-part map is $1$-Lipschitz on $\mathbb R$, and translation does not
change its Lipschitz constant.
\end{proof}

Assume that $g$ and every $\phi_i$ are nondecreasing and Lipschitz continuous,
and that for every $i$ and $0\leq x\leq y$,
\begin{align}
\phi_i(y)-\phi_i(x)&\leq g(y)-g(x),
\label{eq:phi-g-order}\\
g(y)-g(x)&\geq k_1(y-x),
\label{eq:g-strong-monotone}\\
x\bigl(\phi_i(y)-\phi_i(x)\bigr)&\leq\phi_i(y)(y-x).
\label{eq:reset-compatibility}
\end{align}
Assume further that
\begin{equation}
d
:=
k_1
\inf_{i\in\mathbb N}
\left(
a_i-\frac{c_i}{q_i}
\right)
>0.
\label{eq:general-contraction-rate}
\end{equation}

Let \((\Phi_t)_{t\geq0}\) denote the deterministic flow of the coupled process.
For a measurable function
$H:\mathcal X_q\times\mathcal X_q\longrightarrow\mathbb R,
$ define its upper right derivative along the flow by
\begin{equation*}
D^+H\bigl(z;b(z)\bigr)
:=
\limsup_{h\downarrow0}
\frac{
H\bigl(\Phi_h(z)\bigr)-H(z)
}{h}.
\end{equation*}
If the right derivative exists, then
\begin{equation*}
D^+H\bigl(z;b(z)\bigr)
=
\lim_{h\downarrow0}
\frac{
H\bigl(\Phi_h(z)\bigr)-H(z)
}{h}.
\end{equation*}
For a measurable function
$H:\mathcal X_h\times\mathcal X_h\longrightarrow\mathbb R
$ that is locally absolutely continuous along the deterministic flow, define the
extended generator of the coupled process by
\begin{align*}
\mathcal L_{2,s}^\eta H(z)
:=
D^+H\bigl(z;b(z)\bigr)
+
\int_{\mathcal X_q\times\mathcal X_q}
\left[
H(z')-H(z)
\right]
Q_s^\eta(z,dz').
\end{align*}
\begin{proposition}
\label{prop:elephant-coupled-dynkin}

Let
$Z_t=(X_t,\widehat X_t)
$ be the maximal coupling in the prescribed environment \(\eta\), and let
\(\tau\leq T\) be a bounded stopping time. Let
 $H:\mathcal X_h\times\mathcal X_h\longrightarrow[0,\infty)
$ be measurable and locally absolutely continuous along the deterministic flow
of the coupled process. Assume that
\begin{equation*}
\mathbb E
\int_0^\tau
\left|
D^+H\bigl(Z_s;b(Z_s)\bigr)
\right|
\,ds
<
\infty,
\end{equation*}
and
\begin{equation*}
\mathbb E
\int_0^\tau
\int_{\mathcal X_h\times\mathcal X_h}
\left|
H(z')-H(Z_{s-})
\right|
Q_s^\eta(Z_{s-}dz')
ds
<
\infty.
\end{equation*}
Then, for every \(t\in[0,T]\), the process
\begin{align*}
M_{t\wedge\tau}^H
:=
H(Z_{t\wedge\tau})
-
H(Z_0)-
\int_0^{t\wedge\tau}
\mathcal L_{2,s}^\eta H(Z_{s-})
ds
\end{align*}
is an integrable martingale with respect to the natural filtration of the
coupled process.

If, in addition, there exists \(d>0\) such that
\begin{equation*}
\mathcal L_{2,s}^\eta H(z)
\leq
-dH(z)
\end{equation*}
for all relevant \(s\) and \(z\), then
$\left(
e^{d(t\wedge\tau)}
H(Z_{t\wedge\tau})
\right)_{0\leq t\leq T}
$ is a nonnegative supermartingale.
\end{proposition}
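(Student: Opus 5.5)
We will decompose $H(Z_{t\wedge\tau})-H(Z_0)$ pathwise into a deterministic-flow part and a jump part, and then compensate the jump part exactly as in Lemmas~\ref{lem:predictable_integrable_integrands}--\ref{lem:elephant-stopped-poisson-martingale}. We represent the maximal coupling through the same Poisson random measures $\Pi^k(ds,dz,du)$. A point $(s,z,u)$ with $z\leq\phi_k(X^k_{s-})\wedge\phi_k(\widehat X^k_{s-})$ produces a simultaneous spike, and both systems use the common sign vector $F_k(\eta_{s-},u)$. The remaining levels up to $\phi_k(X^k_{s-})\vee\phi_k(\widehat X^k_{s-})$ produce a spike in only one system. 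The resulting jump kernel is precisely $Q^\eta_s(z,dz')$. Because the environment $\eta$ is deterministic, the marking maps $u\mapsto F_k(\eta_{s-},u)$ are deterministic in $\omega$. The integrand
\[
\mathbf 1_{\{s\leq\tau\}}\bigl[H(\Delta(Z_{s-};k,z,u))-H(Z_{s-})\bigr]
\]
is then predictable, since $Z_{s-}$ is predictable and $\tau$ is a stopping time.

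Between jumps, $H$ is locally absolutely continuous along $\Phi$. Hence on each inter-jump interval $H(Z_t)-H(Z_{t_i})=\int_{t_i}^t D^+H(Z_s;b(Z_s))\,ds$, because for an absolutely continuous function the upper right Dini derivative coincides a.e. with the derivative. By non-explosion (Theorem~\ref{thm:elephant-nonexplosion}, applied to each marginal; the coupled spikes are dominated by the sum of the two marginal spike counts), only finitely many jumps occur on $[0,T]$. Summing over the intervals gives
\[
H(Z_{t\wedge\tau})-H(Z_0)=\int_0^{t\wedge\tau}D^+H\,ds+\sum_k\int\!\!\int\!\!\int \mathbf 1_{\{s\leq t\wedge\tau\}}\bigl[H(\Delta)-H(Z_{s-})\bigr]\,N(ds,dz,du).
\]
We then add and subtract the compensator $\int_0^{t\wedge\tau}\!\int [H(z')-H(Z_{s-})]\,Q^\eta_s(Z_{s-},dz')\,ds$. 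The second integrability hypothesis is exactly the $L^1$ condition of Lemma~\ref{lem:elephant-stopped-poisson-martingale}. The same truncation-in-$k$ and $L^1$-Cauchy argument used there shows that the compensated sum is an integrable martingale; now the integrand need not be localized by $\sigma_{m,r}$, since integrability is assumed directly. The first hypothesis makes the drift integral integrable. Together these give $H(Z_{t\wedge\tau})\in L^1$ and the martingale property of $M^H_{t\wedge\tau}$.

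For the supermartingale claim, we apply the same decomposition to the time-dependent function $e^{ds}H(z)$. The product rule along the flow (absolute continuity of $e^{ds}$ times the a.e. derivative of $H$) gives the drift $e^{ds}(dH+D^+H)$. Jumps simply multiply by $e^{ds}$. Integrability on $[0,\tau]$ with $\tau\leq T$ follows from the hypotheses because $e^{ds}\leq e^{dT}$. We therefore obtain
\[
e^{d(t\wedge\tau)}H(Z_{t\wedge\tau})=H(Z_0)+\int_0^{t\wedge\tau}e^{ds}\bigl(dH+\mathcal L^\eta_{2,s}H\bigr)(Z_{s-})\,ds+\widetilde M_t,
\]
where $\widetilde M_t$ is a martingale. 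The integral term is nonpositive under $\mathcal L^\eta_{2,s}H\leq -dH$. Together with $H\geq0$, this yields a nonnegative supermartingale.

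The main obstacle is the pathwise Dynkin step for $H$. $H$ is an infinite sum and only locally absolutely continuous, so one must make sure that the fundamental theorem of calculus holds with the $\limsup$-derivative $D^+H$. One must also check that no accumulation of jumps breaks the interval-by-interval summation. We would handle the first point by noting that $D^+H=\frac{d}{dt}H(\Phi_t z)$ for a.e. $t$, and the second by the almost sure finiteness of spikes on $[0,T]$ from Theorem~\ref{thm:elephant-nonexplosion}.
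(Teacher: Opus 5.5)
Your proposal is correct and follows essentially the same route as the paper: a pathwise flow-plus-jump decomposition of $H(Z_{t\wedge\tau})-H(Z_0)$, compensation of the jump sum (made a true martingale by the second integrability hypothesis), and then integration by parts for $e^{d(t\wedge\tau)}H(Z_{t\wedge\tau})$ using $\mathcal L_{2,s}^\eta H\leq -dH$. The differences are only in presentation and added rigour. You realize the compensator through the driving Poisson measures $\Pi^k$ rather than the jump measure $\mu^Z$. You also make explicit two points the paper leaves implicit: that only finitely many jumps occur on $[0,T]$, and that $D^+H$ agrees almost everywhere with the derivative along the flow.
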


\begin{proof}
Let \(\mu^Z(ds,dz')\) denote the integer-valued random measure associated with
the jumps of the coupled process \(Z\). Its predictable compensator is given by
\begin{equation*}
\nu^Z(ds,dz')
=
Q_s^\eta\bigl(Z_{s-}dz'\bigr)\,ds.
\end{equation*}
Here, \(Q_s^\eta(z,dz')\) denotes the time-dependent jump-rate kernel of the
coupled process in the fixed environment \(\eta\). Thus, 
$Q_s^\eta\bigl(Z_{s-}dz'\bigr)ds$ 
represents the conditional expected number of jumps during the infinitesimal
time interval \([s,s+ds]\) whose post-jump state belongs to \(dz'\), given the
information available immediately before time \(s\).

The corresponding compensated jump measure is defined by
\begin{equation*}
\widetilde\mu^Z(ds,dz')
:=
\mu^Z(ds,dz')
-
\nu^Z(ds,dz').
\end{equation*}
Equivalently,
\begin{equation*}
\widetilde\mu^Z(ds,dz')
=
\mu^Z(ds,dz')
-
Q_s^\eta\bigl(Z_{s-}dz'\bigr)\,ds.
\end{equation*}
The compensated measure describes the centered jump fluctuations, while the
compensator gives the predictable finite-variation contribution of the jumps.

In particular, for every predictable and integrable function \(G\),
\begin{align*}
\int_0^t\int
G(s,z')\,\mu^Z(ds,dz')
=\int_0^t\int
G(s,z')\,\widetilde\mu^Z(ds,dz')
+
\int_0^t\int
G(s,z')\,
Q_s^\eta\bigl(Z_{s-}dz'\bigr)\,ds.
\end{align*}
The first term is the martingale part, while the second term is the predictable
finite-variation part.  Indeed, for \(0\leq u\leq t\),
the conditional compensation formula gives
\begin{align*}
&\mathbb E
\left[
\int_u^t\int
G(s,z')\,\widetilde\mu^Z(ds,dz')
\,\middle|\,
\mathcal F_u
\right]
=
\mathbb E
\left[
\int_u^t\int
G(s,z')\,\mu^Z(ds,dz')
\,\middle|\,
\mathcal F_u
\right]
\\
&-
\mathbb E
\left[
\int_u^t\int
G(s,z')
Q_s^\eta\bigl(Z_{s-}dz'\bigr)\,ds
\,\middle|\,
\mathcal F_u
\right]
=
0,
\end{align*}
provided that \(G\) is predictable and integrable. Therefore, the process
$\left(
\int_0^t\int
G(s,z')\,\widetilde\mu^Z(ds,dz')
\right)_{t\geq0}
$ is an integrable martingale.
On the other hand, define
\begin{equation*}
A_t
:=
\int_0^t
\int
G(s,z')\,
Q_s^\eta\bigl(Z_{s-},dz'\bigr)
\,ds.
\end{equation*}
Then $A$ is predictable. Moreover,
\begin{equation*}
\operatorname{TV}_{[0,t]}(A)
\leq
\int_0^t
\int
\left|
G(s,z')
\right|
Q_s^\eta\bigl(Z_{s-},dz'\bigr)
\,ds,
\end{equation*}
which is finite under the stated integrability assumption. Hence $A$ is a
predictable finite-variation process. It represents the predictable mean
contribution of the jumps. Now take \begin{equation*}
G(s,z')
:=
H(z')-H(Z_{s-}).
\end{equation*}

Let \(\mu^Z(ds,dz')\) denote the integer-valued random measure associated with
the jumps of the coupled process \(Z\). Its predictable compensator is
\begin{equation*}
\nu^Z(ds,dz')
=
Q_s^\eta(Z_{s-}dz')\,ds.
\end{equation*}
Between successive jumps, \(Z\) follows the deterministic vector field \(b\).
Since \(H\) is locally absolutely continuous along this flow, its continuous
finite-variation contribution satisfies
\begin{equation*}
H(Z_t)-H(Z_0)
=
\int_0^t
D^+H\bigl(Z_s;b(Z_s)\bigr)
\,ds
\end{equation*}
on every interval containing no jump.

Adding the jump increments and stopping at \(\tau\), we obtain the pathwise
decomposition
\begin{align*}
H(Z_{t\wedge\tau})-H(Z_0)
={}&
\int_0^{t\wedge\tau}
D^+H\bigl(Z_s;b(Z_s)\bigr)
\,ds
\\
&+
\int_0^{t\wedge\tau}
\int_{\mathcal X_h\times\mathcal X_h}
\left[
H(z')-H(Z_{s-})
\right]
\mu^Z(ds,dz').
\end{align*}
Writing
\begin{equation*}
\mu^Z(ds,dz')
=
\bigl(\mu^Z-\nu^Z\bigr)(ds,dz')
+
\nu^Z(ds,dz'),
\end{equation*}
the preceding identity becomes
\begin{align*}
H(Z_{t\wedge\tau})-H(Z_0)
={}&
\int_0^{t\wedge\tau}
D^+H\bigl(Z_s;b(Z_s)\bigr)
\,ds
\\
&+
\int_0^{t\wedge\tau}
\int_{\mathcal X_h\times\mathcal X_h}
\left[
H(z')-H(Z_{s-})
\right]
Q_s^\eta(Z_{s-}dz')
\,ds
\\
&+
\int_0^{t\wedge\tau}
\int_{\mathcal X_h\times\mathcal X_h}
\left[
H(z')-H(Z_{s-})
\right]
\bigl(\mu^Z-\nu^Z\bigr)(ds,dz').
\end{align*}
By definition of the extended coupled generator,
\begin{align*}
\mathcal L_{2,s}^\eta H(z)
:=
D^+H\bigl(z;b(z)\bigr)
+
\int_{\mathcal X_h\times\mathcal X_h}
\left[
H(z')-H(z)
\right]
Q_s^\eta(z,dz').
\end{align*}
Therefore,
\begin{align*}
M_{t\wedge\tau}^H
=
\int_0^{t\wedge\tau}
\int_{\mathcal X_h\times\mathcal X_h}
\left[
H(z')-H(Z_{s-})
\right]
\bigl(\mu^Z-\nu^Z\bigr)(ds,dz').
\end{align*}
The two integrability assumptions imply that the finite-variation terms are
integrable and that the compensated jump integral is a true integrable
martingale.  Indeed, by the first integrability assumption,
\begin{equation*}
\mathbb E
\int_0^\tau
\left|
D^+H\bigl(Z_{s-};b(Z_{s-})\bigr)
\right|
\,ds
<
\infty,
\end{equation*}
so the continuous finite-variation term is integrable. By the second
integrability assumption,
\begin{equation*}
\mathbb E
\int_0^\tau
\int_{\mathcal X_h\times\mathcal X_h}
\left|
H(z')-H(Z_{s-})
\right|
Q_s^\eta\bigl(Z_{s-}dz'\bigr)
\,ds
<
\infty,
\end{equation*}
so the predictable jump-compensator term is also integrable.

Moreover, the mapping
\begin{equation*}
(s,z')
\longmapsto
\mathbf 1_{\{s\leq\tau\}}
\left[
H(z')-H(Z_{s-})
\right]
\end{equation*}
is predictable. Therefore, the compensated stochastic integral
\begin{align*}
M_{t\wedge\tau}^H
:=
\int_0^{t\wedge\tau}
\int_{\mathcal X_h\times\mathcal X_h}
\left[
H(z')-H(Z_{s-})
\right]
\widetilde\mu^Z(ds,dz')
\end{align*}
is well defined and integrable.

Indeed, for \(0\leq u\leq t\leq T\), the conditional compensation formula
gives
\begin{align*}
\mathbb E
\left[
M_{t\wedge\tau}^H-M_{u\wedge\tau}^H
\,\middle|\,
\mathcal F_u
\right]
=
0.
\end{align*}
Consequently,
\begin{equation*}
\mathbb E
\left[
M_{t\wedge\tau}^H
\,\middle|\,
\mathcal F_u
\right]
=
M_{u\wedge\tau}^H,
\end{equation*}
and hence
$\left(
M_{t\wedge\tau}^H
\right)_{0\leq t\leq T}
$ 
is an integrable martingale.

Combining the continuous finite-variation term, the jump-compensator term,
and the compensated martingale term, we obtain
\begin{align*}
H(Z_{t\wedge\tau})-H(Z_0)
={}&
\int_0^{t\wedge\tau}
\mathcal L_{2,s}^\eta H(Z_{s-})
\,ds
+
M_{t\wedge\tau}^H.
\end{align*}
This proves the extended Dynkin formula. 

For the second assertion, apply integration by parts to
$e^{d(t\wedge\tau)}H(Z_{t\wedge\tau}).
$ Using the martingale decomposition above gives
\begin{align*}
e^{d(t\wedge\tau)}H(Z_{t\wedge\tau})
=
H(Z_0)
+
\int_0^{t\wedge\tau}
e^{ds}
\left[
dH(Z_{s-})
+
\mathcal L_{2,s}^\eta H(Z_{s-})
\right]
ds
+
\int_0^{t\wedge\tau}
e^{ds}
dM_s^H.
\end{align*}
By assumption,
\begin{equation*}
dH(Z_{s-})
+
\mathcal L_{2,s}^\eta H(Z_{s-})
\leq
0.
\end{equation*}
Hence the finite-variation part is nonincreasing, while the last term is an
integrable martingale. It follows that
$\left(
e^{d(t\wedge\tau)}
H(Z_{t\wedge\tau})
\right)_{0\leq t\leq T}
$ is a nonnegative supermartingale.
\end{proof}

\begin{lemma} 
\label{lem:elephant-coupling-generator}
For every admissible reinforcement profile $\eta$, every
$x,y\in\mathcal X_q$, and every $0\leq t\leq T$,
\begin{equation}
\mathcal L_{2,t}^{\eta}H(x,y)
\leq
-dH(x,y).
\label{eq:common-environment-generator-contraction}
\end{equation}
\end{lemma}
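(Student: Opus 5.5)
The plan is a direct computation of $\mathcal L_{2,t}^{\eta}H$ for the maximal (basic) coupling in the common environment $\eta$. Since $H$ is a sum over coordinates, I will estimate its two parts separately: the flow contribution coordinate by coordinate, and the jump contribution firing neuron by firing neuron. The coupling is taken to be the following. At time $t$, neuron $i$ fires simultaneously in both copies at rate $\phi_i(x^i)\wedge\phi_i(y^i)$. It fires alone in the copy with the larger potential at rate $|\phi_i(x^i)-\phi_i(y^i)|$. At a simultaneous spike both copies use the same sign vector $\varepsilon_i\sim Q_i^{\eta_t}$, so that every neighbour $j\in\mathcal N_i$ receives the same increment $\xi_{ij}=b$ in both copies. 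I first state this explicitly. The sign law itself will play no role beyond the bound $|\xi_{ij}|\le c_{ij}$, and this is why the estimate is uniform in $\eta$ and $t$.

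\emph{Flow term.} For each $j$, $t\mapsto q_j|x^j_t-y^j_t|$ along the flow has upper right derivative $-q_j a_j\,\mathrm{sgn}(x^j-y^j)\bigl(g(x^j)-g(y^j)\bigr)=-q_ja_j|g(x^j)-g(y^j)|$. This uses that $g$ is nondecreasing; at $x^j=y^j$ the two trajectories coincide, so the derivative is $0$. I will sum these termwise. This requires the series of difference quotients to be dominated, which follows from the Lipschitz property of $g$, from $\overline a<\infty$ and from $H(x,y)<\infty$. The result is $D^+H\le-\sum_j q_ja_j|g(x^j)-g(y^j)|$.

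\emph{Jump terms.} Fix the firing neuron $i$ and assume without loss of generality $x^i\le y^i$. Then $\phi_i(x^i)\le\phi_i(y^i)$ by monotonicity. At a common spike, coordinate $i$ is reset in both copies, so $H$ changes by $-q_i(y^i-x^i)$. For $j\in\mathcal N_i$, Lemma~\ref{lem:positive-part-elephant} applied with the common $b=\xi_{ij}$ shows that the $j$-th term does not increase. At a lone spike of the $y$-copy, the $i$-th term becomes $q_ix^i$, and the neighbours contribute at most $\sum_{j\in\mathcal N_i}q_jc_{ij}=c_i$. The $i$-contribution is therefore at most
\[
-\phi_i(x^i)q_i(y^i-x^i)+\bigl(\phi_i(y^i)-\phi_i(x^i)\bigr)\bigl(q_i(2x^i-y^i)+c_i\bigr).
\]
After regrouping, this equals
\[
q_i\bigl[x^i(\phi_i(y^i)-\phi_i(x^i))-\phi_i(y^i)(y^i-x^i)\bigr]+c_i\bigl(\phi_i(y^i)-\phi_i(x^i)\bigr).
\]
The bracket is $\le0$ by the reset compatibility \eqref{eq:reset-compatibility}. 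The last term is $\le c_i|g(x^i)-g(y^i)|$ by \eqref{eq:phi-g-order}.

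\emph{Conclusion.} Summing the flow and jump contributions gives
\[
\mathcal L_{2,t}^{\eta}H(x,y)\le-\sum_i q_i\Bigl(a_i-\frac{c_i}{q_i}\Bigr)|g(x^i)-g(y^i)|.
\]
Each coefficient $a_i-c_i/q_i$ is positive by \eqref{eq:general-contraction-rate}. Applying \eqref{eq:g-strong-monotone} then yields the bound $-k_1\inf_i(a_i-c_i/q_i)\,H(x,y)=-dH(x,y)$. The step I expect to need the most care is not the algebra but justifying the termwise treatment of the infinite sums. This means showing that the jump integral against $Q_t^\eta$ equals the sum over $i$ of the per-neuron contributions, and that $D^+$ of the series is bounded by the series of the $D^+$'s. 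For the jump part I would first try Lemma~\ref{lem:elephant-generator-bound}-type bounds: $\sum_i\phi_i\le C_0+h$, together with $c_i\le c_+\sum_j\ell_jc_{ij}\le c_+C_*$ and the equivalence \eqref{eq:equivalent-weighted-norms}, give absolute summability on $\mathcal X_q\times\mathcal X_q$. For the flow part I would use dominated convergence on the difference quotients.
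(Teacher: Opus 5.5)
Your proposal is correct and follows the paper's proof essentially step for step. It uses the same maximal coupling with shared signs at simultaneous spikes, the same flow term $-\sum_i q_ia_i|g(x^i)-g(y^i)|$, the same per-neuron jump bound for $x^i\le y^i$ regrouped so that \eqref{eq:reset-compatibility} makes the bracket nonpositive, and the same final appeal to \eqref{eq:phi-g-order} and \eqref{eq:g-strong-monotone}. The one addition is your justification for treating the infinite sums termwise: absolute summability of the jump terms via $c_i\le c_+C_*$ and $\sum_i\phi_i\le C_0+h$, and dominated convergence for the flow. The paper performs that step without comment.
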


\begin{proof}
For every neuron $i\in\mathbb N$, set
\begin{equation*}
\lambda_i
:=
\phi_i(x^i),
\qquad
\widehat\lambda_i
:=
\phi_i(y^i).
\end{equation*}
Under the maximal coupling of the two firing clocks, the events associated with
neuron $i$ are decomposed into:

\begin{enumerate}[label=\textnormal{(\roman*)}]
\item simultaneous spikes at rate
$\lambda_i\wedge\widehat\lambda_i;
$
\item spikes only in the first system at rate
$(\lambda_i-\widehat\lambda_i)_+;
$\item spikes only in the second system at rate
$(\widehat\lambda_i-\lambda_i)_+.
$\end{enumerate}

At simultaneous spikes, the two systems use the same reinforced signs because
they evolve in the same prescribed reinforcement environment $\eta$.

We decompose the coupling generator as
\begin{equation*}
\mathcal L_{2,t}^{\eta}H(x,y)
=
I_{\mathrm{flow}}(x,y)
+
I_{\mathrm{jump}}(x,y).
\end{equation*}

The deterministic-flow contribution is
\begin{align}
I_{\mathrm{flow}}(x,y)
={}&
-\sum_{i=1}^{\infty}
q_i a_i
\operatorname{sgn}(x^i-y^i)
\bigl(g(x^i)-g(y^i)\bigr)
\nonumber\\
={}&
-\sum_{i=1}^{\infty}
q_i a_i
\left|
g(x^i)-g(y^i)
\right|.
\label{eq:flow-contribution-common-environment}
\end{align}
Indeed, since \(g\) is nondecreasing,
\begin{equation*}
\operatorname{sgn}(x^i-y^i)
\bigl(g(x^i)-g(y^i)\bigr)
=
\left|
g(x^i)-g(y^i)
\right|
\end{equation*}
for every \(i\in\mathbb N\), where
\(\operatorname{sgn}(0):=0\).

We now estimate the jump contribution neuron by neuron. Fix
$i\in\mathbb N$. We first consider the case
$x^i\leq y^i.
$Since $\phi_i$ is nondecreasing, 
$\lambda_i\leq\widehat\lambda_i.
$ Thus, neuron $i$ produces simultaneous spikes at rate $\lambda_i$ and unmatched
spikes only in the second system at rate
$\widehat\lambda_i-\lambda_i$.

At a simultaneous spike, both firing coordinates are reset:
\begin{equation*}
x^i\longmapsto0,
\qquad
y^i\longmapsto0.
\end{equation*}
The $i$-th contribution to $H$ therefore decreases by
$-q_i(y^i-x^i).
$ For every $j\in\mathcal N_i$, both systems receive the same reinforced
synaptic increment $\xi_{ij}$. By
Lemma~\ref{lem:positive-part-elephant},
\begin{equation*}
\left|
(x^j+\xi_{ij})_+
-
(y^j+\xi_{ij})_+
\right|
\leq
|x^j-y^j|.
\end{equation*}
Hence the postsynaptic coordinates do not increase the distance. It follows that
the simultaneous-spike contribution associated with neuron $i$ is bounded by
\begin{equation}
-\lambda_iq_i(y^i-x^i).
\label{eq:simultaneous-spike-contribution}
\end{equation}

Consider now an unmatched spike of neuron $i$ in the second system. The first
system remains unchanged, whereas
$y^i\longmapsto0.
$ The change in the $i$-th contribution to $H$ is
\begin{align*}
q_i|x^i|
-
q_i|x^i-y^i|
=
q_ix^i-q_i(y^i-x^i)
=
q_i(2x^i-y^i).
\end{align*}

For every $j\in\mathcal N_i$, only the second system receives the synaptic
increment $\xi_{ij}$. By the triangle inequality and the Lipschitz property of
the positive-part map,
\begin{align*}
\left|
x^j-(y^j+\xi_{ij})_+
\right|
-
|x^j-y^j|
\leq
\left|
(y^j+\xi_{ij})_+-y^j
\right|
\leq
|\xi_{ij}|
\leq
c_{ij}.
\end{align*}
After multiplication by $q_j$ and summation over
$j\in\mathcal N_i$, the total postsynaptic increase is bounded by
\begin{equation*}
c_i
=
\sum_{j\in\mathcal N_i}q_jc_{ij}.
\end{equation*}
Therefore, the unmatched-spike contribution is bounded by
\begin{equation}
(\widehat\lambda_i-\lambda_i)
\left(
q_i(2x^i-y^i)+c_i
\right).
\label{eq:unmatched-spike-contribution}
\end{equation}

Combining
\eqref{eq:simultaneous-spike-contribution} and
\eqref{eq:unmatched-spike-contribution}, we obtain
\begin{align}
I_{\mathrm{jump}}^i(x,y)
\leq
-\lambda_iq_i(y^i-x^i)
+
(\widehat\lambda_i-\lambda_i)
\left(
q_i(2x^i-y^i)+c_i
\right).
\label{eq:combined-jump-contribution-first}
\end{align}
Rearranging the terms gives
\begin{align}
I_{\mathrm{jump}}^i(x,y)
\leq
c_i(\widehat\lambda_i-\lambda_i)
+
q_i
\left[
x^i(\widehat\lambda_i-\lambda_i)
-
\widehat\lambda_i(y^i-x^i)
\right].
\label{eq:combined-jump-contribution-second}
\end{align}
Indeed,
\begin{align*}
-\lambda_i(y^i-x^i)
+
(\widehat\lambda_i-\lambda_i)(2x^i-y^i)
=
x^i(\widehat\lambda_i-\lambda_i)
-
\widehat\lambda_i(y^i-x^i).
\end{align*}

Since
\begin{equation*}
\widehat\lambda_i-\lambda_i
=
\phi_i(y^i)-\phi_i(x^i),
\end{equation*}
assumption \eqref{eq:reset-compatibility} implies
\begin{equation*}
x^i(\widehat\lambda_i-\lambda_i)
-
\widehat\lambda_i(y^i-x^i)
\leq0.
\end{equation*}
Consequently,
\begin{equation}
I_{\mathrm{jump}}^i(x,y)
\leq
c_i
\left(
\phi_i(y^i)-\phi_i(x^i)
\right).
\label{eq:jump-bound-first-ordering}
\end{equation}

If $y^i\leq x^i$, the same argument, with the roles of $x$ and $y$
interchanged, gives
\begin{equation}
I_{\mathrm{jump}}^i(x,y)
\leq
c_i
\left|
\phi_i(x^i)-\phi_i(y^i)
\right|.
\label{eq:jump-bound-both-orderings}
\end{equation}
Thus, after summing over all neurons,
\begin{equation}
I_{\mathrm{jump}}(x,y)
\leq
\sum_{i=1}^{\infty}
c_i
\left|
\phi_i(x^i)-\phi_i(y^i)
\right|.
\label{eq:total-jump-contribution-common-environment}
\end{equation}

Combining
\eqref{eq:flow-contribution-common-environment} and
\eqref{eq:total-jump-contribution-common-environment}, and retaining the
stronger form of the deterministic-flow estimate, we obtain
\begin{align}
\mathcal L_{2,t}^{\eta}H(x,y)
\leq
-\sum_{i=1}^{\infty}
q_i a_i
\left|
g(x^i)-g(y^i)
\right|
+
\sum_{i=1}^{\infty}
c_i
\left|
\phi_i(x^i)-\phi_i(y^i)
\right|.
\label{eq:generator-before-final-contraction}
\end{align}
Since $g$ and $\phi_i$ are nondecreasing, assumption
\eqref{eq:phi-g-order} implies
\begin{equation*}
\left|
\phi_i(x^i)-\phi_i(y^i)
\right|
\leq
\left|
g(x^i)-g(y^i)
\right|.
\end{equation*}
Consequently,
\begin{align}
\mathcal L_{2,t}^{\eta}H(x,y)
\leq{}&
-\sum_{i=1}^{\infty}
\left(
q_i a_i-c_i
\right)
\left|
g(x^i)-g(y^i)
\right|
\nonumber\\
={}&
-\sum_{i=1}^{\infty}
q_i
\left(
a_i-\frac{c_i}{q_i}
\right)
\left|
g(x^i)-g(y^i)
\right|.
\label{eq:generator-combined-g-bound}
\end{align}
By \eqref{eq:g-strong-monotone},
\begin{equation*}
\left|
g(x^i)-g(y^i)
\right|
\geq
k_1|x^i-y^i|.
\end{equation*}
Because
\begin{equation*}
a_i-\frac{c_i}{q_i}>0
\end{equation*}
for every $i$, it follows that
\begin{align}
\mathcal L_{2,t}^{\eta}H(x,y)
\leq{}&
-k_1
\sum_{i=1}^{\infty}
q_i
\left(
a_i-\frac{c_i}{q_i}
\right)
|x^i-y^i|
\nonumber\\
\leq{}&
-d
\sum_{i=1}^{\infty}
q_i|x^i-y^i|
\nonumber\\
={}&
-dH(x,y),
\end{align}
where the final inequality follows from
\eqref{eq:general-contraction-rate}.
\end{proof}
 
\begin{lemma} 
\label{lem:elephant-coupled-integrability}

Assume that
\[
C_q
:=
\sup_{i\in\mathbb N}
\frac{\ell_i}{q_i}
,
\qquad
\overline a
:=
\sup_{i\in\mathbb N}a_i
<
\infty.
\]
Assume also that the contraction condition
\[
d
=
k_1
\inf_{i\in\mathbb N}
\left(
a_i-\frac{c_i}{q_i}
\right)
>0
\]
holds. Then
\[
C_c
:=
\sup_{i\in\mathbb N}
\frac{c_i}{q_i}
<
\infty.
\]
Let
\begin{equation*}
\tau_R
:=
\inf
\left\{
t\geq0:
\|X_t\|_q+\|\widehat X_t\|_q\geq R
\ \text{or}\
N_X^{\mathrm{sp}}[0,t]
+
N_{\widehat X}^{\mathrm{sp}}[0,t]
\geq R
\right\}.
\end{equation*}
Then, for every $T>0$, the hypotheses of
Proposition~\ref{prop:elephant-coupled-dynkin}
are satisfied with
$\tau=\tau_R\wedge T.
$ \end{lemma}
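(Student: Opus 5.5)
First we obtain $C_c<\infty$ directly from the contraction condition. Since $d>0$ and $k_1>0$, we have $a_i-c_i/q_i\geq d/k_1>0$ for every $i$. Hence
\[
0\leq \frac{c_i}{q_i}\leq a_i-\frac{d}{k_1}\leq \overline a,
\]
so $C_c\leq\overline a<\infty$. The comparability assumption $c_-\ell_i\leq q_i$ also gives $C_q\leq 1/c_-<\infty$, which will be used below.

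Next we verify the two integrability hypotheses of Proposition~\ref{prop:elephant-coupled-dynkin} with $\tau=\tau_R\wedge T$. On the stochastic interval $\{s<\tau_R\}$ we have $\|X_{s}\|_q+\|\widehat X_{s}\|_q<R$, so $H(Z_s)\leq R$.

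\emph{Flow term.} Along the flow, $H$ is locally absolutely continuous, and
\[
|D^+H(Z_s;b(Z_s))|\leq \sum_i q_ia_i|g(X_s^i)-g(\widehat X_s^i)|.
\]
Since $g$ is Lipschitz with some constant $L_g$ (assumed in this section), this is at most $\overline a L_g H(Z_s)\leq \overline a L_gR$. Integrating over $[0,\tau]$ gives a bound $\overline a L_g R T$.

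\emph{Jump term.} The estimate follows the case analysis of Lemma~\ref{lem:elephant-coupling-generator}, but with absolute values.

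\begin{itemize}
\item[] At a simultaneous spike of neuron $i$, the reset changes $H$ by at most $q_i|x^i-y^i|$. The common synaptic increments do not increase $H$ (Lemma~\ref{lem:positive-part-elephant}), and each can decrease it by at most $q_j|x^j-y^j|$. So $|\Delta H|\leq 2H(x,y)\leq 2R$.
\item[] At an unmatched spike of neuron $i$, $|\Delta H|\leq q_i(x^i+y^i)+c_i$.
\end{itemize}

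The total simultaneous-plus-unmatched rate of neuron $i$ is at most $\phi_i(x^i)+\phi_i(y^i)$. Using $c_i\leq C_cq_i$ and $q_i\leq c_+\ell_i$, we therefore get
\[
\int|H(z')-H(z)|\,Q_s^\eta(z,dz')\leq \sum_i\bigl(\phi_i(x^i)+\phi_i(y^i)\bigr)\Bigl(2R+c_+\ell_i(x^i+y^i)+C_cq_i\Bigr).
\]
Now bound $\phi_i(x^i)\leq\phi_i(0)+\ell_ix^i$, and use $\sum_i\ell_ix^i=h(x)\leq \|x\|_q/c_-<R/c_-$ on $\{s<\tau_R\}$. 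Each factor is bounded there: $\ell_i(x^i+y^i)\leq 2R/c_-$ termwise and $q_i\leq\sum_k q_k$ (finite). The sum is then at most $\bigl(2C_0+2R/c_-\bigr)\bigl(2R+2c_+R/c_-+C_c\sum_kq_k\bigr)=:K_R$, uniformly in $s<\tau_R$ and in the environment $\eta$. Integrating gives $\mathbb E\int_0^\tau(\cdots)\,ds\leq K_RT<\infty$.

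The main obstacle is a technical one. Proposition~\ref{prop:elephant-coupled-dynkin} uses the left limit $Z_{s-}$, and the integrability must hold up to $\tau$ inclusive, although the bound above holds only strictly before $\tau_R$. This is harmless, because the integrals are against $ds$, so the single time $s=\tau_R$ has Lebesgue measure zero, and $Z_{s-}=Z_s$ for a.e.\ $s$. The spike-count part of $\tau_R$ additionally guarantees that only finitely many jumps occur before $\tau$. The size of the final overshooting jump never enters, since only compensator integrals are required.

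Finally, we check that $\tau_R\uparrow\infty$ as $R\to\infty$, so the localization is legitimate. This follows by applying Theorem~\ref{thm:elephant-nonexplosion} and Lemma~\ref{lem:remove-lyapunov-localization} to each marginal in the fixed environment; the same Lyapunov argument applies verbatim there, because it used only the bound $|\xi_{kj}|\leq c_{kj}$. It is then combined with the norm equivalence \eqref{eq:equivalent-weighted-norms}.
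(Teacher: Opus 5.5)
Your proof is correct and follows essentially the same route as the paper. The drift term is bounded by $\overline a\,\operatorname{Lip}(g)\,H\le\overline a\,\operatorname{Lip}(g)R$, and the jump compensator is bounded neuron by neuron by the total rate $\phi_i(x^i)+\phi_i(y^i)$ times a per-spike bound on $|\Delta H|$, which the localization at $\tau_R$ makes uniform. Your version is more complete than the paper's: you derive $C_c\le\overline a$ explicitly from the contraction condition, give an explicit constant $K_R$ using $C_0<\infty$ and $\sum_k q_k<\infty$, and handle the $Z_{s-}$ versus $Z_s$ point, all of which the paper leaves implicit.
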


  \begin{proof}
Fix $T>0$ and recall that
\[
\tau_R
:=
\inf\left\{
t\geq0:
\|X_t\|_q+\|\widehat X_t\|_q\geq R
\text{ or }
N_X^{\mathrm{sp}}[0,t]
+
N_{\widehat X}^{\mathrm{sp}}[0,t]
\geq R
\right\}
\wedge T.
\]

We verify separately the drift and jump integrability conditions required in
Proposition~\ref{prop:elephant-coupled-dynkin}.

For the deterministic part, since
\[
H(x,y)
=
\sum_{i\geq1}q_i|x^i-y^i|,
\]
the absolute drift contribution is bounded by
\[
\sum_{i\geq1}
q_i a_i
\left|
g(X_s^i)-g(\widehat X_s^i)
\right|.
\]
Using the Lipschitz continuity of $g$ and
\[
\overline a
:=
\sup_{i\geq1}a_i
<
\infty,
\]
we obtain
\begin{align}
\mathbb E
\int_0^{t\wedge\tau_R}
\sum_{i\geq1}
q_i a_i
\left|
g(X_s^i)-g(\widehat X_s^i)
\right|
ds&
\leq 
\overline a\,\operatorname{Lip}(g)
\mathbb E
\int_0^{t\wedge\tau_R}
H(X_s,\widehat X_s)
ds
\nonumber\\
&\leq
\overline a\,\operatorname{Lip}(g)\,RT
<
\infty.
\label{eq:coupled-drift-integrability}
\end{align}

We next consider the jump contribution. For a spike of neuron $i$, the
absolute variation of $H$ is bounded by
\[
q_i(X_s^i+\widehat X_s^i)+2c_i.
\]
Hence the compensator of the absolute jump variation is bounded by
\begin{align}
&
\mathbb E
\int_0^{t\wedge\tau_R}
\sum_{i\geq1}
\left[
\phi_i(X_s^i)+\phi_i(\widehat X_s^i)
\right]
\left[
q_i(X_s^i+\widehat X_s^i)+2c_i
\right]
\,ds.
\label{eq:coupled-jump-integrability}
\end{align}

By the Lipschitz property of the firing rates,
\[
\phi_i(x)
\leq
\phi_i(0)
+
\operatorname{Lip}(\phi_i)x,
\qquad x\geq0.
\]
Moreover, on $[0,\tau_R]$,
\[
\|X_s\|_q+\|\widehat X_s\|_q
\leq R.
\]
Together with the summability assumptions on the firing rates and the
coefficients $c_i$, this implies that the right-hand side of
\eqref{eq:coupled-jump-integrability} is finite.

Therefore,
\[
\mathbb E
\int_0^{t\wedge\tau_R}
\left|
\mathcal L_{2,s}^{\eta}
H(X_s,\widehat X_s)
\right|
\,ds
<
\infty,
\]
and the drift and jump compensators appearing in the extended Dynkin formula
are integrable. Thus all the hypotheses of
Proposition~\ref{prop:elephant-coupled-dynkin} are satisfied with
$\tau=\tau_R$.
\end{proof}
\begin{theorem} 
\label{thm:conditional-wasserstein}

Fix a deterministic c\`adl\`ag reinforcement profile
$\eta=(\eta_t)_{0\leq t\leq T}.
$ Assume that conditions
\eqref{eq:phi-g-order},
\eqref{eq:g-strong-monotone},
\eqref{eq:reset-compatibility}, and
\eqref{eq:general-contraction-rate}
hold, and that the contraction constant (d) appearing in
\eqref{eq:general-contraction-rate} satisfies
$d>0.
$ Then, for every pair of probability measures
$\mu,\nu\in\mathcal P_1(\mathcal X_q)$ with finite first $H$-moment and every
$0\leq t\leq T$,
\begin{equation*}
W_{1,H}
\left(
\mu P_{0,t}^{\eta},
\nu P_{0,t}^{\eta}
\right)
\leq
e^{-dt}
W_{1,H}(\mu,\nu).
\end{equation*}
Here, $P_{0,t}^{\eta}$ denotes the transition kernel of the
membrane-potential process evolving in the prescribed reinforcement
environment $\eta$.
\end{theorem}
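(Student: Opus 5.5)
The strategy is the standard coupling one: fix an optimal coupling of the initial laws, run the maximal coupling dynamics in the common environment $\eta$, and turn the generator inequality of Lemma~\ref{lem:elephant-coupling-generator} into an exponential decay of $\mathbb E H(X_t,\widehat X_t)$ via the supermartingale property of Proposition~\ref{prop:elephant-coupled-dynkin}.

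First, since $\mu,\nu$ have finite first $H$-moment, $W_{1,H}(\mu,\nu)<\infty$. By lower semicontinuity of $H$ and tightness there is an optimal coupling $\pi$ of $(\mu,\nu)$ with $\int H\,d\pi=W_{1,H}(\mu,\nu)$; if one prefers to avoid existence, take a $\delta$-optimal coupling and let $\delta\downarrow0$ at the end. Draw $(X_0,\widehat X_0)\sim\pi$. Both marginals lie in $\mathcal X_q=\mathcal X_h$ almost surely, so the non-explosion construction of Section~\ref{sec:nonexplosion} applies to each coordinate. The environment is deterministic and the Lyapunov bounds of Lemma~\ref{lem:elephant-generator-bound} are uniform in admissible $(n,s)$, so the time-inhomogeneous version goes through verbatim. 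We then build the coupled process $Z_t=(X_t,\widehat X_t)$ with the maximal coupling of firing clocks and common signs at simultaneous spikes, sampled from $Q^{\eta_t}$. Each marginal is then a copy of the membrane-potential process in environment $\eta$. Hence $\operatorname{Law}(X_t)=\mu P^\eta_{0,t}$ and $\operatorname{Law}(\widehat X_t)=\nu P^\eta_{0,t}$, and therefore
$$W_{1,H}(\mu P^\eta_{0,t},\nu P^\eta_{0,t})\le \mathbb E H(Z_t).$$
Verifying the marginal property is routine: the rates of simultaneous and unmatched spikes sum to $\phi_i(x^i)$ and $\phi_i(y^i)$ respectively, and the sign law is $Q^{\eta_t}$ in every case.

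Next, apply Proposition~\ref{prop:elephant-coupled-dynkin} with $\tau=\tau_R\wedge T$, whose hypotheses are guaranteed by Lemma~\ref{lem:elephant-coupled-integrability}. Combined with Lemma~\ref{lem:elephant-coupling-generator}, this shows that $e^{d(t\wedge\tau_R)}H(Z_{t\wedge\tau_R})$ is a nonnegative supermartingale. Since $\tau_R$ depends on $\|X_0\|_q$, we should localize further on $\{\|X_0\|_q+\|\widehat X_0\|_q<R\}$, or simply condition on $Z_0$. This gives
$$\mathbb E\bigl[e^{d(t\wedge\tau_R)}H(Z_{t\wedge\tau_R})\bigr]\le \mathbb E H(Z_0).$$

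The main obstacle is removing the localization $R\to\infty$. Two facts are needed. The first is $\tau_R\uparrow\infty$ almost surely, or at least $\tau_R\ge T$ eventually. This follows from non-explosion (Theorem~\ref{thm:elephant-nonexplosion}) applied to each marginal, together with the almost sure finiteness of $\sup_{s\le T}h(X_s)$: $h$ does not increase along the flow, and finitely many spikes each add at most $C_*$. The second is passing to the limit in the expectation. Since $H(Z_{t\wedge\tau_R})\to H(Z_t)$ almost surely by right-continuity and eventual constancy of the stopping, and everything is nonnegative, Fatou's lemma gives
$$e^{dt}\,\mathbb E H(Z_t)\le\liminf_{R\to\infty}\mathbb E\bigl[e^{d(t\wedge\tau_R)}H(Z_{t\wedge\tau_R})\bigr]\le\mathbb E H(Z_0)=W_{1,H}(\mu,\nu),$$
which yields the claim. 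I expect the only delicate point to be the dependence of $\tau_R$ on the initial data, which makes $\mathbb E\int_0^{\tau}|\cdot|$ finite only after conditioning on $Z_0$. I would therefore run the argument for deterministic initial pairs $(x,y)$ and integrate against $\pi$ at the end.
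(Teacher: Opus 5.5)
Your proposal is correct and follows the paper's proof essentially step by step. You couple the initial laws, run the maximal coupling with common signs in $\eta$, and combine Proposition~\ref{prop:elephant-coupled-dynkin}, Lemma~\ref{lem:elephant-coupling-generator} and the localization of Lemma~\ref{lem:elephant-coupled-integrability} to get the stopped supermartingale bound; you then remove $\tau_R$ via non-explosion and Fatou, and conclude because $(X_t,\widehat X_t)$ couples $\mu P^{\eta}_{0,t}$ and $\nu P^{\eta}_{0,t}$.

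The paper takes an arbitrary $\pi\in\Pi(\mu,\nu)$ and passes to the infimum at the end, which sidesteps the existence of an optimal coupling. Your extra conditioning on $Z_0$ is harmless but unnecessary: on $\{s<\tau_R\}$ the state already lies in the $R$-ball whatever the initial data, and $\tau_R=0$ otherwise.
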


\begin{proof}
Let
$\pi\in\Pi(\mu,\nu)
$ be an arbitrary coupling of the initial laws. Construct
\begin{equation*}
(X_0,\widehat X_0)\sim\pi
\end{equation*}
and, conditionally on this initial pair, let
$Z_t=(X_t,\widehat X_t)
$ be the maximal coupling of the two membrane-potential processes evolving in
the common reinforcement environment $\eta$.

For $R>0$, let $\tau_R$ be the localization time introduced in
Lemma~\ref{lem:elephant-coupled-integrability}. By
Lemma~\ref{lem:elephant-coupled-integrability}, the integrability assumptions
of Proposition~\ref{prop:elephant-coupled-dynkin} hold up to $(\tau_R)$.
Moreover, by Lemma~\ref{lem:elephant-coupling-generator} and assumptions
\eqref{eq:phi-g-order},
\eqref{eq:g-strong-monotone}, and
\eqref{eq:general-contraction-rate},
\begin{equation*}
\mathcal L_{2,s}^{\eta}H(x,y)
\leq
-dH(x,y)
\end{equation*}
for all relevant $(s)$, $(x)$, and $(y)$.

Proposition~\ref{prop:elephant-coupled-dynkin} therefore implies that
\begin{equation*}
\left(
e^{d(t\wedge\tau_R)}
H\bigl(X_{t\wedge\tau_R},\widehat X_{t\wedge\tau_R}\bigr)
\right)_{0\leq t\leq T}
\end{equation*}
is a nonnegative supermartingale. Hence,
\begin{align*}
\mathbb E
\left[
e^{d(t\wedge\tau_R)}
H\bigl(X_{t\wedge\tau_R},\widehat X_{t\wedge\tau_R}\bigr)
\right]
\leq
\mathbb E H(X_0,\widehat X_0).
\end{align*}

Since
$t\wedge\tau_R\leq t,
$ we have
\begin{equation*}
e^{d(t\wedge\tau_R)}
\geq
1.
\end{equation*}
However, to obtain the desired exponential factor before removing the
localization, we retain the exponential term and pass directly to the limit.

By non-explosion and the finiteness of the weighted norms on compact time
intervals,
\begin{equation*}
\tau_R\uparrow\infty
\qquad\text{almost surely as }R\to\infty.
\end{equation*}
Hence, for every fixed $t\in[0,T]$, there exists almost surely a random index
$R_0$ such that
\begin{equation*}
\tau_R>t
\qquad\text{for all }R\geq R_0.
\end{equation*}
Therefore,
\begin{equation*}
t\wedge\tau_R=t
\end{equation*}
for all sufficiently large $R$, almost surely. Consequently,
\begin{equation*}
e^{d(t\wedge\tau_R)}
H\left(
X_{t\wedge\tau_R},
\widehat X_{t\wedge\tau_R}
\right)
\longrightarrow
e^{dt}
H(X_t,\widehat X_t)
\end{equation*}
almost surely. Consequently,
\[
e^{d(t\wedge\tau_R)}
H\bigl(
X_{t\wedge\tau_R},
\widehat X_{t\wedge\tau_R}
\bigr)
\longrightarrow
e^{dt}H(X_t,\widehat X_t)
\]
almost surely.
 Fatou's lemma now gives
\begin{align*}
e^{dt}
\mathbb E H(X_t,\widehat X_t)
&\leq
\liminf_{R\to\infty}
\mathbb E
\left[
e^{d(t\wedge\tau_R)}
H\bigl(
X_{t\wedge\tau_R},
\widehat X_{t\wedge\tau_R}
\bigr)
\right]
\leq
\mathbb E H(X_0,\widehat X_0).
\end{align*}
Thus,
\begin{equation*}
\mathbb E H(X_t,\widehat X_t)
\leq
e^{-dt}
\mathbb E H(X_0,\widehat X_0).
\end{equation*}

The law of
$(X_t,\widehat X_t)
$ is a coupling of
$\mu P_{0,t}^{\eta}$ and $\nu P_{0,t}^{\eta}$. 
Therefore,
\begin{align*}
W_{1,H}
\left(
\mu P_{0,t}^{\eta},
\nu P_{0,t}^{\eta}
\right)
&\leq
\mathbb E H(X_t,\widehat X_t)
\\
&\leq
e^{-dt}
\mathbb E H(X_0,\widehat X_0)
\\
&=
e^{-dt}
\int_{\mathcal X_q\times\mathcal X_q}
H(x,y)\pi(dx,dy).
\end{align*}
Since $\pi\in\Pi(\mu,\nu)$ was arbitrary, taking the infimum over all
couplings of $\mu$ and $\nu$ yields
\begin{equation*}
W_{1,H}
\left(
\mu P_{0,t}^{\eta},
\nu P_{0,t}^{\eta}
\right)
\leq
e^{-dt}
W_{1,H}(\mu,\nu).
\end{equation*}
\end{proof}

\begin{remark}
The estimate is quenched: it compares membrane-potential processes in one
prescribed reinforcement environment. It does not imply contraction of the
full endogenous process $(X,N,S)$, because unmatched spikes generally produce
different future reinforcement profiles.
\end{remark}
\section{Stability with respect to the reinforcement environment}
\label{sec:environment-stability}

The preceding contraction result compares two membrane-potential processes
evolving under the same prescribed reinforcement profile. We now allow the two
processes to evolve under different reinforcement environments and quantify the
resulting perturbation.

Fix $T>0$ and let
\begin{equation*}
\eta_t=(n_t,s_t)
\qquad\text{and}\qquad
\widehat\eta_t=(\widehat n_t,\widehat s_t),
\qquad 0\leq t\leq T,
\end{equation*}
be two deterministic admissible c\`adl\`ag reinforcement profiles.

For every neuron $j$, define
\begin{equation*}
\theta_j^\eta(t)
:=
Q_j^{\eta_t}(+1)
\end{equation*}
and
\begin{equation*}
\theta_j^{\widehat\eta}(t)
:=
Q_j^{\widehat\eta_t}(+1).
\end{equation*}
Equivalently,
\begin{equation}
\theta_j^\eta(t)
=
\frac12
\left(
1+(2p-1)r_j(\eta_t)
\right),
\label{eq:environment-positive-sign-law}
\end{equation}
where
\begin{equation*}
r_j(\eta_t)
:=
\begin{cases}
\dfrac{s_t^j}{n_t^j},&n_t^j\geq1, \\
0,&n_t^j=0.
\end{cases}
\end{equation*}
The analogous definition is used for
$r_j(\widehat\eta_t)$.

For every presynaptic neuron $i$, define the local discrepancy between the two
reinforcement environments by
\begin{equation}
D_i(\eta_t,\widehat\eta_t)
:=
\sum_{j\in\mathcal N_i}
q_j
\left(
w_{ij}+v_{ij}
\right)
\left|
\theta_j^\eta(t)
-
\theta_j^{\widehat\eta}(t)
\right|.
\label{eq:local-environment-discrepancy}
\end{equation}

Since
\begin{equation*}
\theta_j^\eta(t)
-
 \theta_j^{\widehat\eta}(t)
=\frac{2p-1}{2}
\left(
r_j(\eta_t)
-
r_j(\widehat\eta_t)
\right),
\end{equation*}
we also have
\begin{align}
D_i(\eta_t,\widehat\eta_t)
={}&
\frac{|2p-1|}{2}
\sum_{j\in\mathcal N_i}
q_j
\left(
w_{ij}+v_{ij}
\right)
\nonumber 
\left|
r_j(\eta_t)-
r_j(\widehat\eta_t)
\right|.
\label{eq:local-environment-discrepancy-ratio}
\end{align}

\begin{lemma} 
\label{lem:maximal-coupling-different-sign-laws}

Fix a postsynaptic neuron \(j\) and a time \(t\). Let
\begin{equation*}
\theta_j^\eta(t)
:=
Q_j^{\eta_t}(+1),
\qquad
\theta_j^{\widehat\eta}(t)
:=
Q_j^{\widehat\eta_t}(+1).
\end{equation*}
There exists a coupling
$\left(
\varepsilon_j^\eta,
\varepsilon_j^{\widehat\eta}
\right)
$ of the two sign laws
$Q_j^{\eta_t}
$ and $Q_j^{\widehat\eta_t}
$ 
such that
\begin{equation}
\mathbb P
\left(
\varepsilon_j^\eta
\neq
\varepsilon_j^{\widehat\eta}
\right)
=
\left|
\theta_j^\eta(t)
-
\theta_j^{\widehat\eta}(t)
\right|.
\label{eq:sign-mismatch-probability}
\end{equation}

Moreover, define the corresponding synaptic increments by
\begin{equation*}
\xi_{ij}^\eta
=
w_{ij}
\mathbf 1_{\{\varepsilon_j^\eta=+1\}}
-
v_{ij}
\mathbf 1_{\{\varepsilon_j^\eta=-1\}},
\end{equation*}
and
\begin{equation*}
\xi_{ij}^{\widehat\eta}
=
w_{ij}
\mathbf 1_{\{\varepsilon_j^{\widehat\eta}=+1\}}
-
v_{ij}
\mathbf 1_{\{\varepsilon_j^{\widehat\eta}=-1\}}.
\end{equation*}
Then
\begin{equation}
\mathbb E
\left[
\left|
\xi_{ij}^\eta
-
\xi_{ij}^{\widehat\eta}
\right|
\right]
=
\left(
w_{ij}+v_{ij}
\right)
\left|
\theta_j^\eta(t)
-
\theta_j^{\widehat\eta}(t)
\right|.
\label{eq:expected-synaptic-mismatch}
\end{equation}
\end{lemma}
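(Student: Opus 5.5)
The coupling will be built from a single uniform random variable. Let $U$ be uniform on $[0,1]$ and set
\[
\varepsilon_j^\eta:=\mathbf 1_{\{U\leq\theta_j^\eta(t)\}}-\mathbf 1_{\{U>\theta_j^\eta(t)\}},
\qquad
\varepsilon_j^{\widehat\eta}:=\mathbf 1_{\{U\leq\theta_j^{\widehat\eta}(t)\}}-\mathbf 1_{\{U>\theta_j^{\widehat\eta}(t)\}}.
\]
This is exactly the quantile construction used for the marking map $F_k$ in Section~\ref{sec:nonexplosion}, now applied with a common $u$ to two different thresholds. Because $\mathbb P(U\leq\theta)=\theta$ and $\theta_j^\eta(t),\theta_j^{\widehat\eta}(t)\in[0,1]$, each coordinate has the law $Q_j^{\eta_t}$, respectively $Q_j^{\widehat\eta_t}$. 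Here $\theta\in[0,1]$ follows from $|r_j|\leq 1$ and $|2p-1|\leq1$ in \eqref{eq:environment-positive-sign-law}.

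Next I identify the mismatch event. The two signs differ exactly when $U$ falls strictly between the two thresholds, that is, when
\[
\min\{\theta_j^\eta(t),\theta_j^{\widehat\eta}(t)\}<U\leq\max\{\theta_j^\eta(t),\theta_j^{\widehat\eta}(t)\}.
\]
This interval has Lebesgue measure $|\theta_j^\eta(t)-\theta_j^{\widehat\eta}(t)|$, which gives \eqref{eq:sign-mismatch-probability}. I could also remark that this coupling is maximal, since the total variation distance between two laws on $\{-1,+1\}$ equals the difference of their masses at $+1$. This is not needed for the statement.

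For \eqref{eq:expected-synaptic-mismatch}, note that $\xi_{ij}^\eta=\xi_{ij}^{\widehat\eta}$ on the event $\{\varepsilon_j^\eta=\varepsilon_j^{\widehat\eta}\}$. On the complementary event, one increment equals $+w_{ij}$ and the other equals $-v_{ij}$, so $|\xi_{ij}^\eta-\xi_{ij}^{\widehat\eta}|=w_{ij}+v_{ij}$. Hence
\[
\mathbb E\bigl|\xi_{ij}^\eta-\xi_{ij}^{\widehat\eta}\bigr|=(w_{ij}+v_{ij})\,\mathbb P\bigl(\varepsilon_j^\eta\neq\varepsilon_j^{\widehat\eta}\bigr),
\]
and the first part of the lemma finishes the proof.

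There is no real obstacle here. The only point that needs care is confirming that both thresholds lie in $[0,1]$, so that the quantile construction produces genuine probability laws, and this was handled in the first step.
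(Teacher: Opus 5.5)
Your proposal is correct and follows essentially the same route as the paper: a common uniform variable thresholded at $\theta_j^\eta(t)$ and $\theta_j^{\widehat\eta}(t)$, a mismatch event that is an interval of length $|\theta_j^\eta(t)-\theta_j^{\widehat\eta}(t)|$, and the observation that a sign mismatch forces $|\xi_{ij}^\eta-\xi_{ij}^{\widehat\eta}|=w_{ij}+v_{ij}$. The only differences are minor. You explicitly check that the thresholds lie in $[0,1]$, which the paper leaves implicit, and the paper additionally computes the total variation distance to note that the coupling is maximal, which, as you say, is not needed for the stated identities.
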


\begin{proof}
The two sign laws are probability measures on the two-point space
$\{-1,+1\}
$. Their probabilities of the value \(+1\) are
\begin{equation*}
Q_j^{\eta_t}(+1)
=
\theta_j^\eta(t),
\qquad
Q_j^{\widehat\eta_t}(+1)
=
\theta_j^{\widehat\eta}(t).
\end{equation*}
Consequently, their probabilities of the value \(-1\) are
\begin{equation*}
Q_j^{\eta_t}(-1)
=
1-\theta_j^\eta(t),
\qquad
Q_j^{\widehat\eta_t}(-1)
=
1-\theta_j^{\widehat\eta}(t).
\end{equation*}

The total variation distance between the two sign laws is therefore
\begin{align*}
\left\|
Q_j^{\eta_t}
-
Q_j^{\widehat\eta_t}
\right\|_{\mathrm{TV}}
&=
\frac12
\sum_{\sigma\in\{-1,+1\}}
\left|
Q_j^{\eta_t}(\sigma)
-
Q_j^{\widehat\eta_t}(\sigma)
\right|
\\
&=
\frac12
\left(
\left|
\theta_j^\eta(t)
-
\theta_j^{\widehat\eta}(t)
\right|
+
\left|
\bigl(1-\theta_j^\eta(t)\bigr)
-
\bigl(1-\theta_j^{\widehat\eta}(t)\bigr)
\right|
\right)
\\
&=
\left|
\theta_j^\eta(t)
-
\theta_j^{\widehat\eta}(t)
\right|.
\end{align*}
Let \(U\) be uniformly distributed on \([0,1]\), and define
\begin{equation*}
\varepsilon_j^\eta
=
\begin{cases}
+1, & U\leq\theta_j^\eta(t),\\
-1, & U>\theta_j^\eta(t),
\end{cases}
\end{equation*}
and
\begin{equation*}
\varepsilon_j^{\widehat\eta}
=
\begin{cases}
+1, & U\leq\theta_j^{\widehat\eta}(t),\\
-1, & U>\theta_j^{\widehat\eta}(t).
\end{cases}
\end{equation*}
Then \(\varepsilon_j^\eta\) and
\(\varepsilon_j^{\widehat\eta}\) have respective laws
\(Q_j^{\eta_t}\) and \(Q_j^{\widehat\eta_t}\). Moreover, they differ exactly
when \(U\) lies between
\(\theta_j^\eta(t)\) and
\(\theta_j^{\widehat\eta}(t)\). Hence,
\begin{equation*}
\mathbb P
\left(
\varepsilon_j^\eta
\neq
\varepsilon_j^{\widehat\eta}
\right)
=
\left|
\theta_j^\eta(t)
-
\theta_j^{\widehat\eta}(t)
\right|.
\end{equation*}
Since this probability equals the total variation distance between the two
sign laws, the coupling is maximal. 
This proves \eqref{eq:sign-mismatch-probability}.

We now compare the corresponding synaptic increments. If
$\varepsilon_j^\eta
=
\varepsilon_j^{\widehat\eta},
$ then 
$\xi_{ij}^\eta
=
\xi_{ij}^{\widehat\eta}$, and hence
\begin{equation*}
\left|
\xi_{ij}^\eta
-
\xi_{ij}^{\widehat\eta}
\right|
=
0.
\end{equation*}

If the signs differ, then one synaptic increment equals \(w_{ij}\), while the
other equals \(-v_{ij}\). Therefore,
\begin{equation*}
\left|
\xi_{ij}^\eta
-
\xi_{ij}^{\widehat\eta}
\right|
=
\left|
w_{ij}-(-v_{ij})
\right|
=
w_{ij}+v_{ij}.
\end{equation*}
Thus,
\begin{equation*}
\left|
\xi_{ij}^\eta
-
\xi_{ij}^{\widehat\eta}
\right|
=
\left(
w_{ij}+v_{ij}
\right)
\mathbf 1_{\{
\varepsilon_j^\eta
\neq
\varepsilon_j^{\widehat\eta}
\}}.
\end{equation*}
Taking expectations and using
\eqref{eq:sign-mismatch-probability}, we obtain
\begin{align*}
\mathbb E
\left[
\left|
\xi_{ij}^\eta
-
\xi_{ij}^{\widehat\eta}
\right|
\right]
=
\left(
w_{ij}+v_{ij}
\right)
\mathbb P
\left(
\varepsilon_j^\eta
\neq
\varepsilon_j^{\widehat\eta}
\right)
=
\left(
w_{ij}+v_{ij}
\right)
\left|
\theta_j^\eta(t)
-
\theta_j^{\widehat\eta}(t)
\right|.
\end{align*}
This proves \eqref{eq:expected-synaptic-mismatch}.
\end{proof}

We now couple two membrane-potential processes
$X_t$
and $\widehat X_t$ 
 evolving under the profiles $\eta$ and $\widehat\eta$, respectively. Their
firing clocks are coupled maximally. At a simultaneous spike of neuron $i$, the
reinforced signs received by each postsynaptic neuron are coupled according to
Lemma~\ref{lem:maximal-coupling-different-sign-laws}. Sign couplings associated
with distinct postsynaptic neurons are conditionally independent.

\begin{lemma} 
\label{lem:perturbed-coupling-generator}

Assume that
\eqref{eq:phi-g-order}--\eqref{eq:general-contraction-rate}
hold. Then the coupling generator associated with the two prescribed
environments \(\eta\) and \(\widehat\eta\) satisfies
\begin{align}
\mathcal L_{2,t}^{\eta,\widehat\eta}H(x,y)
\leq
-dH(x,y)
+
\sum_{i=1}^{\infty}
\min\left\{
\phi_i(x^i),
\phi_i(y^i)
\right\}
D_i(\eta_t,\widehat\eta_t).
\label{eq:perturbed-coupling-generator}
\end{align}
\end{lemma}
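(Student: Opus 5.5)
The plan is to repeat the proof of Lemma~\ref{lem:elephant-coupling-generator} term by term and isolate the one place where the two environments differ. I would split the generator as
\[
\mathcal L_{2,t}^{\eta,\widehat\eta}H=I_{\mathrm{flow}}+I_{\mathrm{jump}}.
\]
The flow term does not involve the reinforcement profile. It is therefore exactly \eqref{eq:flow-contribution-common-environment}, namely $-\sum_i q_ia_i|g(x^i)-g(y^i)|$.

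For the jump term, fix $i$ and assume $x^i\le y^i$; the other ordering is symmetric. Then $\lambda_i=\phi_i(x^i)=\min\{\phi_i(x^i),\phi_i(y^i)\}$ is the rate of simultaneous spikes.

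Unmatched spikes use only one environment, so the environment plays no role there. The estimate \eqref{eq:unmatched-spike-contribution} carries over verbatim, since the bound $|(y^j+\xi_{ij})_+-y^j|\le c_{ij}$ holds whatever the sign law is.

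At a simultaneous spike, the reset still contributes $-\lambda_iq_i(y^i-x^i)$. For each $j\in\mathcal N_i$, the two systems now receive $\xi^\eta_{ij}$ and $\xi^{\widehat\eta}_{ij}$, which are coupled as in Lemma~\ref{lem:maximal-coupling-different-sign-laws}. I would use the triangle inequality together with Lemma~\ref{lem:positive-part-elephant}:
\[
|(x^j+\xi^\eta_{ij})_+-(y^j+\xi^{\widehat\eta}_{ij})_+|
\le |(x^j+\xi^\eta_{ij})_+-(y^j+\xi^{\eta}_{ij})_+|+|(y^j+\xi^\eta_{ij})_+-(y^j+\xi^{\widehat\eta}_{ij})_+|
\le |x^j-y^j|+|\xi^\eta_{ij}-\xi^{\widehat\eta}_{ij}|.
\]
Taking the expectation over the coupled signs and applying \eqref{eq:expected-synaptic-mismatch}, the extra postsynaptic increase is at most $q_j(w_{ij}+v_{ij})|\theta^\eta_j(t)-\theta^{\widehat\eta}_j(t)|$. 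Summing over $j\in\mathcal N_i$ gives $D_i(\eta_t,\widehat\eta_t)$. Only one-dimensional marginals of the sign coupling enter, so the conditional independence across $j$ is not needed.

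Multiplying by the simultaneous rate $\lambda_i$ produces exactly the extra term $\min\{\phi_i(x^i),\phi_i(y^i)\}D_i$. The remaining terms are the same as in the common-environment case. Using \eqref{eq:reset-compatibility}, I would again reach $I^i_{\mathrm{jump}}\le c_i|\phi_i(x^i)-\phi_i(y^i)|+\min\{\phi_i(x^i),\phi_i(y^i)\}D_i$.

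Summing over $i$, I would then follow the last steps of Lemma~\ref{lem:elephant-coupling-generator}: apply \eqref{eq:phi-g-order}, then \eqref{eq:g-strong-monotone}, then \eqref{eq:general-contraction-rate}. This turns the environment-independent part into $-dH(x,y)$ and leaves the perturbation sum untouched, which gives \eqref{eq:perturbed-coupling-generator}.

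The main obstacle is the bookkeeping at simultaneous spikes. The triangle-inequality split has to keep the Lipschitz contraction of the common part intact and isolate the sign mismatch in exactly the expected form. A secondary point is checking that summing the perturbation over infinitely many $i$ is legitimate. The right side may be $+\infty$, and then the inequality is trivial; when it is finite, the rearrangement is justified because all the terms involved are nonnegative.
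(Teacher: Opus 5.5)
Your proposal is correct and takes essentially the same route as the paper. You reuse the flow and unmatched-spike estimates of Lemma~\ref{lem:elephant-coupling-generator}, bound the extra postsynaptic increase at simultaneous spikes by $q_j|\xi^\eta_{ij}-\xi^{\widehat\eta}_{ij}|$ via the $1$-Lipschitz positive part, then apply Lemma~\ref{lem:maximal-coupling-different-sign-laws} and multiply by the simultaneous rate $\min\{\phi_i(x^i),\phi_i(y^i)\}$. Your bookkeeping is slightly more careful than the paper's: you explicitly retain the simultaneous-reset decrease $-\lambda_iq_i(y^i-x^i)$ that the step using \eqref{eq:reset-compatibility} requires, whereas the paper only asserts that the common-environment estimates carry over.
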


\begin{proof}
The deterministic-flow contribution is exactly the same as in
Lemma~\ref{lem:elephant-coupling-generator}, since the reinforcement
environment affects only the synaptic sign distributions and not the
deterministic evolution of the membrane potentials.

We next consider the unmatched firing events. Suppose, for instance, that
neuron $i$ spikes only in the second system. Then the first system is left
unchanged, while the second system resets its $i$-th coordinate and, for every
$j\in\mathcal N_i$, receives a synaptic increment
$\xi_{ij}^{\widehat\eta}$ distributed according to the environment
$\widehat\eta$. For the postsynaptic coordinates we have
\begin{align*}
\left|
x^j-
\left(y^j+\xi_{ij}^{\widehat\eta}\right)_+
\right|
-
|x^j-y^j|
\leq
\left|
\left(y^j+\xi_{ij}^{\widehat\eta}\right)_+
-y^j
\right|
\leq
\left|
\xi_{ij}^{\widehat\eta}
\right|
\leq
c_{ij}.
\end{align*}
Hence the total weighted postsynaptic increase produced by such an unmatched
spike is bounded by
\begin{equation*}
\sum_{j\in\mathcal N_i}q_jc_{ij}
=c_i.
\end{equation*}
This estimate is uniform in the reinforcement environment and therefore does
not require any comparison between the sign laws associated with $\eta$ and
$\widehat\eta$.

The same argument applies to unmatched spikes occurring only in the first
system. Consequently, the deterministic-flow contribution together with all
unmatched firing contributions can be estimated exactly as in
Lemma~\ref{lem:elephant-coupling-generator}. Under assumptions
\eqref{eq:phi-g-order}--\eqref{eq:general-contraction-rate}, these terms produce
the contractive bound
$-dH(x,y).
$

It remains only to estimate the additional contribution arising at
simultaneous spikes, where both systems jump but their synaptic increments may
have different conditional laws because the reinforcement environments
$\eta$ and $\widehat\eta$ are different.

Suppose that neuron \(i\) spikes simultaneously in the two coupled systems.
The \(i\)-th coordinate is reset to zero in both copies. Hence its contribution
to the post-jump distance is zero and, in particular, does not increase the
distance.

For every postsynaptic neuron \(j\in\mathcal N_i\), the corresponding updates
are
\begin{equation*}
x^j
\longmapsto
\left(
x^j+\xi_{ij}^{\eta}
\right)_+,
\end{equation*}
and
\begin{equation*}
y^j
\longmapsto
\left(
y^j+\xi_{ij}^{\widehat\eta}
\right)_+.
\end{equation*}
Since the positive-part map is \(1\)-Lipschitz,
\begin{align*}
&
\left|
\left(
x^j+\xi_{ij}^{\eta}
\right)_+
-
\left(
y^j+\xi_{ij}^{\widehat\eta}
\right)_+
\right|
\leq
\left|
x^j-y^j
+
\xi_{ij}^{\eta}
-
\xi_{ij}^{\widehat\eta}
\right|
\leq
\left|
x^j-y^j
\right|
+
\left|
\xi_{ij}^{\eta}
-
\xi_{ij}^{\widehat\eta}
\right|.
\end{align*}
Therefore, relative to the pre-jump distance at coordinate \(j\), the possible
increase is bounded by
$q_j
\left|
\xi_{ij}^{\eta}
-
\xi_{ij}^{\widehat\eta}
\right|.
$ Taking expectation with respect to the maximal coupling of the two sign laws
and applying
Lemma~\ref{lem:maximal-coupling-different-sign-laws}, we obtain
\begin{align*}
q_j
\mathbb E
\left[
\left|
\xi_{ij}^{\eta}
-
\xi_{ij}^{\widehat\eta}
\right|
\right]
=
q_j
\left(
w_{ij}+v_{ij}
\right)
\left|
\theta_j^\eta(t)
-
\theta_j^{\widehat\eta}(t)
\right|.
\end{align*}
Summing over all \(j\in\mathcal N_i\), define
\begin{equation*}
D_i(\eta_t,\widehat\eta_t)
:=
\sum_{j\in\mathcal N_i}
q_j
\left(
w_{ij}+v_{ij}
\right)
\left|
\theta_j^\eta(t)
-
\theta_j^{\widehat\eta}(t)
\right|.
\end{equation*}
Thus, the additional expected increase of \(H\) caused by a simultaneous spike
of neuron \(i\) is bounded by
$D_i(\eta_t,\widehat\eta_t).
$  Under the maximal coupling of the firing events, simultaneous spikes of neuron
\(i\) occur at rate
$\min\left\{
\phi_i(x^i),
\phi_i(y^i)
\right\}.
$ Therefore, the total additional contribution due to the different sign laws is
bounded by
\begin{equation*}
\sum_{i=1}^{\infty}
\min\left\{
\phi_i(x^i),
\phi_i(y^i)
\right\}
D_i(\eta_t,\widehat\eta_t).
\end{equation*}
Combining this estimate with the contractive contribution
\(-dH(x,y)\) proves
\eqref{eq:perturbed-coupling-generator}.
\end{proof}

The previous lemma gives a general estimate in which the perturbation depends on
the current firing intensities. A deterministic Wasserstein bound follows under
a bounded-rate assumption.

For the remainder of this section only, we impose the additional bounded-rate
assumption that, for every $i\in\mathbb N$, there exists a finite constant
$\overline\phi_i$ such that
\begin{equation}
0\leq
\phi_i(x)
\leq
\overline\phi_i,
\qquad
x\geq0.
\label{eq:bounded-firing-rates-environment}
\end{equation}
Define the global discrepancy between the two reinforcement environments by
\begin{equation}
\mathcal D(\eta_t,\widehat\eta_t)
:=
\sum_{i=1}^{\infty}
\overline\phi_i
D_i(\eta_t,\widehat\eta_t).
\label{eq:global-environment-discrepancy}
\end{equation}
We assume that
\begin{equation}
\int_0^T
\mathcal D(\eta_t,\widehat\eta_t)dt
<\infty.
\label{eq:integrable-environment-discrepancy}
\end{equation}

\begin{theorem} 
\label{thm:wasserstein-different-environments}

Assume that
\eqref{eq:phi-g-order}--\eqref{eq:general-contraction-rate},
\eqref{eq:bounded-firing-rates-environment}, and
\eqref{eq:integrable-environment-discrepancy}
hold. Then, for every pair of probability measures
\(\mu,\nu\in\mathcal P_1(\mathcal X_q)\) with finite first \(H\)-moment,
\begin{align}
W_{1,H}
\left(
\mu P_{0,t}^{\eta},
\nu P_{0,t}^{\widehat\eta}
\right)
\leq
e^{-dt}W_{1,H}(\mu,\nu)
+
\int_0^t
e^{-d(t-u)}
\mathcal D(\eta_u,\widehat\eta_u)
du,
\label{eq:wasserstein-different-environments}
\end{align}
for every \(0\leq t\leq T\).
\end{theorem}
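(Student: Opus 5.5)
We follow the template of the proof of Theorem~\ref{thm:conditional-wasserstein}, replacing the pure contraction inequality by the perturbed generator bound of Lemma~\ref{lem:perturbed-coupling-generator}.

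\textbf{Step 1: construction of the coupling.} Fix an arbitrary $\pi\in\Pi(\mu,\nu)$ and draw $(X_0,\widehat X_0)\sim\pi$. Conditionally on this pair, let $Z_t=(X_t,\widehat X_t)$ be the coupled process described before Lemma~\ref{lem:perturbed-coupling-generator}:
\begin{itemize}
\item the firing clocks are coupled maximally;
\item at simultaneous spikes, the signs are coupled postsynaptic neuron by postsynaptic neuron as in Lemma~\ref{lem:maximal-coupling-different-sign-laws}.
\end{itemize}
The marginals of $Z$ are the membrane-potential processes in the environments $\eta$ and $\widehat\eta$. Hence the law of $Z_t$ is a coupling of $\mu P_{0,t}^{\eta}$ and $\nu P_{0,t}^{\widehat\eta}$.

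\textbf{Step 2: localized Dynkin formula.} We apply Proposition~\ref{prop:elephant-coupled-dynkin} to the time-dependent functional $e^{ds}H(Z_s)$, stopped at $\tau_R\wedge t$, where $\tau_R$ is the localization time of Lemma~\ref{lem:elephant-coupled-integrability}. The integrability hypotheses are verified exactly as in that lemma. The only new term is the discrepancy, which is bounded by $\mathcal D(\eta_s,\widehat\eta_s)$ thanks to \eqref{eq:bounded-firing-rates-environment}, and it is integrable by \eqref{eq:integrable-environment-discrepancy}. Integration by parts, as in the second part of Proposition~\ref{prop:elephant-coupled-dynkin}, gives
\[
\mathbb E\bigl[e^{d(t\wedge\tau_R)}H(Z_{t\wedge\tau_R})\bigr]
=\mathbb E H(Z_0)+\mathbb E\int_0^{t\wedge\tau_R}e^{ds}\bigl[dH(Z_{s-})+\mathcal L_{2,s}^{\eta,\widehat\eta}H(Z_{s-})\bigr]ds.
\]
By Lemma~\ref{lem:perturbed-coupling-generator}, together with $\min\{\phi_i(x^i),\phi_i(y^i)\}\le\overline\phi_i$, the bracket is at most $\mathcal D(\eta_s,\widehat\eta_s)$. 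Since this bound is nonnegative and deterministic, we may enlarge the integration range to $[0,t]$:
\[
\mathbb E\bigl[e^{d(t\wedge\tau_R)}H(Z_{t\wedge\tau_R})\bigr]\le \mathbb E H(Z_0)+\int_0^t e^{du}\mathcal D(\eta_u,\widehat\eta_u)\,du.
\]

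\textbf{Step 3: removing the localization.} As in Theorem~\ref{thm:conditional-wasserstein}, non-explosion gives $\tau_R\uparrow\infty$ almost surely. Fatou's lemma then yields the same bound for $e^{dt}\mathbb E H(Z_t)$. Multiplying by $e^{-dt}$ gives
\[
\mathbb E H(Z_t)\le e^{-dt}\mathbb E H(Z_0)+\int_0^t e^{-d(t-u)}\mathcal D(\eta_u,\widehat\eta_u)\,du.
\]
The left-hand side dominates $W_{1,H}\bigl(\mu P_{0,t}^{\eta},\nu P_{0,t}^{\widehat\eta}\bigr)$. Taking the infimum over $\pi\in\Pi(\mu,\nu)$ gives \eqref{eq:wasserstein-different-environments}.

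\textbf{Main obstacle.} The delicate step is the justification in Step 2. Non-explosion must be checked for each marginal in its own prescribed (time-inhomogeneous) environment. The localization argument of Section~\ref{sec:nonexplosion} transfers because it used only the bounds $|\xi_{kj}|\le c_{kj}$, which hold in any environment. The integrability of the sign-mismatch jumps must also be checked: they are bounded by $q_j(w_{ij}+v_{ij})$ and occur at rate at most $\overline\phi_i$. Finally, Proposition~\ref{prop:elephant-coupled-dynkin} is stated for $H$ and not for $e^{ds}H$. We would handle this either by redoing its integration-by-parts step with the extra inhomogeneous term, or by applying the Dynkin formula to $H$ and then using a Gronwall/variation-of-constants argument on $t\mapsto\mathbb E H(Z_{t\wedge\tau_R})$. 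The first route is the cleaner one and is the one we would try first.
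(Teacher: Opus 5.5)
Your proposal is correct and follows essentially the same route as the paper. Both use the same coupling (maximal clocks plus the sign coupling of Lemma~\ref{lem:maximal-coupling-different-sign-laws}), the bound $(\partial_t+\mathcal L_{2,t}^{\eta,\widehat\eta})(e^{dt}H)\le e^{dt}\mathcal D(\eta_t,\widehat\eta_t)$ from Lemma~\ref{lem:perturbed-coupling-generator} and \eqref{eq:bounded-firing-rates-environment}, localization at $\tau_R$ with the deterministic integral enlarged to $[0,t]$, then Fatou and the infimum over $\pi\in\Pi(\mu,\nu)$. The paper likewise just invokes a ``time-dependent extended Dynkin formula'' for $e^{dt}H$ without reproving Proposition~\ref{prop:elephant-coupled-dynkin} in that form, so the obstacle you flag is handled there at no greater level of detail than in your sketch.
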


\begin{proof}
Let
$\pi\in\Pi(\mu,\nu)
$ be an arbitrary coupling of the initial laws. Choose
\begin{equation*}
(X_0,\widehat X_0)\sim\pi,
\end{equation*}
and construct the coupled process
$Z_t=(X_t,\widehat X_t)
$ in the two prescribed reinforcement environments
\(\eta\) and \(\widehat\eta\).

By Lemma~\ref{lem:perturbed-coupling-generator},
\begin{align*}
\mathcal L_{2,t}^{\eta,\widehat\eta}H(x,y)
\leq
-dH(x,y)
+
\sum_{i=1}^{\infty}
\min\left\{
\phi_i(x^i),
\phi_i(y^i)
\right\}
D_i(\eta_t,\widehat\eta_t).
\end{align*}
Using \eqref{eq:bounded-firing-rates-environment}, we have
\begin{equation*}
\min\left\{
\phi_i(x^i),
\phi_i(y^i)
\right\}
\leq
\overline\phi_i.
\end{equation*}
Hence,
\begin{align*}
\mathcal L_{2,t}^{\eta,\widehat\eta}H(x,y)
\leq
-dH(x,y)
+
\sum_{i=1}^{\infty}
\overline\phi_i
D_i(\eta_t,\widehat\eta_t)
=
-dH(x,y)
+
\mathcal D(\eta_t,\widehat\eta_t).
\end{align*}

Let \(\tau_R\) be the localization time introduced in
Lemma~\ref{lem:elephant-coupled-integrability}. By the localized
integrability estimate, the extended Dynkin formula may be applied up to
\(\tau_R\).

Define
\begin{equation*}
F(t,z)
:=
e^{dt}H(z).
\end{equation*}
Its time-dependent extended generator satisfies
\begin{align*}
\left(
\partial_t
+
\mathcal L_{2,t}^{\eta,\widehat\eta}
\right)
F(t,z)=
e^{dt}
\left[
dH(z)
+
\mathcal L_{2,t}^{\eta,\widehat\eta}H(z)
\right]
\leq
e^{dt}
\mathcal D(\eta_t,\widehat\eta_t).
\end{align*}

Applying the time-dependent extended Dynkin formula to
\(F(t,Z_t)\), stopped at \(\tau_R\), gives
\begin{align*}
&\mathbb E
\left[
e^{d(t\wedge\tau_R)}
H\bigl(
X_{t\wedge\tau_R},
\widehat X_{t\wedge\tau_R}
\bigr)
\right]
\leq
\mathbb E H(X_0,\widehat X_0)
+
\mathbb E
\int_0^{t\wedge\tau_R}
e^{du}
\mathcal D(\eta_u,\widehat\eta_u)
du.
\end{align*}
Since the environments are prescribed, the function
\(\mathcal D(\eta_u,\widehat\eta_u)\) is deterministic. Therefore,
\begin{align*}
\mathbb E
\left[
e^{d(t\wedge\tau_R)}
H\bigl(
X_{t\wedge\tau_R},
\widehat X_{t\wedge\tau_R}
\bigr)
\right]
\leq
\mathbb E H(X_0,\widehat X_0)
+
\int_0^t
e^{du}
\mathcal D(\eta_u,\widehat\eta_u)
du.
\end{align*}

By non-explosion and the finiteness of the weighted norms on compact time
intervals,
$\tau_R\uparrow T
$ almost surely as \(R\to\infty\). Hence, for every fixed \(t\in[0,T]\),
 $t\wedge\tau_R\longrightarrow t
$ almost surely. Consequently,
\begin{equation*}
e^{d(t\wedge\tau_R)}
H\bigl(
X_{t\wedge\tau_R},
\widehat X_{t\wedge\tau_R}
\bigr)
\longrightarrow
e^{dt}H(X_t,\widehat X_t)
\end{equation*}
almost surely.

Fatou's lemma yields
\begin{align*}
e^{dt}\mathbb E H(X_t,\widehat X_t)\leq
\liminf_{R\to\infty}
\mathbb E
\left[
e^{d(t\wedge\tau_R)}
H\bigl(
X_{t\wedge\tau_R},
\widehat X_{t\wedge\tau_R}
\bigr)
\right]
\leq
\mathbb E H(X_0,\widehat X_0)
+
\int_0^t
e^{du}
\mathcal D(\eta_u,\widehat\eta_u)
du.
\end{align*}
Multiplying by \(e^{-dt}\), we obtain
\begin{align*}
\mathbb E H(X_t,\widehat X_t)
\leq
e^{-dt}
\mathbb E H(X_0,\widehat X_0)
+
\int_0^t
e^{-d(t-u)}
\mathcal D(\eta_u,\widehat\eta_u)
du.
\end{align*}

The law of \((X_t,\widehat X_t)\) is a coupling of
$\mu P_{0,t}^{\eta}
$ and $\nu P_{0,t}^{\widehat\eta}.
$ Therefore,
\begin{align*}
W_{1,H}
\left(
\mu P_{0,t}^{\eta},
\nu P_{0,t}^{\widehat\eta}
\right)
&\leq
\mathbb E H(X_t,\widehat X_t)
\\
\leq
e^{-dt}
&\int_{\mathcal X_q\times\mathcal X_q}
H(x,y)\pi(dx,dy)
+
\int_0^t
e^{-d(t-u)}
\mathcal D(\eta_u,\widehat\eta_u)
du.
\end{align*}
Since \(\pi\in\Pi(\mu,\nu)\) was arbitrary, taking the infimum over all
couplings \(\pi\) gives
\begin{align*}
W_{1,H}
\left(
\mu P_{0,t}^{\eta},
\nu P_{0,t}^{\widehat\eta}
\right)
\leq
e^{-dt}W_{1,H}(\mu,\nu)
+
\int_0^t
e^{-d(t-u)}
\mathcal D(\eta_u,\widehat\eta_u)
du.
\end{align*}
This proves \eqref{eq:wasserstein-different-environments}.
\end{proof}

\begin{corollary} 
\label{cor:recovery-common-environment}
If
$\eta_t=\widehat\eta_t,
 0\leq t\leq T,
$ then
$\mathcal D(\eta_t,\widehat\eta_t)=0,
$ and Theorem~\ref{thm:wasserstein-different-environments} reduces to
\begin{equation*}
W_{1,H}\left(
\mu P_{0,t}^{\eta},
\nu P_{0,t}^{\eta}
\right)
\leq
e^{-dt}W_{1,H}(\mu,\nu).
\end{equation*}
\end{corollary}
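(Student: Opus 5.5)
The plan is to obtain Corollary~\ref{cor:recovery-common-environment} as a direct specialization of Theorem~\ref{thm:wasserstein-different-environments}. The only thing to check is that the perturbation term vanishes identically when the two environments coincide.

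First, suppose $\eta_t=\widehat\eta_t$ for $0\leq t\leq T$. Then $r_j(\eta_t)=r_j(\widehat\eta_t)$ for every $j$. By \eqref{eq:environment-positive-sign-law} this gives $\theta_j^\eta(t)=\theta_j^{\widehat\eta}(t)$, so every summand in \eqref{eq:local-environment-discrepancy} is zero and $D_i(\eta_t,\widehat\eta_t)=0$ for every presynaptic neuron $i$. Consequently each term of the series \eqref{eq:global-environment-discrepancy} is $\overline\phi_i\cdot 0=0$, and $\mathcal D(\eta_t,\widehat\eta_t)=0$ for all $t$. In particular the integrability hypothesis \eqref{eq:integrable-environment-discrepancy} holds trivially.

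Second, I would substitute this into \eqref{eq:wasserstein-different-environments}. The convolution integral $\int_0^t e^{-d(t-u)}\mathcal D(\eta_u,\widehat\eta_u)\,du$ vanishes. Since $P_{0,t}^{\widehat\eta}=P_{0,t}^{\eta}$, the bound becomes exactly the one claimed in Corollary~\ref{cor:recovery-common-environment}.

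The only point needing care is the hypotheses. Theorem~\ref{thm:wasserstein-different-environments} was stated under the bounded-rate assumption \eqref{eq:bounded-firing-rates-environment}, which the corollary does not explicitly require. To avoid relying on it, I would note an alternative route through Lemma~\ref{lem:perturbed-coupling-generator}. In the common-environment case, the maximal sign coupling of Lemma~\ref{lem:maximal-coupling-different-sign-laws} produces identical signs almost surely, so the coupled generator coincides with $\mathcal L_{2,t}^\eta$. The bound \eqref{eq:perturbed-coupling-generator} then reduces to \eqref{eq:common-environment-generator-contraction} without using \eqref{eq:bounded-firing-rates-environment}. One can then either invoke Theorem~\ref{thm:conditional-wasserstein} directly or repeat its localization and Fatou argument. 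Either way the conclusion holds, and I expect no genuine obstacle.
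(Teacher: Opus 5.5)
Your proposal is correct and takes essentially the same approach as the paper, which gives no separate proof and treats the corollary as an immediate specialization of Theorem~\ref{thm:wasserstein-different-environments}: identical environments give $\theta_j^\eta(t)=\theta_j^{\widehat\eta}(t)$, hence $D_i\equiv 0$ and $\mathcal D\equiv 0$, so the convolution term vanishes. Your side remark is also sound, since the resulting bound is exactly the conclusion of Theorem~\ref{thm:conditional-wasserstein}, which does not require the bounded-rate assumption \eqref{eq:bounded-firing-rates-environment}.
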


\begin{corollary} 
\label{cor:uniform-environment-perturbation}
Suppose that
\begin{equation*}
\mathcal D(\eta_t,\widehat\eta_t)\leq\varepsilon,
\qquad 0\leq t\leq T,
\end{equation*}
for some $\varepsilon\geq0$. Then
\begin{align}
W_{1,H}\left(
\mu P_{0,t}^{\eta},
\nu P_{0,t}^{\widehat\eta}
\right)
\leq
e^{-dt}W_{1,H}(\mu,\nu)
+
\frac{\varepsilon}{d}
\left(
1-e^{-dt}
\right).
\label{eq:uniform-environment-perturbation}
\end{align}
In particular,
\begin{equation*}
\limsup_{t\to\infty}
W_{1,H}\left(
\mu P_{0,t}^{\eta},
\nu P_{0,t}^{\widehat\eta}
\right)
\leq
\frac{\varepsilon}{d},
\end{equation*}
whenever the two environments and the corresponding transition kernels are
defined for all $t\geq0$.
\end{corollary}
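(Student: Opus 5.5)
The corollary follows directly from Theorem~\ref{thm:wasserstein-different-environments}. Under the uniform bound $\mathcal D(\eta_t,\widehat\eta_t)\leq\varepsilon$ on $[0,T]$, condition \eqref{eq:integrable-environment-discrepancy} holds automatically, since $\int_0^T\mathcal D\,dt\leq\varepsilon T<\infty$. Hence \eqref{eq:wasserstein-different-environments} applies for every $0\leq t\leq T$.

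The first step is to insert the pointwise bound into the integral term. Since $e^{-d(t-u)}\geq0$,
\begin{equation*}
\int_0^t e^{-d(t-u)}\mathcal D(\eta_u,\widehat\eta_u)\,du
\leq
\varepsilon\int_0^t e^{-d(t-u)}\,du
=
\frac{\varepsilon}{d}\left(1-e^{-dt}\right).
\end{equation*}
The integral is computed by the substitution $v=t-u$. It uses $d>0$, which is guaranteed by \eqref{eq:general-contraction-rate}. This gives \eqref{eq:uniform-environment-perturbation}.

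For the asymptotic statement, I would assume the environments and kernels are defined on all of $\mathbb R_+$ with the bound $\mathcal D\leq\varepsilon$ for all $t\geq0$. The theorem then applies on $[0,T]$ for every $T$. The constant $d$ does not depend on $T$, because it involves only $k_1,a_i,c_i,q_i$. So \eqref{eq:uniform-environment-perturbation} holds for every $t\geq0$.

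Letting $t\to\infty$, the term $e^{-dt}W_{1,H}(\mu,\nu)$ tends to $0$, because $W_{1,H}(\mu,\nu)<\infty$ by the finite first $H$-moment assumption. The second term increases to $\varepsilon/d$. Taking $\limsup$ gives the bound.

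The only potential subtlety is the uniformity of the theorem's hypotheses in $T$. The bounded-rate assumption and the structural conditions are time-independent, so this should be unproblematic.
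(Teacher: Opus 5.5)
Your proposal is correct and matches the paper's proof: the paper bounds $\mathcal D$ by $\varepsilon$ inside the integral of Theorem~\ref{thm:wasserstein-different-environments} and uses $\int_0^t e^{-d(t-u)}\,du=(1-e^{-dt})/d$. Your two additional checks are sound, and the paper leaves both implicit. First, the integrability hypothesis holds automatically under the uniform bound. Second, $d$ does not depend on $T$, which is what justifies passing to the $\limsup$.
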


\begin{proof}
Use
\begin{equation*}
\int_0^t
e^{-d(t-u)}du
=
\frac{1-e^{-dt}}{d}
\end{equation*}
in \eqref{eq:wasserstein-different-environments}.
\end{proof}

\begin{remark} 
Theorem~\ref{thm:wasserstein-different-environments} separates the effect of the
initial membrane-potential distributions from the effect of the reinforcement
profiles. The first term in
\eqref{eq:wasserstein-different-environments} decays exponentially, while the
second term measures the accumulated discrepancy between the two
excitation--inhibition laws.

The theorem does not establish contraction of the complete endogenous process
$(X,N,S)$. Nevertheless, it shows that the conditional membrane-potential
dynamics depend continuously on the prescribed reinforcement environment.
\end{remark}

\begin{remark} 
Without assumption
\eqref{eq:bounded-firing-rates-environment}, the generator estimate
\eqref{eq:perturbed-coupling-generator} remains valid. In that case one obtains
a perturbation bound involving
\begin{equation*}
\mathbb E\left(
\sum_{i=1}^{\infty}
\min\left\{
\phi_i(X_t^i),
\phi_i(\widehat X_t^i)
\right\}
D_i(\eta_t,\widehat\eta_t)
\right).
\end{equation*}
Additional moment or summability estimates are then required to replace this
quantity by a deterministic function of the two environments.
\end{remark}

\section{Conditional Replica Mean Field equation under the Poisson Hypothesis}
\label{sec:rmf-elephant}
The counters $N_t^i$ are nondecreasing, so the full process
$(X_t,N_t,S_t)$ does not generally possess a stationary law in all
coordinates. We therefore freeze an admissible reinforcement environment
 $\eta=(n,s),
$ where
\begin{equation*}
n_i\in\mathbb N_0,
\qquad
|s_i|\leq n_i,
\qquad
s_i\equiv n_i\pmod 2.
\end{equation*}
In this frozen environment, we derive the stationary Replica Mean Field
equation for the conditional membrane-potential dynamics.

The result of this section is conditional on the Poisson Hypothesis and on
the existence of the corresponding stationary replica laws. In particular,
we do not prove here convergence of the $M$-replica system to a nonlinear
Replica Mean Field process, nor do we establish propagation of chaos.
Rather, assuming the asymptotic independence and convergence properties
specified below, we identify the equation that any limiting stationary
transform must satisfy.

Throughout this section assume
\[
g(x)=x,
\qquad
\phi_i(x)=x+z_i,
\qquad z_i\geq0,
\]
and allow a reset value $r_i\geq0$.  Thus, when neuron $i$ spikes, its own
potential becomes $r_i$.  The case used in the preceding sections is recovered
by taking $r_i=0$.

For each $M\geq2$, consider $M$ replicas of the network.  Write $X_i^m$ for
neuron $i$ in replica $m$.  When neuron $(j,n)$ spikes, then for every
$i\in\mathcal N_j$ it chooses uniformly one target replica
$m\in\{1,\ldots,M\}\setminus\{n\}$ and sends to $(i,m)$ an Elephant increment
whose sign has the frozen law $Q_i^\eta$.  Routing choices for different
postsynaptic neurons are independent.  This is the standard replica
construction: interactions are preserved, but they are redirected uniformly
toward copies in the other replicas.

Fix a neuron $i$ and a replica $m$. For each $M\geq2$, assume that the
corresponding frozen $M$-replica system admits an invariant probability
measure $\pi_M^\eta$. We also assume that $\pi_M^\eta$ has finite exponential
moments in a neighbourhood of the origin for all coordinates appearing in the
transform calculations below. The existence and uniqueness of $\pi_M^\eta$ are not established in the
present section. We assume that the invariant measure $\pi_M^\eta$ is exchangeable with respect
to permutations of the replica labels. In particular, for fixed neurons $i,j$
and a fixed target replica $m$, the joint law of $(X_j^n,X_i^m)$ is the same
for every $n\neq m$.

The analysis in this section is carried out under a set of assumptions different from those used in the global non-explosion results. In particular, we do not require here the summability condition
\begin{align*}
\sum_{i\geq1}\operatorname{Lip}(\phi_i)<\infty.
\end{align*}
Instead, following the finite-dependency approach used for infinite neuronal systems in the Replica Mean Field setting, we assume that for every fixed neuron $i\in\mathbb N$, its presynaptic set
\begin{align*}
  \mathcal P_i
:={j\in\mathbb N:\ i\in\mathcal N_j}
\end{align*}
is finite.
Thus, although the underlying network contains infinitely many neurons, the membrane-potential dynamics of any fixed neuron (i) depend directly on only finitely many presynaptic neurons. The stationary Replica Mean Field calculation below is therefore performed coordinatewise, for each fixed (i), and does not assert the existence of a stationary law for the complete infinite-dimensional reinforced process.

We freeze an admissible reinforcement environment
$
\eta=(n,s)
$ 
and consider the corresponding conditional membrane-potential dynamics. Throughout this section we take
\begin{align*}
g(x)=x,
\phi_i(x)=x+z_i,
\qquad z_i\geq0.
\end{align*}
These linear firing rates need not satisfy the global Lipschitz-summability assumption imposed in the non-explosion section; this is not required here because, for every fixed neuron (i), all RMF interaction terms involve only the finite set $\mathcal P_i$.

The reinforcement counters are nondecreasing, so stationarity is imposed only on the conditional membrane-potential dynamics in the frozen environment. The result obtained below is therefore a coordinatewise conditional stationary RMF identity rather than a stationary result for the full process ((X,N,S)).

 All statements below are therefore conditional on the
existence of such invariant probability measures satisfying the required
integrability properties.  Define
\[
\Lambda_{i}^{M,m,\eta}(u)
:=
\mathbb E_{\pi_M^\eta}
\left[e^{u(X_i^m+z_i)}\right]
\]
and
\[
\beta_j^M
:=
\mathbb E_{\pi_M^\eta}[X_j^n+z_j],
\]
which is independent of $n$ by replica symmetry.

For $v\geq0$, also define
\[
\Theta_i^{M,m,\eta}(u,v)
:=
\mathbb E_{\pi_M^\eta}
\left[
 e^{u((X_i^m-v)_++z_i)}
\right].
\]

We use the following Poisson Hypothesis.  For $m\neq n$, as $M\to\infty$, the
state of the presynaptic neuron $(j,n)$ becomes asymptotically independent of
the state of the target neuron $(i,m)$, and the empirical incoming activity
converges to a Poisson input of rate
\[
\beta_j
:=
\lim_{M\to\infty}\beta_j^M.
\]
For \(i\in\mathbb N\), define
\begin{equation*}
\Lambda_i^\eta(u)
:=
\mathbb E_\eta
\left[
e^{u(X_i+z_i)}
\right],
\end{equation*}
and, for \(v\geq0\),
\begin{equation*}
\Theta_i^\eta(u,v)
:=
\mathbb E_\eta
\left[
e^{u((X_i-v)_++z_i)}
\right].
\end{equation*}

\begin{lemma} 
\label{lemnew}
Let $I\subset\mathbb R$ be an open interval containing $0$. Fix
$i\in\mathbb N$ and a replica $m$. Assume that, for every $u\in I$,
\begin{align*}
\Lambda_i^{M,m,\eta}(u)
=
\mathbb E_{\pi_M^\eta}
\left[
e^{u(X_i^m+z_i)}
\right]
\longrightarrow
\Lambda_i^\eta(u)
=
\mathbb E_{\eta}
\left[
e^{u(X_i+z_i)}
\right].
\end{align*}

Assume moreover that the family of replica laws satisfies a uniform
exponential-moment condition on compact subsets of $I$. More precisely, for
every compact set $K\subset I$, there exists $\delta_K>0$ such that
$u+2\delta_K\in I$ for every $u\in K$ and
\begin{align*}
\sup_{M\geq2}
\sup_{u\in K}
\mathbb E_{\pi_M^\eta}
\left[
e^{(u+2\delta_K)(X_i^m+z_i)}
\right]
<\infty.
\end{align*}

Then, for every $u\in I$,
\begin{align*}
\bigl(\Lambda_i^{M,m,\eta}\bigr)'(u)
\longrightarrow
\bigl(\Lambda_i^\eta\bigr)'(u).
\end{align*}

In particular,
\begin{align*}
\mathbb E_{\pi_M^\eta}
\left[
(X_i^m+z_i)e^{u(X_i^m+z_i)}
\right]
\longrightarrow
\mathbb E_{\eta}
\left[
(X_i+z_i)e^{u(X_i+z_i)}
\right].
\end{align*}

\end{lemma}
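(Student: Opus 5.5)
Since $X_i^m+z_i\ge 0$, each $\Lambda_i^{M,m,\eta}$ is the Laplace-type transform of a nonnegative random variable. My plan is to derive the convergence of derivatives from pointwise convergence of the transforms together with the uniform exponential bound, by a uniform-integrability argument. (A convexity route, using that pointwise convergence of convex differentiable functions gives convergence of derivatives wherever the limit is differentiable, would also work; I would keep it as a fallback.)

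\textbf{Step 1: differentiation under the expectation.} Fix $u\in I$ and take the compact set $K=\{u\}$ with its $\delta_K=\delta>0$. Write $Y_M:=X_i^m+z_i$ under $\pi_M^\eta$ and $Y:=X_i+z_i$ under the limit law. The elementary bound $y e^{(u+h)y}\le \delta^{-1}e^{(u+2\delta)y}$ holds for $y\ge0$ and $|h|\le\delta$. Combined with the assumed moment bound, it lets us differentiate under the expectation, so that
$$(\Lambda_i^{M,m,\eta})'(u)=\mathbb E_{\pi_M^\eta}[Y_Me^{uY_M}].$$
This also shows that the derivative formula in the statement is the same as the displayed "in particular" claim.

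\textbf{Step 2: convergence in law.} Pointwise convergence of the transforms on an open interval containing $0$ gives $Y_M\Rightarrow Y$ by the standard continuity theorem for moment generating functions. Equivalently, take $u$ near $0$, normalise, and use the fact that mgf convergence on a neighbourhood of zero determines the limit. Fatou's lemma along a Skorokhod representation then gives
$$\mathbb E[e^{(u+2\delta)Y}]\le\sup_M\mathbb E[e^{(u+2\delta)Y_M}]<\infty,$$
so the limit transform is also finite and differentiable at $u$. The same argument as in Step 1 then identifies $(\Lambda_i^\eta)'(u)=\mathbb E[Ye^{uY}]$.

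\textbf{Step 3: passing to the limit.} By Step 2 and continuous mapping, $Y_Me^{uY_M}\Rightarrow Ye^{uY}$. This family is uniformly integrable, because
$$(ye^{uy})^{1+\epsilon}\le C\,e^{(u+2\delta)y}\quad\text{for small }\epsilon>0,$$
with the right-hand side uniformly bounded in expectation. Hence the expectations converge, which gives $(\Lambda_i^{M,m,\eta})'(u)\to(\Lambda_i^\eta)'(u)$.

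I expect the main obstacle to be the uniform-integrability bound when $u<0$, where $e^{uy}$ decays. In that case I would use $|ye^{uy}|\le C_\delta e^{(u+2\delta)y}$ on $y\ge0$, which holds in both sign cases by choosing $\delta$ small relative to $|u|$. If that fails, I would switch to the convexity argument.
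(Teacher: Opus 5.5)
Your proposal is correct and is essentially the paper's argument. Both proofs go the same way: convergence in law from convergence of the transforms on a neighbourhood of $0$, then uniform integrability of $Y_Me^{uY_M}$ from the $2\delta$ exponential margin, then convergence of expectations. The differences are minor: you obtain uniform integrability from an $L^{1+\epsilon}$ bound rather than the paper's direct tail estimate $\sup_M\mathbb E[Y_Me^{uY_M}\mathbf 1_{\{Y_M>A\}}]\le (e\delta)^{-1}e^{-\delta A}\sup_M\mathbb E[e^{(u+2\delta)Y_M}]$, and you explicitly justify differentiating under the expectation, which the paper only asserts.

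Your worry about $u<0$ is unfounded. There $\epsilon u-2\delta<0$ for every $\epsilon>0$, so your bound $(ye^{uy})^{1+\epsilon}\le Ce^{(u+2\delta)y}$ holds without change.
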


\begin{proof}

Set
\begin{align*}
Y_M:=X_i^m+z_i\ \text{and}\
Y:=X_i+z_i.
\end{align*}

By assumption,
\begin{align*}
\mathbb E_{\pi_M^\eta}
\left[
e^{uY_M}
\right]
\longrightarrow
\mathbb E_{\eta}
\left[
e^{uY}
\right],
\qquad
u\in I.
\end{align*}

Since the moment generating functions converge on an open interval containing
$0$ to the moment generating function of $Y$, it follows that
$Y_M\Longrightarrow Y.
$ Fix $u\in I$. Choose $\delta>0$ such that $u+2\delta\in I$ and
\begin{align*}
\sup_{M\geq2}
\mathbb E_{\pi_M^\eta}
\left[
e^{(u+2\delta)Y_M}
\right]
<\infty.
\end{align*}

Since $Y_M\geq0$, for every $A>0$,
\begin{align*}
Y_Me^{uY_M}\mathbf 1_{{Y_M>A}}
&=
Y_Me^{-\delta Y_M}
e^{(u+\delta)Y_M}
\mathbf 1_{{Y_M>A}}.
\end{align*}

Since
\begin{align*}
xe^{-\delta x}
\leq
\frac{1}{e\delta},
\qquad
x\geq0,
\end{align*}
we obtain
\begin{align*}
Y_Me^{uY_M}\mathbf 1_{{Y_M>A}}
&\leq
\frac{1}{e\delta}
e^{(u+\delta)Y_M}
\mathbf 1_{{Y_M>A}}.
\end{align*}

On the event ${Y_M>A}$,
\begin{align*}
e^{(u+\delta)Y_M}
=
e^{-\delta Y_M}
e^{(u+2\delta)Y_M}
\leq
e^{-\delta A}
e^{(u+2\delta)Y_M}.
\end{align*}

Consequently,
\begin{align*}
\mathbb E_{\pi_M^\eta}
\left[
Y_Me^{uY_M}\mathbf 1_{{Y_M>A}}
\right]
&\leq
\frac{e^{-\delta A}}{e\delta}
\mathbb E_{\pi_M^\eta}
\left[
e^{(u+2\delta)Y_M}
\right].
\end{align*}

Taking the supremum over $M$ gives
\begin{align*}
\sup_{M\geq2}
\mathbb E_{\pi_M^\eta}
\left[
Y_Me^{uY_M}\mathbf 1_{{Y_M>A}}
\right]
&\leq
\frac{e^{-\delta A}}{e\delta}
\sup_{M\geq2}
\mathbb E_{\pi_M^\eta}
\left[
e^{(u+2\delta)Y_M}
\right].
\end{align*}

Hence,
\begin{align*}
\lim_{A\to\infty}
\sup_{M\geq2}
\mathbb E_{\pi_M^\eta}
\left[
Y_Me^{uY_M}\mathbf 1_{{Y_M>A}}
\right]
=
0.
\end{align*}

Thus, the family
${Y_Me^{uY_M}:M\geq2}$
is uniformly integrable.

Since the map $y\mapsto ye^{uy}$ is continuous on $\mathbb R_+$, weak
convergence together with uniform integrability gives
\begin{align*}
\mathbb E_{\pi_M^\eta}
\left[
Y_Me^{uY_M}
\right]
\longrightarrow
\mathbb E_{\eta}
\left[
Ye^{uY}
\right].
\end{align*}

Moreover,
\begin{align*}
\bigl(\Lambda_i^{M,m,\eta}\bigr)'(u)=
\mathbb E_{\pi_M^\eta}
\left[
Y_Me^{uY_M}
\right],
\bigl(\Lambda_i^\eta\bigr)'(u)
=
\mathbb E_{\eta}
\left[
Ye^{uY}
\right].
\end{align*}

Therefore,
\begin{align*}
\bigl(\Lambda_i^{M,m,\eta}\bigr)'(u)
\longrightarrow
\bigl(\Lambda_i^\eta\bigr)'(u).
\end{align*}

\end{proof}
 
\begin{proposition} 
\label{prop:conditional-rmf-elephant}

Fix a time-independent reinforcement environment $\eta$. Assume that, for
every $M\geq 2$, the frozen $M$-replica process admits an invariant probability
measure $\pi_M^\eta$ with the exponential integrability required below.
Assume moreover that $\pi_M^\eta$ is exchangeable with respect to
permutations of the replica labels.

Assume further that the Poisson Hypothesis holds.

Let
\begin{align*}
\beta_i
&:=
\mathbb E_\eta[X_i+z_i].
\end{align*}

Assume moreover that, as $M\to\infty$,
\begin{align*}
\beta_i^M
&\longrightarrow
\beta_i,
\\
\Lambda_i^{M,m,\eta}(u)
&\longrightarrow
\Lambda_i^\eta(u),
\\
\Theta_i^{M,m,\eta}(u,v)
&\longrightarrow
\Theta_i^\eta(u,v),
\end{align*}
for $u$ in a common open interval $I$ containing $0$.

Assume in addition that, for every compact set $K\subset I$, there exists
$\delta_K>0$ such that $u+2\delta_K\in I$ for every $u\in K$ and
\begin{align*}
\sup_{M\geq 2}
\sup_{u\in K}
\mathbb E_{\pi_M^\eta}
\left[
e^{(u+2\delta_K)(X_i^m+z_i)}
\right]
&<\infty.
\end{align*}

Then, for every $u\in I$,
\begin{align}
\label{eqrmf}0
=&
-(1+a_i u)
\bigl(\Lambda_i^\eta\bigr)'(u)
+
a_i u z_i\Lambda_i^\eta(u)
+
\beta_i e^{u(r_i+z_i)}+
\\
\nonumber +
&\sum_{j\in\mathcal P_i}
\beta_j Q_i^\eta(+1)
\bigl(e^{u w_{ji}}-1\bigr)
\Lambda_i^\eta(u)
+
\sum_{j\in\mathcal P_i}
\beta_j Q_i^\eta(-1)
\left[
\Theta_i^\eta(u,v_{ji})
-
\Lambda_i^\eta(u)
\right].
\end{align}

Moreover,
\begin{align*}
\beta_i
&=
\bigl(\Lambda_i^\eta\bigr)'(0).
\end{align*}

If $r_i=0$, the contribution arising from the reset mechanism before
combination with the deterministic drift is
\begin{align*}
\beta_i e^{u z_i}
-
\bigl(\Lambda_i^\eta\bigr)'(u).
\end{align*}

\end{proposition}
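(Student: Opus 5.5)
The proof applies stationarity of $\pi_M^\eta$ to the exponential test function $f(x)=e^{u(x_i^m+z_i)}$ of the single coordinate $(i,m)$, computes its generator in the frozen $M$-replica system term by term, and then lets $M\to\infty$ using the Poisson Hypothesis and Lemma~\ref{lemnew}.

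\textbf{Step 1: stationarity.} Since $f$ depends on one coordinate, stationarity gives $\mathbb E_{\pi_M^\eta}[\mathcal L_M f]=0$. The exponential moment assumptions justify this, for instance by truncating $f$ and passing to the limit. The generator has three parts.

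\emph{Drift.} The flow $\dot x=-a_ix$ contributes $-a_i u\,x_i^m e^{u(x_i^m+z_i)}$. Writing $x_i^m=(x_i^m+z_i)-z_i$, its expectation is
\[
-a_iu\bigl(\Lambda_i^{M,m,\eta}\bigr)'(u)+a_iuz_i\Lambda_i^{M,m,\eta}(u).
\]

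\emph{Reset.} Neuron $(i,m)$ fires at rate $x_i^m+z_i$ and jumps to $r_i$. This contributes
\[
\mathbb E\bigl[(X_i^m+z_i)\bigl(e^{u(r_i+z_i)}-e^{u(X_i^m+z_i)}\bigr)\bigr]=\beta_i^M e^{u(r_i+z_i)}-\bigl(\Lambda_i^{M,m,\eta}\bigr)'(u).
\]
With $r_i=0$ this is exactly the stated reset contribution $\beta_ie^{uz_i}-(\Lambda_i^\eta)'(u)$ after the limit.

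\emph{Incoming spikes.} For each $j\in\mathcal P_i$ and $n\neq m$, neuron $(j,n)$ fires at rate $X_j^n+z_j$ and targets replica $m$ with probability $1/(M-1)$. The increment is $+w_{ji}$ with probability $Q_i^\eta(+1)$, and $x\mapsto(x-v_{ji})_+$ with probability $Q_i^\eta(-1)$. Summing over the $M-1$ replicas $n$ and using exchangeability, the $1/(M-1)$ factor cancels. The contribution for each $j$ is
\[
\mathbb E_{\pi_M^\eta}\Bigl[(X_j^n+z_j)\Bigl(Q_i^\eta(+1)\bigl(e^{uw_{ji}}-1\bigr)e^{u(X_i^m+z_i)}+Q_i^\eta(-1)\bigl(e^{u((X_i^m-v_{ji})_++z_i)}-e^{u(X_i^m+z_i)}\bigr)\Bigr)\Bigr]
\]
for a fixed $n\neq m$. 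Since $\mathcal P_i$ is finite, this sum is finite.

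\textbf{Step 2: limit $M\to\infty$.} The drift and reset terms converge by Lemma~\ref{lemnew} together with $\beta_i^M\to\beta_i$.

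\textbf{Step 3: the main obstacle.} The cross terms require the factorization
\[
\mathbb E\bigl[(X_j^n+z_j)F(X_i^m)\bigr]\to\beta_j\,\mathbb E_\eta[F(X_i)]
\]
for $F$ equal to $e^{u(\cdot+z_i)}$ or $e^{u((\cdot-v)_++z_i)}$. I take this as the content of the Poisson Hypothesis, namely asymptotic independence of $(j,n)$ and $(i,m)$. Converting independence in law into convergence of these expectations needs uniform integrability. I would obtain it by Hölder's inequality: the exponential moment bound with margin $2\delta_K$ controls the $F$ factor, and a uniform first-moment or exponential bound controls $X_j^n$. One then uses convergence of $\Lambda_i^{M,m,\eta}$ and $\Theta_i^{M,m,\eta}$ to identify the limits. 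This yields \eqref{eqrmf}, where the terms $-(\Lambda_i^\eta)'(u)-a_iu(\Lambda_i^\eta)'(u)$ combine into $-(1+a_iu)(\Lambda_i^\eta)'(u)$.

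\textbf{Step 4: the mean identity.} Finally, $\beta_i=(\Lambda_i^\eta)'(0)$ follows by differentiating the limiting transform at $u=0$, which gives $\mathbb E_\eta[X_i+z_i]$. Differentiation under the expectation is justified by the exponential moments on a neighbourhood of $0$.
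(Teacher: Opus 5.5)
Your proposal is correct and follows essentially the same route as the paper: stationarity of $\pi_M^\eta$ applied to $e^{u(x_i^m+z_i)}$, the same drift, reset and routed-input terms with exchangeability cancelling the factor $1/(M-1)$, Lemma~\ref{lemnew} for the derivative terms, the Poisson Hypothesis taken (as in the paper) to give the factorization of the cross terms, and differentiation at $u=0$. Your optional H\"older justification of that factorization would need a bound on $X_j^n$ in some $L^p$, $p>1$, uniform in $M$ (a first-moment bound does not suffice for $u>0$); this is not among the stated hypotheses, but the paper does not verify this step either.
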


\begin{proof}

For a fixed replica $m$, consider the test function
\begin{align*}
f_u(x)
:=
e^{u(x_i^m+z_i)}.
\end{align*}

Only the deterministic drift of $x_i^m$, the spikes of neuron $(i,m)$,
and incoming routed spikes affect $f_u$. The corresponding part of the
$M$-replica generator is
\begin{align*}
\mathcal G_M^\eta f_u(x)
={}&
-a_i x_i^m u f_u(x)
+
(x_i^m+z_i)
\left[
e^{u(r_i-x_i^m)}-1
\right]f_u(x)
\\
&+
\sum_{j\in\mathcal P_i}
\sum_{n\neq m}
\frac{x_j^n+z_j}{M-1}
Q_i^\eta(+1)
\bigl(e^{u w_{ji}}-1\bigr)f_u(x)
\\
&+
\sum_{j\in\mathcal P_i}
\sum_{n\neq m}
\frac{x_j^n+z_j}{M-1}
Q_i^\eta(-1)
\left[
e^{u((x_i^m-v_{ji})_++z_i)}
-
e^{u(x_i^m+z_i)}
\right].
\end{align*}

Since $\pi_M^\eta$ is invariant and $f_u$ belongs to the domain of the
extended generator,
\begin{align*}
\mathbb E_{\pi_M^\eta}
\left[
\mathcal G_M^\eta f_u
\right]
=
0.
\end{align*}

Define
\begin{align*}
\Lambda_i^{M,m,\eta}(u)
:=
\mathbb E_{\pi_M^\eta}
\left[
e^{u(X_i^m+z_i)}
\right].
\end{align*}

Differentiating under the expectation gives
\begin{align*}
\bigl(\Lambda_i^{M,m,\eta}\bigr)'(u)
=
\mathbb E_{\pi_M^\eta}
\left[
(X_i^m+z_i)e^{u(X_i^m+z_i)}
\right].
\end{align*}

The drift contribution is
\begin{align*}
-a_i u
\mathbb E_{\pi_M^\eta}
\left[
X_i^m e^{u(X_i^m+z_i)}
\right]
&=
-a_i u
\left[
\bigl(\Lambda_i^{M,m,\eta}\bigr)'(u)
-
z_i\Lambda_i^{M,m,\eta}(u)
\right].
\end{align*}

For the reset contribution, observe that
\begin{align*}
(x_i^m+z_i)
\left[
e^{u(r_i-x_i^m)}-1
\right]
e^{u(x_i^m+z_i)}
=
(x_i^m+z_i)
\left[
e^{u(r_i+z_i)}
-
e^{u(x_i^m+z_i)}
\right].
\end{align*}

Therefore,
\begin{align*}
\mathbb E_{\pi_M^\eta}
\left[
(X_i^m+z_i)
\left(
e^{u(r_i+z_i)}
-
e^{u(X_i^m+z_i)}
\right)
\right]
=
\beta_i^M e^{u(r_i+z_i)}
-
\bigl(\Lambda_i^{M,m,\eta}\bigr)'(u),
\end{align*}
where
\begin{align*}
\beta_i^M
:=
\mathbb E_{\pi_M^\eta}[X_i^m+z_i].
\end{align*}

Combining the deterministic drift and reset contributions gives
\begin{align*}
&-a_i u
\left[
\bigl(\Lambda_i^{M,m,\eta}\bigr)'(u)
-
z_i\Lambda_i^{M,m,\eta}(u)
\right]
+
\beta_i^M e^{u(r_i+z_i)}
-
\bigl(\Lambda_i^{M,m,\eta}\bigr)'(u)
\\
&\qquad=
-(1+a_i u)
\bigl(\Lambda_i^{M,m,\eta}\bigr)'(u)
+
a_i u z_i\Lambda_i^{M,m,\eta}(u)
+
\beta_i^M e^{u(r_i+z_i)}.
\end{align*}

By Lemma~\ref{lemnew},
\begin{align*}
\bigl(\Lambda_i^{M,m,\eta}\bigr)'(u)
\longrightarrow
\bigl(\Lambda_i^\eta\bigr)'(u),
\qquad
u\in I.
\end{align*}

Hence, together with the assumed convergence of
$\Lambda_i^{M,m,\eta}(u)$ and $\beta_i^M$, the deterministic drift and reset
contribution converges to
\begin{align*}
&-(1+a_i u)
\bigl(\Lambda_i^\eta\bigr)'(u)
+
a_i u z_i\Lambda_i^\eta(u)
+
\beta_i e^{u(r_i+z_i)}.
\end{align*}
For an excitatory incoming spike, the Poisson Hypothesis and asymptotic
independence of distinct replicas yield, for any $n\neq m$,
\begin{align*}
\mathbb E_{\pi_M^\eta}
\left[
(X_j^n+z_j)e^{u(X_i^m+z_i)}
\right]
&\longrightarrow
\beta_j\Lambda_i^\eta(u).
\end{align*}

By exchangeability of $\pi_M^\eta$ with respect to the replica labels, for
fixed $m$ the quantity
\begin{align*}
\mathbb E_{\pi_M^\eta}
\left[
(X_j^n+z_j)e^{u(X_i^m+z_i)}
\right]
\end{align*}
is independent of the choice of $n\neq m$. Hence, for any fixed $n_0\neq m$,
\begin{align*}
\frac{1}{M-1}
\sum_{n\neq m}
\mathbb E_{\pi_M^\eta}
\left[
(X_j^n+z_j)e^{u(X_i^m+z_i)}
\right]
=
\mathbb E_{\pi_M^\eta}
\left[
(X_j^{n_0}+z_j)e^{u(X_i^m+z_i)}
\right].
\end{align*}

Therefore,
\begin{align*}
\frac{1}{M-1}
\sum_{n\neq m}
\mathbb E_{\pi_M^\eta}
\left[
(X_j^n+z_j)e^{u(X_i^m+z_i)}
\right]
\longrightarrow
\beta_j\Lambda_i^\eta(u).
\end{align*}

Consequently, the excitatory incoming contribution converges to
\begin{align*}
\sum_{j\in\mathcal P_i}
\beta_j Q_i^\eta(+1)
\bigl(e^{u w_{ji}}-1\bigr)
\Lambda_i^\eta(u).
\end{align*}
Similarly, for any $n\neq m$,
\begin{align*}
\mathbb E_{\pi_M^\eta}
\left[
(X_j^n+z_j)
e^{u((X_i^m-v_{ji})_++z_i)}
\right]
&\longrightarrow
\beta_j\Theta_i^\eta(u,v_{ji}).
\end{align*}

By replica exchangeability, the expectation on the left-hand side is
independent of the choice of $n\neq m$. Hence, for any fixed $n_0\neq m$,
\begin{align*}
\frac{1}{M-1}
\sum_{n\neq m}
\mathbb E_{\pi_M^\eta}
\left[
(X_j^n+z_j)
e^{u((X_i^m-v_{ji})_++z_i)}
\right]
&=
\mathbb E_{\pi_M^\eta}
\left[
(X_j^{n_0}+z_j)
e^{u((X_i^m-v_{ji})_++z_i)}
\right]
\\
&\longrightarrow
\beta_j\Theta_i^\eta(u,v_{ji}).
\end{align*}

Likewise,
\begin{align*}
\frac{1}{M-1}
\sum_{n\neq m}
\mathbb E_{\pi_M^\eta}
\left[
(X_j^n+z_j)e^{u(X_i^m+z_i)}
\right]
\longrightarrow
\beta_j\Lambda_i^\eta(u).
\end{align*}

Therefore, the inhibitory incoming contribution converges to
\begin{align*}
\sum_{j\in\mathcal P_i}
\beta_j Q_i^\eta(-1)
\left[
\Theta_i^\eta(u,v_{ji})
-
\Lambda_i^\eta(u)
\right].
\end{align*}

Since $\mathcal P_i$ is finite, the limit may be taken term by term in the
sum over $j\in\mathcal P_i$. Combining the deterministic drift, reset, excitatory, and
inhibitory contributions therefore yields
(\ref{eqrmf}).

Finally, by definition,
\begin{align*}
\Lambda_i^\eta(u)
&=
\mathbb E_\eta
\left[
e^{u(X_i+z_i)}
\right].
\end{align*}

Since the limiting stationary law has finite exponential moments on the
interval $I$, differentiation under the expectation is justified for
$u\in I$. Hence,
\begin{align*}
\bigl(\Lambda_i^\eta\bigr)'(u)
&=
\mathbb E_\eta
\left[
(X_i+z_i)e^{u(X_i+z_i)}
\right].
\end{align*}

Evaluating at $u=0$ gives
\begin{align*}
\bigl(\Lambda_i^\eta\bigr)'(0)
=
\mathbb E_\eta
\left[
X_i+z_i
\right]
 =
\beta_i.
\end{align*}

\end{proof}

\begin{remark} 
\label{rem:rmf-invariant-measure}
The existence of the invariant measures $\pi_M^\eta$ is an assumption of
Proposition~\ref{prop:conditional-rmf-elephant}. The proposition does not
provide an existence or uniqueness theorem for stationary laws of the frozen
replica system. Its conclusion is an identity satisfied by any limiting
stationary transform whenever the invariant measures and the stated limiting
properties exist.
\end{remark}
\begin{remark} 
\label{rem:scope-rmf-result}
Proposition~\ref{prop:conditional-rmf-elephant} is a conditional identification
result. It assumes the existence of stationary replica laws, the Poisson
Hypothesis, and the convergence of the relevant transforms and mean firing
rates. Under these assumptions, it identifies the equation satisfied by the
limiting stationary transform.

No propagation-of-chaos theorem or convergence theorem for the full
$M$-replica process is proved in the present work. Thus,
\eqref{eqrmf} should be understood as the conditional
stationary RMF identity associated with the frozen reinforcement environment.
\end{remark}
\begin{remark} 
The equation is not closed in $\Lambda_i^\eta$ alone.  Indeed,
\[
\Theta_i^\eta(u,v)
=
 e^{uz_i}\mathbb P_\eta(X_i\leq v)
+
 e^{-uv}
\mathbb E_\eta
\left[
 e^{u(X_i+z_i)}\mathbf 1_{\{X_i>v\}}
\right].
\]
The additional term is forced by the boundary truncation
$x\mapsto(x-v)_+$ and contains information about the distribution below the
inhibitory threshold.
\end{remark}

\begin{remark}
The closed equation obtained by replacing $e^{uw_{ji}}$ with
$\Psi_{ji}^\eta(u)$ is therefore exact only for the additive model.  For the
original model on $\mathbb R_+$, the correct RMF equation is
\eqref{eqrmf}.
\end{remark}

\end{document}